\newtheorem{theorem}{Theorem}
\theoremstyle{plain}
\newtheorem{lemma}{Lemma}
\newtheorem{proposition}{Proposition}
\theoremstyle{definition}
\newtheorem{definition}{Definition}
\numberwithin{equation}{section}
\begin{document}
\title{Self-Similar Tilings of Fractal Blow-Ups}
\author[M. F. Barnsley]{M. F. Barnsley}
\address{Australian National University\\
Canberra, ACT, Australia }
\author{A. Vince}
\address{University of Florida\\
Gainesville, FL, USA}
\date{2017-06-14}

\begin{abstract}
New tilings of certain subsets of $\mathbb{R}^{M}$ are studied, tilings
associated with fractal blow-ups of certain similitude iterated function
systems (IFS). For each such IFS with attractor satisfying the open set
condition, our construction produces a usually infinite family of tilings that
satisfy the following properties: (1) the prototile set is finite; (2) the
tilings are repetitive (quasiperiodic); (3) each family contains self-similar
tilings, usually infinitely many; and (4) when the IFS is rigid in an
appropriate sense, the tiling has no non-trivial symmetry; in particular the
tiling is non-periodic.

\end{abstract}
\maketitle

\section{Introduction}

\label{sec:intro}

The subject of this paper is a new type of tiling of certain subsets $D$ of
$\mathbb{R}^{M}$. Such a domain $D$ is a fractal blow-up (as defined in
Section~\ref{defsec}) of certain similitude iterated function systems (IFSs);
see also \cite{manifold, strichartz}. For an important class of such tilings
it is the case that $D=\mathbb{R}^{M}$, as exemplified by the tiling of
Figure~\ref{fig:b} (on the right ) that is based on the \textquotedblleft
golden b" tile (on the left). We are also interested, however, in situations
where $D$ has non-integer Hausdorff dimension. The left panel in
Figure~\ref{sidebyside} shows the domain $D$, the right panel a tiling of $D$.
These examples are explored in Section~\ref{exsec}. In this work, tiles may be
fractals; pairs of distinct tiles in a tiling are required to be
non-overlapping, i.e., they intersect on a set whose Hausdorff dimension is
lower than that of the individual tiles. \begin{figure}[tbh]
\includegraphics[width=3cm, keepaspectratio]{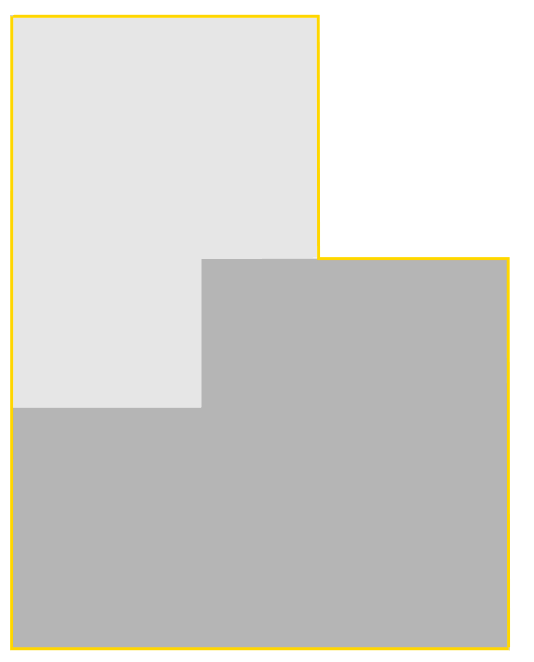} \hskip 10mm
\includegraphics[width=8cm, keepaspectratio]{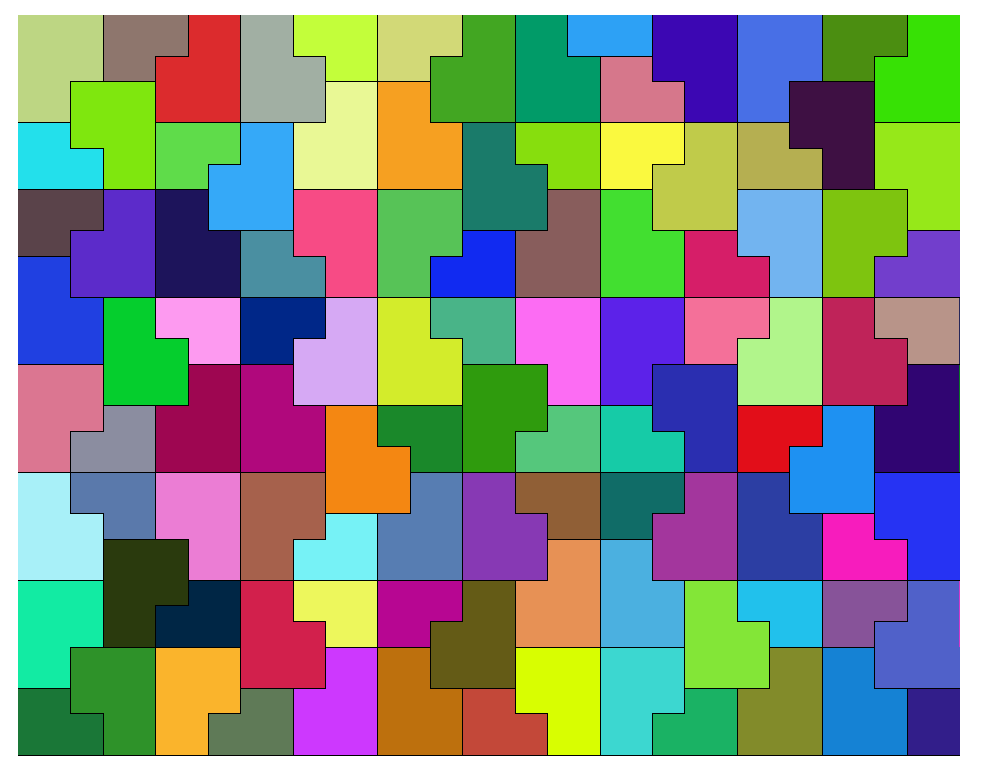} \caption{Golden b and
golden b tiling.}%
\label{fig:b}%
\end{figure}%

\begin{figure}[ptb]%
\centering
\includegraphics[
height=1.6475in,
width=5.2477in
]%
{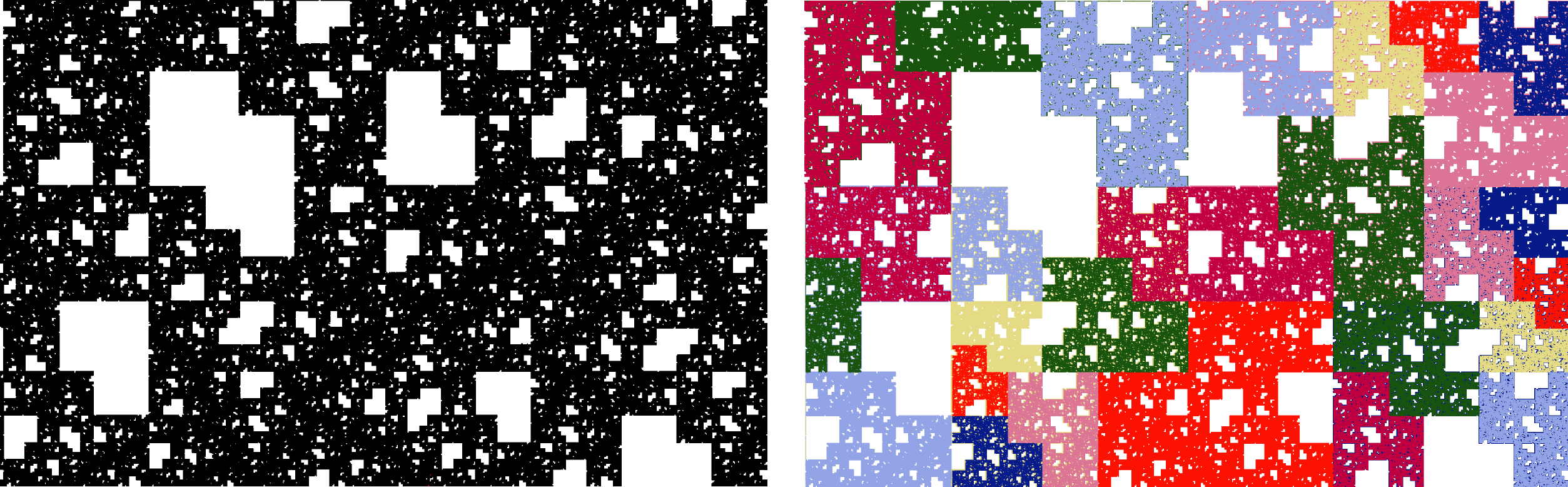}%
\caption{The left image shows part of an infinite fractal blow-up $D$; the
right image shows part of a tiling of $D$ using a finite set of prototiles.
See Section \ref{exsec}.}%
\label{sidebyside}%
\end{figure}

These tilings come in families, one family for each similitude IFS whose
functions $f_{1},f_{2}\dots,f_{N}$ have scaling ratios that are integer powers
$s^{a_{1}},s^{a_{2}},\dots,s^{a_{N}}$ of a single real number $s$ and whose
attractor is non-overlapping. Each such family contains, in general, an
uncountable number of tilings. Each family has a finite set of prototiles.

The paper is organized as follows. Sections \ref{tilingsec} and \ref{defsec}
provide background and definitions relevant to tilings and to iterated
function systems. The construction of our tilings is given in Section
\ref{defsec}. The main theorems are stated precisely in Section \ref{defsec}
and proved in subsequent sections. Results appear in Section \ref{realabsec}
that define and discuss the relative and absolute addresses of tiles. These
concepts, useful towards understanding the relationships between different
tilings, are illustrated in Section \ref{exsec}. Also in Section~\ref{exsec}
are examples of tilings of $\mathbb{R}^{2}$ and of a quadrant of
$\mathbb{R}^{2}$. The Ammann (the golden b) tilings and related fractal
tilings are also discussed in that section, as is a blow-up of a Cantor set.

A subset $P$ of a tiling $T$ is called a \textit{patch} of $T$ if it is
contained in a ball of finite radius. A tiling $T$ is \textit{quasiperiodic}
(also called repetitive) if, for any patch $P$, there is a number $R>0$ such
that any disk of radius $R$ centered at a point contained in a tile of $T$
contains an isometric copy of $P$. Two tilings are \textit{locally isomorphic}
if any patch in either tiling also appears in the other tiling. A tiling $T$
is \textit{self-similar} if there is a similitude $\psi$ such that $\psi(t)$
is a union of tiles in $T$ for all $t\in T$. Such a map $\psi$ is called a
\textit{self-similarity}.

Let $\mathcal{F}$ be a similitude IFS whose functions have scaling ratios
$s^{a_{1}},s^{a_{2}},\dots,s^{a_{N}}$ as defined above. Let $[N]^{\ast}$ be
the set of finite words over the alphabet $[N]:=\{1,2,\dots,N\}$ and
$[N]^{\infty}$ be the set of infinite words over the alphabet $[N]$. For a
fixed IFS $\mathcal{F}$, our results show that:

\begin{enumerate}
\item For each $\theta\in[N]^{*}$, our construction yields a bounded tiling,
and for each $\theta\in[N]^{\infty}$, our construction yields an unbounded
tiling. In the latter case, the tiling, denoted $\pi(\theta)$, almost always
covers ${\mathbb{R}}^{M}$ when the attractor of the IFS has nonempty interior.

\item The mapping $\theta\mapsto\pi(\theta)$ is continuous with respect to the
standard topologies on the domain and range of $\pi$.

\item Under quite general conditions, the mapping $\theta\mapsto\pi(\theta)$
is injective.

\item For each such tiling, the prototile set is $\{sA, s^{2}A,\dots,
s^{a_{\max}} A\}$, where $A$ is the attractor of the IFS and $a_{\max} =
\max\{a_{1}, a_{2}, \dots, a_{N}\}$.

\item The constructed tilings, in the unbounded case, are repetitive
(quasiperiodic) and any two such tilings are locally isomorphic.

\item For all $\theta\in[N]^{\infty}$, if $\theta$ is eventually periodic,
then $\pi(\theta)$ is self-similar.

\item If $\mathcal{F}$ is strongly rigid, then how isometric copies of a pair
bounded tilings can overlap is extremely restricted: if the two tilings are
such that their overlap is a subset of each, then one tiling must be contained
in the other.

\item If $\mathcal{F}$ is strongly rigid, then the constructed tilings have no
non-identity symmetry. In particular, they are non-periodic.
\end{enumerate}

The concept of a rigid and a strongly rigid IFS is discussed in
Sections~\ref{strongsec}.

A special case of our construction (polygonal tilings, no fractals) appears in
\cite{polygon}, in which we took a more recreational approach, devoid of
proofs. Other references to related material are \cite{anderson, sadun}. This
work extends, but is markedly different from \cite{tilings}.

\section{\label{tilingsec}Tilings, Similitudes and Tiling Spaces}

Given a natural number $M$, this paper is concerned with certain tilings of
strict subsets of Euclidean space $\mathbb{R}^{M}$ and of $\mathbb{R}^{M}$
itself. A \textit{tile} is a perfect (i.e. no isolated points) compact
nonempty subset of $\mathbb{R}^{M}$. Fix a Hausdorff dimension $0<D_{H}\leq
M$. A \textit{tiling} in $\mathbb{R}^{M}$ is a set of tiles, each of Hausdorff
dimension $D_{H}$, such that every distinct pair is non-overlapping. Two tiles
are \textit{non-overlapping} if their intersection is of Hausdorff dimension
strictly less than $D_{H}$. The \textit{support} of a tiling is the union of
its tiles. We say that a tiling tiles its support. Some examples are presented
in Section~\ref{exsec}.

A \textit{similitude} is an affine transformation $f:{\mathbb{R}}%
^{M}\rightarrow{\mathbb{R}}^{M}$ of the form $f(x)=s\,O(x)+q$, where $O$ is an
orthogonal transformation and $q\in\mathbb{R}^{M}$ is the translational part
of $f(x)$. The real number $s>0$, a measure of the expansion or contraction of
the similitude, is called its \textit{scaling} \textit{ratio}. An
\textit{isometry} is a similitude of unit scaling ratio and we say that two
sets are isometric if they are related by an isometry. We write $\mathcal{E}$
to denote the group of isometries on $\mathbb{R}^{M}$.

The \textit{prototile set} $\mathcal{P}$ of a tiling $T$ is a minimal set of
tiles such that every tile in $T$ is an isometric copy of a tile in
$\mathcal{P}$. The tilings constructed in this paper have a finite prototile set.

Given a tiling $T$ we define $\partial T$ to be the union of the set of
boundaries of all of the tiles in $T$ and we let $\rho:\mathbb{R}%
^{M}\rightarrow\mathbb{S}^{M}$ be the usual $M$-dimensional stereographic
projection to the $M$-sphere, obtained by positioning $\mathbb{S}^{M}$ tangent
to $\mathbb{R}^{M}$ at the origin. We define the distance between tilings $T$
and $T^{\prime}$ to be%
\[
d_{\tau}(T,T^{\prime})=h(\overline{\rho(\partial T)},\overline{\rho(\partial
T^{\prime})})
\]
where the bar denotes closure and $h$ is the Hausdorff distance with respect
to the round metric on $\mathbb{S}^{M}$. Let $\mathbb{K(R}^{M})$ be the set of
nonempty compact subsets of $\mathbb{R}^{M}$. It is well known that $d_{\tau}$
provides a metric on the space $\mathbb{K}({\mathbb{R}}^{M})$ and that
$(\mathbb{K}({\mathbb{R}}^{M}),d_{\tau})$ is a compact metric space.

This paper examines spaces consisting, for example, of $\pi(\theta)$ indexed
by $\theta\in\left[  N\right]  ^{\ast}$ with metric $d_{\tau}$. Although we
are aware of the large literature on tiling spaces, we do not explore the
larger spaces obtained by taking the closure of orbits of our tilings under
groups of isometries as in, for example, \cite{anderson, sadun}. We focus on
the relationship between the addressing structures associated with IFS theory
and the particular families of tilings constructed here.

\section{\label{defsec} Definition and Properties of IFS Tilings}

Let $\mathbb{N=\{}1,2,\cdots\}$ and $\mathbb{N}_{0}=\{0,1,2,\cdots\}$. For
$N\in\mathbb{N}$, let $[N]=\{1,2,\cdots,N\}$. Let $[N]^{\ast}=\cup
_{k\in\mathbb{N}_{0}}[N]^{k}$, where $[N]^{0}$ is the empty string, denoted
$\varnothing$.

See \cite{hutchinson} for formal background on iterated function systems
(IFSs). Here we are concerned with IFSs of a special form: let $\mathcal{F}%
=\{{\mathbb{R}}^{M};f_{1},f_{2},\cdots,f_{N}\}$, with $N\geq2$, be an IFS of
contractive similitudes where the scaling factor of $f_{n}$ is $s^{a_{n}}$
with $0<s<1$ where $a_{n}\in\mathbb{N}$. There is no loss of generality in
assuming that the greatest common divisor is one: $\gcd\{a_{1},a_{2}%
,\cdots,a_{N}\}=1$. That is, for $x\in{\mathbb{R}}^{M}$, the function
$f_{n}:\mathbb{R}^{M}\rightarrow\mathbb{R}^{M}$ is defined by
\[
f_{n}(x)=s^{a_{n}}O_{n}(x)+q_{n}%
\]
where $O_{n}$ is an orthogonal transformation and $q_{n}\in{\mathbb{R}}^{M}$.
It is convenient to define%
\[
a_{\max}=\max\{a_{i}:i=1,2,\dots,N\}.
\]

The \textit{attractor} $A$ of $\mathcal{F}$ is the unique solution in
$\mathbb{K(R}^{M})$ to the equation
\[
A=\bigcup\limits_{i\in\lbrack N]}f_{i}(A)\text{.}%
\]
It is assumed throughout that $A$ obeys the open set condition (OSC) with
respect to $\mathcal{F}$. As a consequence, the intersection of each pair of
distinct tiles in the tilings that we construct either have empty intersection
or intersect on a relatively small set. More precisely, the OSC implies that
the Hausdorff dimension of $A$ is strictly greater than the Hausdorff
dimension of the \textit{set of overlap} $\mathcal{O=\cup}_{i\neq j}%
f_{i}(A)\cap f_{j}(A)$. Similitudes applied to subsets of the set of overlap
comprise the sets of points at which tiles may meet. See \cite[p.481]{bandt}
for a discussion concerning measures of attractors compared to measures of the
set of overlap.


In what follows, the space $[N]^{\ast}\cup\lbrack N]^{\infty}$ is equipped
with a metric $d_{[N]^{\ast}\cup\lbrack N]^{\infty}}$ such that it becomes
compact. First, define the \textquotedblleft length" $\left\vert
\theta\right\vert $ of $\theta\in\lbrack N]^{\ast}\cup\lbrack N]^{\infty}$ as
follows. For $\theta=\theta_{1}\theta_{2}\cdots\theta_{k}\in\lbrack N]^{\ast}$
define $\left\vert \theta\right\vert =k$, and for $\theta\in\lbrack
N]^{\infty}$ define $\left\vert \theta\right\vert =\infty$. Now define
$d_{[N]^{\ast}\cup\lbrack N]^{\infty}}(\theta,\omega)=0$ if $\theta=\omega,$
and
\[
d_{[N]^{\ast}\cup\lbrack N]^{\infty}}(\theta,\omega)=2^{-\mathcal{N(}%
\theta,\omega)}%
\]
if $\theta\neq\omega$, where $\mathcal{N(}\theta,\omega)$ is the index of the
first disagreement between $\theta$ and $\omega$ (and $\theta$ and $\omega$
are understood to disagree at index $k$ if either $|\theta|<k$ or $|\omega|<k$
). It is routine to prove that $([N]^{\ast}\cup\lbrack N]^{\infty
},d_{[N]^{\ast}\cup\lbrack N]^{\infty}})$ is a compact metric space.

A point $\theta\in\lbrack N]^{\infty}$ is \textit{eventually periodic} if
there exists $m\in\mathbb{N}_{0}$ and $n\in\mathbb{N}$ such that $\theta
_{m+i}=\theta_{m+n+i}$ for all $i\geq1$. In this case we write $\theta
=\theta_{1}\theta_{2}\cdots\theta_{m}\overline{\theta_{m+1}\theta_{m+2}%
\cdots\theta_{m+n}}$.

For $\theta=\theta_{1}\theta_{2}\cdots\theta_{k}\in\lbrack N]^{\ast}$, the
following simplifying notation will be used:
\[
\begin{aligned} f_{\theta} &= f_{\theta_{1}} f_{\theta_{2}}\cdots f_{\theta_k} \\
f_{-\theta} &=f_{\theta_{1}}^{-1}f_{\theta_{2}}^{-1}\cdots f_{\theta_k}^{-1}=(f_{\theta_k\theta_{k-1}\cdots\theta_{1}})^{-1},
\end{aligned}
\]
with the convention that $f_{\theta}$ and $f_{-\theta}$ are the identity
function $id$ if $\theta=\varnothing$. Likewise, for all $\theta\in\lbrack
N]^{\infty}$ and $k\in\mathbb{N}_{0}$ define $\theta|k=\theta_{1}\theta
_{2}\cdots\theta_{k}$, and
\[
f_{-\theta|k}=f_{\theta_{1}}^{-1}f_{\theta_{2}}^{-1}\cdots f_{\theta_{k}}%
^{-1}=(f_{\theta_{k}\theta_{k-1}\cdots\theta_{1}})^{-1},
\]
with the convention that $f_{-\theta|0}=id$.

For $\sigma=\sigma_{1}\sigma_{2}\cdots\sigma_{k}\in\lbrack N]^{\ast}$ and with
$\left\{  a_{1},\dots,a_{N}\right\}  $ the scaling powers defined above, let
\[
e(\sigma)=a_{\sigma_{1}}+a_{\sigma_{2}}+\cdots+a_{\sigma_{k}}\qquad
\text{and}\qquad e^{-}(\sigma)=a_{\sigma_{1}}+a_{\sigma_{2}}+\cdots
+a_{\sigma_{k-1}},
\]
with the conventions $e(\varnothing)=e^{-}(\varnothing)=0$. Let
\[
\Omega_{k}:=\{\sigma\in\lbrack N]^{\ast}:e(\sigma)>k\geq e^{-}(\sigma)\}
\]
for all $k\in\mathbb{N}_{0}$, and note that $\Omega_{0}=[N]$. We also write,
in some places, $\sigma^{-}=\sigma_{1}\sigma_{2}\cdots\sigma_{k-1}$ so that
\[
e^{-}(\sigma)=e(\sigma^{-}).
\]

\begin{definition}
\label{defONE} A mapping $\mathbb{\pi}$ from $[N]^{\ast}\cup\lbrack
N]^{\infty}$ to collections of subsets of $\mathbb{R}^{M}$ is defined as
follows. For $\theta\in\lbrack N]^{\ast}$
\[
\mathbb{\pi}(\theta):=\{f_{_{-\theta}}f_{\sigma}(A):\sigma\in\Omega
_{e(\theta)}\},
\]
and for $\theta\in\lbrack N]^{\infty}$
\[
\mathbb{\pi}(\theta):=\bigcup\limits_{k\in\mathbb{N}_{0}} \pi(\theta|k).
\]


Let $\mathbb{T}$ be the image of $\pi$, i.e.
\[
\mathbb{T=\{\pi}(\theta):\theta\in\lbrack N]^{\ast}\cup\lbrack N]^{\infty}\}.
\]

\end{definition}

It is consequence of Theorem~\ref{theoremONE}, stated below, that the elements
of $\mathbb{T}$ are tilings. We refer to $\mathbb{\pi}(\theta)$ as an
\textit{IFS tiling}, but usually drop the term \textquotedblleft IFS". It is a
consequence of the proof of Theorem \ref{theoremONE}, given in Section
\ref{ProofofONE}, that the support of $\mathbb{\pi}(\theta)$ is what is
sometimes referred to as a \textit{fractal blow-up} \cite{manifold,
strichartz}. More exactly, if $F_{k}:=f_{_{-\theta|k}}(A)$, then
\[
\text{support}\,(\mathbb{\pi}(\theta))=\bigcup\limits_{k\in\mathbb{N}_{0}%
}F_{k}.
\]
Thus the support of $\mathbb{\pi}(\theta)$ is the limit of an increasing union
of sets $F_{0}\subseteq F_{1}\subseteq F_{2}\subseteq\cdots$, each similar to
$A$.

The theorems of this paper are summarized in the rest of this section. The
first two theorems, as well as a proposition in Section \ref{realabsec},
reveal general information about the tilings in $\mathbb{T}$ without the
rigidity condition that is assumed in the second two theorems. The proof of
the following theorem appears in Section~\ref{ProofofONE}.

\begin{theorem}
\label{theoremONE} Each set $\pi(\theta)$ in $\mathbb{T}$ is a tiling of a
subset of $\mathbb{R}^{M}$, the subset being bounded when $\theta\in[N]^{*}$
and unbounded when $\theta\in\lbrack N]^{\infty}$. For all $\theta\in\lbrack
N]^{\infty}$ the sequence of tilings $\left\{  \pi(\theta|k)\right\}
_{k=0}^{\infty}$ is nested according to%
\begin{equation}
\{f_{i}(A):i\in\lbrack N]\}=\pi(\varnothing)\subset\pi(\theta|1)\subset
\pi(\theta|2)\subset\pi(\theta|3)\subset\cdots\text{ .} \label{eqthmONE}%
\end{equation}
For all $\theta\in\lbrack N]^{\infty}$, the prototile set for $\mathbb{\pi
}(\theta)$ is $\{s^{i}A:i=1,2,\cdots,a_{\max}\}$.\textit{ }Furthermore
\[
\pi:[N]^{\ast}\cup\lbrack N]^{\infty}\rightarrow\mathbb{T}%
\]
is a continuous map from the compact metric space $[N]^{\ast}\cup\lbrack
N]^{\infty}$ into the space $(\mathbb{K}({\mathbb{R}}^{M}),d_{\tau})$.
\end{theorem}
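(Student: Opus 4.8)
The plan is to verify the four assertions in turn: (i) each $\pi(\theta)$ is a genuine tiling with bounded/unbounded support as stated; (ii) the nesting~\eqref{eqthmONE}; (iii) the prototile set; and (iv) continuity of $\pi$. The key structural fact underlying everything is that the index set $\Omega_{k}$ gives, for each $k$, a partition of the ``next generation'' of the address tree: if $\sigma\in\Omega_{k}$ then either $\sigma$ itself sits in $\Omega_{k+1}$ (when $e(\sigma)>k+1$), or else $\sigma j\in\Omega_{k+1}$ for every $j\in[N]$ (when $e(\sigma)=k+1$), and these cases are mutually exclusive. Consequently $\{f_{\sigma}(A):\sigma\in\Omega_{k}\}$ refines to $\{f_{\sigma}(A):\sigma\in\Omega_{k+1}\}$ by replacing some top-level pieces $f_{\sigma}(A)$ with $\bigcup_{j}f_{\sigma j}(A)=f_{\sigma}(\bigcup_j f_j(A))=f_{\sigma}(A)$; that is, the union over $\Omega_k$ of $f_\sigma(A)$ equals $A$ for every $k$. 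Applying $f_{-\theta|k}$ (a similitude) then shows $\pi(\theta|k)$ tiles $f_{-\theta|k}(A)=F_k$, and the refinement statement becomes $\pi(\theta|k)\subset\pi(\theta|k+1)$ in the sense of set-inclusion of tilings, which is~\eqref{eqthmONE}; for $\theta\in[N]^\infty$ the support of $\pi(\theta)=\bigcup_k\pi(\theta|k)$ is $\bigcup_k F_k$, unbounded because $f_{-\theta|k}$ has scaling ratio $s^{-e(\theta|k)}\to\infty$ as $k\to\infty$ (using $\gcd\{a_n\}=1$ and $N\ge 2$ so that $e(\theta|k)\to\infty$). For $\theta\in[N]^*$, $\Omega_{e(\theta)}$ is a finite set, so $\pi(\theta)$ is finite and its support $F_{|\theta|}$ is bounded.

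To see that these collections are tilings in the paper's sense, I must check that distinct tiles are non-overlapping, i.e. meet in a set of strictly smaller Hausdorff dimension. For two tiles $f_{-\theta|k}f_{\sigma}(A)$ and $f_{-\theta|k}f_{\tau}(A)$ with $\sigma,\tau\in\Omega_{e(\theta|k)}$ distinct, applying the similitude $f_{\theta|k}$ reduces this to showing $f_\sigma(A)\cap f_\tau(A)$ has dimension less than $\dim_H A$. Since $\sigma\ne\tau$ both lie in $\Omega_{e(\theta|k)}$, neither is a prefix of the other (a proper prefix $\sigma$ of $\tau$ would have $e(\sigma)\le e^-(\tau)\le e(\theta|k)$, contradicting $e(\sigma)>e(\theta|k)$); so they first differ at some coordinate, $\sigma=\omega i\cdots$, $\tau=\omega j\cdots$ with $i\ne j$, and then $f_\sigma(A)\cap f_\tau(A)\subseteq f_\omega(f_i(A)\cap f_j(A))$, which the OSC-consequence quoted in the text (dimension of $\mathcal O=\bigcup_{i\ne j}f_i(A)\cap f_j(A)$ is strictly below $\dim_H A$) controls, since $f_\omega$ is a similitude and hence preserves Hausdorff dimension. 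One also needs each tile to be a tile in the technical sense (perfect compact nonempty of the right dimension): $A$ is compact nonempty, it is perfect under OSC (it has no isolated points since $N\ge 2$ and each $f_i(A)\subsetneq A$ forces accumulation), and similitudes preserve all of this and the Hausdorff dimension.

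The prototile claim is immediate from the definition once one notes $f_{-\theta|k}f_{\sigma}(A)$ is, up to isometry, the set $s^{e(\sigma)-e(\theta|k)}A$, and for $\sigma\in\Omega_{e(\theta|k)}$ we have $e(\theta|k)<e(\sigma)\le e(\theta|k)+a_{\max}$ (since $e(\sigma)=e^-(\sigma)+a_{\sigma_{|\sigma|}}\le e(\theta|k)+a_{\max}$), so the exponent $e(\sigma)-e(\theta|k)$ ranges exactly over $\{1,2,\dots,a_{\max}\}$; that all $a_{\max}$ values actually occur follows from $N\ge2$, $\gcd=1$. Finally, for continuity of $\pi$: given $\theta,\omega$ agreeing to depth $n$, I want $d_\tau(\pi(\theta),\pi(\omega))$ small. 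If $\theta,\omega$ agree on the first $n$ symbols then $f_{-\theta|n}=f_{-\omega|n}$ and, by the nesting, $\pi(\theta)$ and $\pi(\omega)$ share the sub-tiling $\pi(\theta|n)$ covering $F_n$; all tiles of either tiling outside this common core lie outside $F_n$, hence at Euclidean distance $\ge c\,s^{-n}$ (roughly the inradius of $F_n$) from a fixed basepoint, so after stereographic projection $\rho$ their boundaries are within $O(s^{n})$ of each other in $\mathbb S^M$ — more precisely $\overline{\rho(\partial\pi(\theta))}$ and $\overline{\rho(\partial\pi(\omega))}$ agree on the image of $F_n$ and both lie in a shrinking neighborhood of the north pole outside it. This gives $d_\tau(\pi(\theta),\pi(\omega))\to 0$ as $\mathcal N(\theta,\omega)\to\infty$; combined with the fact that $[N]^*\cup[N]^\infty$ is compact and $(\mathbb K(\mathbb R^M),d_\tau)$ is a compact metric space, $\pi$ is continuous. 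The main obstacle I anticipate is the last point: making the metric estimate for $d_\tau$ genuinely uniform — i.e., controlling that ``$\pi(\theta)$ has no tiles near the basepoint outside $F_n$'' and that the closures behave well — because $d_\tau$ is defined via closures of stereographic images of boundaries, and one must be careful that the boundary sets accumulate only at the north pole (the point at infinity) and nowhere spuriously; this is where the increasing-union structure $F_0\subseteq F_1\subseteq\cdots$ with controlled inradii does the real work.
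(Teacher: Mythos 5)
Your overall architecture matches the paper's (nesting via the combinatorics of $\Omega_{k}$, prototiles via the scaling exponents $e(\sigma)-e(\theta|k)$, continuity via ``agreement to depth $K$ implies the tilings differ only far away''), and your non-overlap argument via a common prefix and the OSC is fine. But two steps are not justified as written, and the second is a genuine gap. First, a smaller point: the nesting $\pi(\theta|k)\subset\pi(\theta|k+1)$ does not follow from the refinement of $\{f_{\sigma}(A):\sigma\in\Omega_{m}\}$ into $\{f_{\sigma}(A):\sigma\in\Omega_{m+1}\}$ (that refinement appends letters and concerns tilings of the same copy of $A$, and in fact $\pi(\theta|k+1)$ does not refine $\pi(\theta|k)$ --- it extends it). What you need is the prepending relation $i\,\Omega_{m}=\Omega_{m+a_{i}}\cap i[N]^{\ast}$ (Lemma \ref{lemma1}): for $\tau\in\Omega_{e(\theta|k)}$ one has $\theta_{k+1}\tau\in\Omega_{e(\theta|k+1)}$ and $f_{-\theta|k}f_{\tau}=f_{-\theta|(k+1)}f_{\theta_{k+1}\tau}$, which is exactly how the paper (via Lemma \ref{lemma02} and Equation (\ref{Tkformula})) gets Equation (\ref{eqthmONE}). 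This is easy to repair, but as stated it is a non sequitur.

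The serious gap is in the continuity argument. Your key estimate is that every tile outside the common core $\pi(\theta|n)$ lies outside $F_{n}$ and is therefore at Euclidean distance at least $c\,s^{-n}$ (``roughly the inradius of $F_{n}$'') from a fixed basepoint. That claim is false in precisely the situations this paper is about: when $A$ has empty interior (the Cantor-set blow-ups and the fractal examples of dimension strictly between $1$ and $2$), $F_{n}$ has empty interior and no inradius at all, so ``outside $F_{n}$'' says nothing about distance from the basepoint; and even in the golden-b case with $\theta=\overline{12}$ the support of $\pi(\theta)$ is a quadrant, so a basepoint near the corner stays at bounded distance from the complement of every $F_{n}$. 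What the argument actually requires --- and what the paper's proof invokes --- is the stabilization statement: for each fixed ball $\overline{B(R)}$ there is $K$ such that the part of the tiling meeting $\overline{B(R)}$ is the same for $\pi(\theta|k)$, $k\geq K$, and indeed for every $\pi(\omega)$ with $\omega|K=\theta|K$; equivalently, only finitely many tiles meet $\overline{B(R)}$ and all tiles added after stage $K$, namely those supported in $f_{-\theta|K}\bigl(f_{\omega_{K+1}}^{-1}\cdots f_{\omega_{k}}^{-1}(A)\setminus A\bigr)$, miss $\overline{B(R)}$. This is a statement about where the blow-up grows (and needs an argument, e.g. positivity and uniform boundedness of $\mathcal{H}^{D_{H}}$-measure of tiles inside a fixed ball, or a direct geometric estimate on $f_{i}^{-1}(A)\setminus A$), not about an inradius of $F_{n}$ about a fixed point. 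You flag this uniformity issue yourself as the anticipated obstacle, but the mechanism you propose to resolve it (``controlled inradii'') is exactly what fails, so the continuity part of the theorem remains unproven in your outline.
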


The proof of the following theorem is given in Section \ref{proofofTHREE}.

\begin{theorem}
\label{theoremTHREE}

\begin{enumerate}
\item Each tiling in $\mathbb{T}$ is quasiperiodic and each pair of such
tilings in $\mathbb{T}$ are locally isomorphic.

\item If $\theta$ is eventually periodic, then $\pi(\theta)$ is self-similar.
In fact, if $\theta=\alpha\overline{\beta}$ for some $\alpha,\beta\in\left[
N\right]  ^{\ast}$ then $f_{-\alpha}f_{-\beta}\left(  f_{-\alpha}\right)
^{-1}$ is a self-similarity of $\pi(\theta)$.
\end{enumerate}
\end{theorem}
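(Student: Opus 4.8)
The plan is to prove the two parts of Theorem~\ref{theoremTHREE} separately, using in both cases the nested structure \eqref{eqthmONE} from Theorem~\ref{theoremONE} and the fact that each $\pi(\theta)$ is built from similitude images of the single attractor $A$.

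For part (1), the key observation is that for any $\theta$ the tiling $\pi(\theta)$ is a union of the nested bounded tilings $\pi(\theta|k)$, and each $\pi(\theta|k)$ is (by Definition~\ref{defONE}) the image under the single similitude $f_{-\theta|k}$ of the ``standard'' bounded patch $\{f_\sigma(A):\sigma\in\Omega_{e(\theta|k)}\}$, hence the image under a similitude of a piece of the ``reference'' tiling $\pi(\varnothing)$ subdivided to some uniform depth. First I would show that any patch $P$ of $\pi(\theta)$ lies inside some $\pi(\theta|k)$, and that $f_{\theta|k}(P)$ is then a patch of a tiling of the form $\{f_\sigma(A):\sigma\in\Omega_m\}$ for $m=e(\theta|k)$; since these ``level-$m$ subdivisions of $A$'' form a directed family, any such patch appears, up to the similitude $f_{-\tau}$ applied to an appropriate cylinder, inside $\pi(\tau)$ for every sufficiently long $\tau$, in particular inside $\pi(\omega|j)\subset\pi(\omega)$ for any other $\omega$ and large $j$. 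Combined with the self-similar refinement that lets one rescale by powers of $s$ to match prototile sizes (the prototile set being $\{s^iA\}$ in every tiling, independent of $\theta$), this gives that every patch of $\pi(\theta)$ occurs, up to isometry, in $\pi(\omega)$ — local isomorphism — and, taking $\omega=\theta$, occurs within bounded distance everywhere, which is quasiperiodicity. The quantitative repetitivity radius $R$ comes from the fact that one only needs to go finitely many levels up in \eqref{eqthmONE} to capture a copy of a given finite patch, and the expansion ratios are powers of the fixed $s$.

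For part (2), suppose $\theta=\alpha\overline{\beta}$. Set $\psi=f_{-\alpha}\,f_{-\beta}\,(f_{-\alpha})^{-1}=f_{-\alpha}\,f_{-\beta}\,f_{\alpha}$, a composition of similitudes, hence a similitude, with scaling ratio $s^{-e(\beta)}>1$. The claim is that $\psi$ maps each tile of $\pi(\theta)$ onto a union of tiles of $\pi(\theta)$. A tile of $\pi(\theta)$ has the form $t=f_{-\theta|k}\,f_\sigma(A)$ for suitable $k$ and $\sigma\in\Omega_{e(\theta|k)}$; choosing $k\ge|\alpha|$ and writing $\theta|k=\alpha\beta\beta\cdots\beta\,(\beta|r)$ using eventual periodicity, I would check that $\psi\bigl(f_{-\theta|k}\,f_\sigma(A)\bigr)=f_{-\theta|k'}\,f_\sigma(A)$ for the shorter prefix $\theta|k'$ obtained by deleting one copy of $\beta$ — this is the algebraic heart of the argument and follows because $f_{-\alpha}f_{-\beta}f_\alpha\cdot f_{-\alpha}f_{-\beta\cdots}=f_{-\alpha}f_{-\beta}f_{-\beta\cdots}=f_{-\alpha\beta\beta\cdots}$ telescopes correctly once $k$ is large enough that $\theta|k$ begins with $\alpha\beta$. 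Thus $\psi(t)$ is a single tile of $\pi(\theta|k')$, which is a union of (one or more) tiles of $\pi(\theta|k)\subset\pi(\theta)$ by the nesting \eqref{eqthmONE}. Running this over all tiles $t$ shows $\psi(t)$ is always a union of tiles of $\pi(\theta)$, i.e. $\psi$ is a self-similarity; since its ratio exceeds $1$, $\pi(\theta)$ is self-similar.

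The main obstacle I anticipate is the bookkeeping in part (2): one must be careful about which tiles $t=f_{-\theta|k}f_\sigma(A)$ one uses (the representation of a tile of $\pi(\theta)$ as such an image is not unique, since a tile of $\pi(\theta|k)$ reappears in $\pi(\theta|k+1)$ possibly subdivided), and one must verify that for $k$ large enough $\psi$ genuinely sends the level-$k$ generating data to level-$k'$ generating data with $e(\theta|k')\le e(\theta|k)$ so that the image is a union of already-present tiles rather than a coarsening that is \emph{not} in the tiling. Handling the cases $e(\beta)$ large versus the ``fractional'' overhang encoded in $\Omega_{e(\theta|k)}$ — i.e. making sure $\psi$ respects the sets $\Omega_m$ — is where the conventions around $e^-$ and $\Omega_k$ must be used precisely; once that compatibility is nailed down, both parts are essentially formal consequences of Theorem~\ref{theoremONE}.
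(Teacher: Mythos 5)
In part (1) the crucial quantitative step is missing. You correctly reduce a patch $P$ of $\pi(\theta)$ to a patch of a level-$m$ subdivision of $A$, but the passage from ``an isometric copy of $P$ occurs somewhere in every sufficiently deep subdivision'' to ``a copy of $P$ occurs within a uniform distance $R$ of every point of the unbounded tiling'' is exactly what quasiperiodicity demands, and your justification --- going finitely many levels up in the nested chain \eqref{eqthmONE} --- cannot deliver it: that chain is anchored at the seed copy of $A$, so climbing it only enlarges one bounded region around the base and says nothing about tiles far from the base. The paper's proof instead works with the canonical tilings $T_k$ and the recursion \eqref{Tkformula2} of Lemma \ref{lemma02}: first, an isometric copy of $T_{K_2}$ (hence of $P$) sits inside every $T_k$ with $k\ge K_3$, and producing a copy at the \emph{same scale} is precisely where $\gcd\{a_1,\dots,a_N\}=1$ enters, a hypothesis your sketch never uses (``rescale by powers of $s$ to match prototile sizes'' is the assertion, not an argument); second, for $k\ge K_4=K_3+a_{\max}$ every $T_k$ is an isometric combination of copies of $T_{K_3+1},\dots,T_{K_3+a_{\max}}$, blocks of uniformly bounded diameter each containing a copy of $P$, so every point of the support of every tiling in $\mathbb{T}$ lies in such a block. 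That block decomposition is what produces one radius $R$ valid simultaneously for all tilings, giving both quasiperiodicity and local isomorphism; without it your argument only shows that $P$ occurs somewhere in each $\pi(\omega)$, which is not repetitivity.

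In part (2) your skeleton is sound and close in spirit to the paper's refinement argument, but the algebra has two slips. First, $(f_{-\alpha})^{-1}\neq f_{\alpha}$: with the paper's convention $f_{-\alpha}=f_{\alpha_1}^{-1}\cdots f_{\alpha_l}^{-1}$, the inverse is the composition in reversed word order, so the telescoping must use $(f_{-\alpha})^{-1}f_{-\alpha}=id$ together with the concatenation rule $f_{-(\sigma\tau)}=f_{-\sigma}f_{-\tau}$, not ``$f_{\alpha}f_{-\alpha}=id$'', which is false in general. Second, the direction of your key identity is reversed: writing $\theta|k=\alpha\beta^{j}(\beta|r)$ and $m=|\beta|$, one gets $\psi f_{-\theta|k}=f_{-\alpha}f_{-\beta}^{j+1}f_{-(\beta|r)}=f_{-\theta|(k+m)}$, so $\psi$ \emph{appends} a copy of $\beta$ rather than deleting one; deleting is impossible, since it would make $\psi(t)$ strictly smaller than $t$, contradicting the expansion ratio $s^{-e(\beta)}>1$ that you yourself computed. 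With the corrected identity the conclusion is immediate: $\psi\bigl(f_{-\theta|k}f_{\sigma}(A)\bigr)=f_{-\theta|(k+m)}f_{\sigma}(A)$ with $\sigma\in\Omega_{e(\theta|k)}$, and since the partition indexed by $\Omega_{e(\theta|(k+m))}$ refines the one indexed by $\Omega_{e(\theta|k)}$ (Lemma \ref{lemstruc3}), this set is a union of tiles of $\pi(\theta|(k+m))\subset\pi(\theta)$. So part (2) is repairable essentially as you planned, whereas part (1) genuinely needs the missing ingredients ($\gcd\{a_i\}=1$ and the isometric-combination decomposition) identified above.
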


In Section \ref{strongsec} the concept of \textit{rigidity} of an IFS is
defined. We postpone the definition because additional notation is required.
There are numerous examples of rigid $\mathcal{F}$, including the golden b IFS
in Section \ref{exsec}. The following theorem is proved in Section
\ref{strongsec}.

\begin{theorem}
\label{intersectthm} Let $\mathcal{F}$ be strongly rigid. If $\theta
,\theta^{\prime}\in\lbrack N]^{\ast}$and $E\in\mathcal{E}$ are such that
$\pi(\theta)\cap E\pi(\theta^{\prime})$ is a nonempty common tiling, then
either $\pi(\theta)\subset E\pi(\theta^{\prime})$ or $E\pi(\theta^{\prime
})\subset\pi(\theta)$. If $e(\theta)=e(\theta^{\prime}),$ then $E\pi
(\theta^{\prime})=\pi(\theta).$
\end{theorem}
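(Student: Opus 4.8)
The plan is to reduce the theorem to a single rigidity fact about isometric overlaps of copies of the attractor. Write $\Pi_{k}:=\{f_{\sigma}(A):\sigma\in\Omega_{k}\}$; by Definition~\ref{defONE} every $\pi(\eta)$ with $\eta\in[N]^{\ast}$ is then $\pi(\eta)=f_{-\eta}(\Pi_{e(\eta)})$, where for a similitude $h$ and a collection $\mathcal S$ of sets I write $h(\mathcal S):=\{h(S):S\in\mathcal S\}$, and by Theorem~\ref{theoremONE} each $\pi(\eta)$ is a tiling whose tiles are similitude images of $A$. Replacing $(\theta,\theta^{\prime},E)$ by $(\theta^{\prime},\theta,E^{-1})$ if necessary, I may assume $e(\theta)\le e(\theta^{\prime})$, and it then suffices to prove $\pi(\theta)\subseteq E\pi(\theta^{\prime})$: this yields one of the two stated alternatives, and applying it in both directions when $e(\theta)=e(\theta^{\prime})$ will give the final equality. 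Introduce
\[
g:=f_{-\theta^{\prime}}^{-1}\,E^{-1}\,f_{-\theta},
\]
a similitude whose scaling ratio is $s^{\,e(\theta^{\prime})-e(\theta)}$, a nonnegative integer power of $s$. Since $E^{-1}f_{-\theta}=f_{-\theta^{\prime}}g$, the containment $E^{-1}\pi(\theta)\subseteq\pi(\theta^{\prime})$ is exactly the statement $g(\Pi_{e(\theta)})\subseteq\Pi_{e(\theta^{\prime})}$.

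Next I would extract the geometric content of the hypothesis. Choose a tile $t_{0}$ belonging to both $\pi(\theta)$ and $E\pi(\theta^{\prime})$; then $t_{0}=f_{-\theta}f_{\sigma}(A)$ for some $\sigma\in\Omega_{e(\theta)}$ and $E^{-1}(t_{0})=f_{-\theta^{\prime}}f_{\sigma^{\prime}}(A)$ for some $\sigma^{\prime}\in\Omega_{e(\theta^{\prime})}$. Applying $g$ to $f_{\sigma}(A)$ and unwinding the definitions gives $g(f_{\sigma}(A))=f_{-\theta^{\prime}}^{-1}E^{-1}(t_{0})=f_{\sigma^{\prime}}(A)$. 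As $f_{\sigma}(A)$ and $f_{\sigma^{\prime}}(A)$ are similitude copies of $A$ lying inside $A$, the intersection $g(A)\cap A$ contains $f_{\sigma^{\prime}}(A)$ and hence has Hausdorff dimension $\dim_{H}A$.

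The crux --- and the step I expect to be the main obstacle --- is to upgrade this single full-dimensional overlap to a rigid algebraic identity. This is where strong rigidity of $\mathcal{F}$, in the form to be developed in Section~\ref{strongsec}, enters: a similitude $g$ whose scaling ratio is a nonnegative integer power of $s$ and whose image meets $A$ in a set of full Hausdorff dimension must satisfy $g(A)\subseteq A$ and, more precisely, $g=f_{\omega}$ for some $\omega\in[N]^{\ast}$. It is precisely to exclude a ``misaligned'' partial overlap of $g(A)$ with $A$, and to rule out a spurious nontrivial symmetry of $A$ that would spoil the word structure, that strong (rather than mere) rigidity is needed; the open set condition is what lets one convert a full-dimensional overlap into an honest word $\omega$. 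Comparing scaling ratios then forces $e(\omega)=e(\theta^{\prime})-e(\theta)$.

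Finally I would feed $g=f_{\omega}$ back in. One has $g(\Pi_{e(\theta)})=\{f_{\omega\tau}(A):\tau\in\Omega_{e(\theta)}\}$, and a routine check from the definitions of $e$, $e^{-}$ and $\Omega$ --- using $\tau\ne\varnothing$, $e(\omega\tau)=e(\omega)+e(\tau)$ and $e^{-}(\omega\tau)=e(\omega)+e^{-}(\tau)$ --- shows that $\tau\in\Omega_{e(\theta)}$ implies $\omega\tau\in\Omega_{e(\omega)+e(\theta)}=\Omega_{e(\theta^{\prime})}$. Hence $g(\Pi_{e(\theta)})\subseteq\Pi_{e(\theta^{\prime})}$, i.e. $\pi(\theta)\subseteq E\pi(\theta^{\prime})$. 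When $e(\theta)=e(\theta^{\prime})$ this argument gives $e(\omega)=0$, so $\omega=\varnothing$, $g=\mathrm{id}$, $E^{-1}f_{-\theta}=f_{-\theta^{\prime}}$, and therefore $E\pi(\theta^{\prime})=Ef_{-\theta^{\prime}}(\Pi_{e(\theta^{\prime})})=f_{-\theta}(\Pi_{e(\theta)})=\pi(\theta)$.
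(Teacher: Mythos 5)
There is a genuine gap, and it is located exactly where you flag it: the ``crux'' step is not only unproved, it is false, and no repair is possible within your strategy because you never use the full hypothesis. Strong rigidity (Definition~\ref{strongdef}) is a statement about two \emph{tilings} $T_{i}$ and $ET_{j}$ whose common tiles tile $A_{i}\cap EA_{j}$; it is not the assertion that a similitude $g$ with ratio $s^{m}$ whose image meets $A$ in a full-dimensional set must equal some $f_{\omega}$. That assertion fails for the paper's flagship strongly rigid example, the golden b: $A$ has nonempty interior, so composing $s^{m}$ with a tiny translation gives full-dimensional overlap with $g\neq f_{\omega}$; and even the exact-tile version you actually derive does not save it. From one shared tile you get $g(f_{\sigma}(A))=f_{\sigma'}(A)$, which together with rigidity (ii) yields $g=f_{\sigma'}f_{\sigma}^{-1}$, and this is of the form $f_{\omega}$ only when $\sigma'$ ends in $\sigma$ --- something the hypotheses do not provide. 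Concretely, take $\theta=1$, $\theta'=2$, $E=\mathrm{id}$ for the golden b: by \eqref{eqthmONE}, $\pi(\varnothing)=\{f_{1}(A),f_{2}(A)\}\subset\pi(1)\cap\pi(2)$, so your argument (which uses only the existence of one common tile $t_{0}$) would run verbatim with $t_{0}=f_{1}(A)$, $\sigma=11$, $\sigma'=21$, giving $g=f_{2}f_{1}^{-1}$, and would ``conclude'' $\pi(1)\subset\pi(2)$ or vice versa --- which is false, since $\pi(1)\cap\pi(2)$ does not even tile the intersection of the supports (Figure~\ref{img1013}, and this is exactly the hypothesis of Theorem~\ref{1to1thm}). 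So any proof that extracts only a single common tile from the hypothesis ``$\pi(\theta)\cap E\pi(\theta')$ is a nonempty common tiling'' cannot be correct: the assumption that the common tiles tile all of $A_{-\theta}\cap EA_{-\theta'}$ is essential and must be used globally.

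This is precisely what the paper's proof does and your sketch omits. The paper transfers the problem to the canonical tilings via $\pi(\theta)=E_{\theta}T_{e(\theta)}$, then uses Lemma~\ref{srinterlem} to show the ``tiles the intersection of supports'' property is preserved under the amalgamation-and-shrinking map $\alpha$, applies $\alpha^{k}$ to reduce to a configuration $\widetilde{E}T_{0}\cap T_{l-k}$ (Lemma~\ref{intersectlemma}), rewrites $T_{l-k}$ as an isometric combination of $T_{0},\dots,T_{a_{\max}-1}$ via Lemma~\ref{lemma02}, and only then invokes strong rigidity in its actual form to get $\widetilde{E}T_{0}\subset T_{l-k}$, pulling back with $\alpha^{-k}$. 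Your reduction of the theorem's final sentence to the two containments, and your closing combinatorial computation that $g=f_{\omega}$ with $e(\omega)=e(\theta')-e(\theta)$ would imply $\omega\tau\in\Omega_{e(\theta')}$ for $\tau\in\Omega_{e(\theta)}$, are both fine; but the bridge from the hypothesis to $g=f_{\omega}$ is the whole content of the theorem, and it requires the tiling-overlap hypothesis and the $\alpha$-machinery (or an equivalent), not a pointwise rigidity principle about overlapping copies of $A$.
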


A \textit{symmetry} of a tiling is an isometry that takes tiles to tiles. A
tiling is \textit{periodic} if there exists a translational symmetry;
otherwise the tiling is \textit{non-periodic}. For example, any tiling of a
quadrant of $\mathbb{R}^{2}$ by congruent squares is periodic. The proof of
the following theorem is given in Section \ref{proofofTWO}.

\begin{theorem}
\label{theoremTWO}If $\mathcal{F}$ is strongly rigid, then there does not
exist any non-identity isometry $E\in\mathcal{E}$ and $\theta\in\lbrack
N]^{\infty}$ such that $E\pi(\theta)\subset\pi(\theta)$.
\end{theorem}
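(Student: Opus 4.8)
The plan is to argue by contradiction, exploiting the self-similarity of $\pi(\theta)$ together with Theorem~\ref{intersectthm}. Suppose $E$ is a non-identity isometry with $E\pi(\theta)\subset\pi(\theta)$. The first step is to reduce to a situation controlled by the bounded tilings. For each $k$, recall that $\pi(\theta|k)$ is a bounded sub-tiling of $\pi(\theta)$, and the supports $F_k = f_{-\theta|k}(A)$ exhaust $\mathrm{support}(\pi(\theta))$. I would like to apply $E$ to a large patch $\pi(\theta|k)$: since $E\pi(\theta)\subset\pi(\theta)$ and $E$ is an isometry taking tiles to tiles, $E\pi(\theta|k)$ is a patch of $\pi(\theta)$, hence (by quasiperiodicity / the nesting structure of Theorem~\ref{theoremONE}) contained in some $\pi(\theta|k')$ for $k'$ large enough. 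So for each $k$ there is $k'=k'(k)$ with $E\,\pi(\theta|k)\subset\pi(\theta|k')$.

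The second step is to invoke Theorem~\ref{intersectthm}. The tilings $\pi(\theta|k)$ and $\pi(\theta|k')$ are bounded, and $E\pi(\theta|k)$ is a common sub-tiling of $E\pi(\theta|k')\cdot$(itself) — more precisely, one should set up the hypothesis of Theorem~\ref{intersectthm} so that $E\pi(\theta|k)$ is (a subset of) a common tiling of $\pi(\theta|k')$ and $E\pi(\theta|k')$. Since $E$ has scaling ratio $1$, we have $e$-values related appropriately; if $k,k'$ can be chosen with $e(\theta|k)=e(\theta|k')$ — which forces $k=k'$ since $e$ is strictly increasing in $k$ along $\theta$ — then Theorem~\ref{intersectthm} gives $E\pi(\theta|k)=\pi(\theta|k)$, i.e. $E$ restricts to a symmetry of the bounded tiling $\pi(\theta|k)$. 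The more delicate case is $k\neq k'$: then Theorem~\ref{intersectthm} yields a containment $\pi(\theta|k)\subset E^{-1}\pi(\theta|k')$ or the reverse, and one must track how $E$ maps the finite prototile decomposition of one into the other; comparing scaling (all tiles in both are isometric copies of $\{s^iA\}$) and supports, one pushes toward the equality case again in the limit.

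The third step is to derive a contradiction from $E$ being a non-identity symmetry. Passing $k\to\infty$, the relations $E\pi(\theta|k)=\pi(\theta|k)$ (or their asymptotic versions) should force $E$ to be a symmetry of all of $\pi(\theta)$, in fact fixing the exhausting family $\{F_k\}$ setwise up to the combinatorics of the tiling. Now here is where strong rigidity of $\mathcal{F}$ enters decisively: a symmetry of $\pi(\theta)$ induces, via the addressing structure, an isometry conjugating the IFS maps $f_{-\theta|k}$ among themselves in a way incompatible with rigidity unless $E=id$. Concretely, since $\pi(\theta|0)=\{f_i(A):i\in[N]\}$ is the ``innermost'' patch and $E$ must permute the tiles of $\pi(\theta)$ preserving the nested structure \eqref{eqthmONE}, $E$ must fix the innermost patch or map it deeper; a size/support argument pins $E$ to fixing a distinguished tile, and then strong rigidity of $\mathcal{F}$ (which I am allowed to use as defined in Section~\ref{strongsec}) says the only isometry compatible with the self-similar address combinatorics of that tile is the identity.

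The main obstacle I anticipate is the bookkeeping in the second step — verifying precisely that the hypotheses of Theorem~\ref{intersectthm} are met, i.e. that $E\pi(\theta|k)$ (or a suitable translate/restriction) really is a \emph{common tiling} of the two bounded pieces and not merely an overlap of lower-dimensional type, and then handling the $k\neq k'$ case so that the containment alternative also collapses to equality in the limit rather than producing a proper self-embedding of a bounded tiling (which strong rigidity must also rule out). Managing the interplay between the isometry $E$, the addressing maps $f_{-\theta|k}$, and the prototile set $\{s^iA:i=1,\dots,a_{\max}\}$ — especially ensuring the $e$-value matching that lets us apply the equality clause of Theorem~\ref{intersectthm} — is the crux; once $E$ is known to be a symmetry of every $\pi(\theta|k)$, invoking strong rigidity to conclude $E=id$ should be comparatively direct.
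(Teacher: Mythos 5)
Your overall skeleton — localize to the bounded tilings $\pi(\theta|k)$, apply Theorem~\ref{intersectthm}, and finish with rigidity — is the same as the paper's, but the two places where you flag difficulty are exactly where your argument does not close, and in both cases the needed idea is missing rather than merely postponed. First, your application of Theorem~\ref{intersectthm} is set up through the containment $E\pi(\theta|k)\subset\pi(\theta|k')$. With $k\neq k'$ the theorem then only returns the containment you already assumed (its equality clause needs $e(\theta|k)=e(\theta|k')$, i.e.\ $k=k'$), and your proposed escape — ``comparing scaling and supports, one pushes toward the equality case in the limit'' — is not an argument; nothing in the theorem lets you upgrade a proper containment of bounded tilings to an equality by letting $k\to\infty$. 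The point you are missing is that the hypothesis of Theorem~\ref{intersectthm} does not require containment at all: compare $E\pi(\theta|K)$ with $\pi(\theta|K)$ for the \emph{same} $K$. Because $E\pi(\theta)\subset\pi(\theta)$, every tile of $E\pi(\theta|K)$ and every tile of $\pi(\theta|K)$ is a tile of the single tiling $\pi(\theta)$, so any two of them that overlap in full dimension must coincide; hence for $K$ large enough that the supports overlap, $E\pi(\theta|K)\cap\pi(\theta|K)$ is automatically a nonempty common tiling of $EA_{-\theta|K}\cap A_{-\theta|K}$. Since trivially $e(\theta|K)=e(\theta|K)$, the equality clause applies at once and gives $E\pi(\theta|K)=\pi(\theta|K)$, with no $k\neq k'$ case to handle. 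This is the paper's move, and it removes the obstacle you identified rather than resolving it.

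Second, your endgame from ``$E$ is a symmetry of $\pi(\theta|K)$'' to $E=id$ is only gestured at: the claims that $E$ must preserve the nested structure \eqref{eqthmONE}, must fix or push deeper the innermost patch, and is then killed by ``address combinatorics'' are unsupported — a symmetry of the tiling has no a priori reason to respect the particular exhaustion by the $\pi(\theta|k)$. The concrete mechanism in the paper is different: write $\pi(\theta|K)=E_{\theta|K}T_{e(\theta|K)}$ (Lemma~\ref{lemma02}), so the equality becomes an isometry carrying $T_{e(\theta|K)}$ to itself; then apply the amalgamation-and-shrinking map $\alpha^{e(\theta|K)}$ (Lemma~\ref{alphaTlem}), which conjugates this to an isometry $E''$ with $E''T_{0}=T_{0}$, and rigidity (Definition~\ref{rigiddef}) forces $E''=id$, hence $E=id$. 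Without the $\alpha$ machinery (or some equivalent way to descend from $T_{e(\theta|K)}$ to $T_{0}$ where rigidity speaks), your third step does not go through, so as written the proposal has genuine gaps at both of its load-bearing joints.
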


The following theorem is proved in Section~\ref{invertsec}.

\begin{theorem}
\label{1to1thm}If $\pi(i)\cap\pi(j)$ does not tile $\left(  support\text{ }%
\pi(i)\right)  \cap\left(  support\text{ }\pi(j)\right)  $ for all $i\neq j$,
then $\pi:[N]^{\ast}\cup\lbrack N]^{\infty}\rightarrow\mathbb{T}$ is one-to-one.
\end{theorem}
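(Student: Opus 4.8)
The plan is to show that $\pi$ is injective by exhibiting, from any tiling $\pi(\theta)\in\mathbb{T}$, a reconstruction of $\theta$. The key structural fact I would exploit is the nesting \eqref{eqthmONE}: for $\theta\in[N]^{\infty}$ the tiling $\pi(\theta)$ is the increasing union of the $\pi(\theta|k)$, whose supports are $F_{k}=f_{-\theta|k}(A)$, each a scaled isometric copy of the attractor $A$. Thus $\operatorname{support}\pi(\theta)=\bigcup_{k}F_{k}$ is an exhausting nested sequence of ``$A$-copies'', and the increments $F_{k}\setminus F_{k-1}$ are tiled by $\pi(\theta|k)\setminus\pi(\theta|k-1)$. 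The symbol $\theta_{k}$ should be recoverable from how $F_{k-1}$ sits inside $F_{k}$: since $F_{k}=f_{-\theta|k}(A)=f_{-\theta|(k-1)}\!\left(f_{\theta_{k}}^{-1}(A)\right)$ and $f_{\theta_{k}}^{-1}(A)\supseteq f_{\theta_{k}}^{-1}f_{i}(A)$ decomposes $f_{\theta_{k}}^{-1}(A)$ into copies of $A$ one of which is $A$ itself (namely $f_{\theta_{k}}^{-1}f_{\theta_{k}}(A)=A$), the ``position'' of the previous copy inside the current one is governed by which map $f_{\theta_{k}}$ was inverted.

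Concretely, I would argue as follows. First, I would reduce to the infinite case: for $\theta,\theta'\in[N]^{*}$, the supports are bounded and similar to $A$ with diameters scaled by $s^{e(\theta)}$ and $s^{e(\theta')}$ respectively, so $\pi(\theta)=\pi(\theta')$ forces $e(\theta)=e(\theta')$; then within a fixed value of $e$, a direct combinatorial check (or the $e(\theta)=e(\theta')$ clause of Theorem~\ref{intersectthm} in the rigid case, but here we want it without rigidity) identifies $\theta=\theta'$ — actually the finite words $\sigma\in\Omega_{e(\theta)}$ indexing the tiles, together with the fact that $f_{-\theta}f_{\sigma}(A)$ for $\sigma=\theta\theta\cdots$-type strings pin down $\theta$, should do it. Second, for $\theta,\theta'\in[N]^{\infty}$ with $\pi(\theta)=\pi(\theta')$: I claim $\theta|k$ and $\theta'|k$ determine each other inductively. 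The base case $\pi(\varnothing)$ is common. For the inductive step, having shown the first $k-1$ symbols agree, the supports $F_{k-1}=f_{-\theta|(k-1)}(A)=f_{-\theta'|(k-1)}(A)$ coincide, and the set of tiles of $\pi(\theta)$ whose union is the ``next shell'' $F_{k}$ is an intrinsically defined subset once we know it is the union that is again similar to $A$ and contains $F_{k-1}$ with the correct ratio $s^{-a_{\theta_{k}}}$ — but there could be several $i\in[N]$ with $a_{i}$ equal, so the ratio alone is insufficient. This is where the hypothesis enters.

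The hypothesis ``$\pi(i)\cap\pi(j)$ does not tile $(\operatorname{support}\pi(i))\cap(\operatorname{support}\pi(j))$ for $i\neq j$'' is precisely what rules out the ambiguity: if $\theta_{k}=i$ and $\theta'_{k}=j$ with $i\neq j$ but everything up to level $k-1$ agrees, then applying $f_{\theta|(k-1)}=f_{\theta'|(k-1)}$ to the relevant portions of the two tilings and using that $f_{-\theta|(k-1)}$ is a similarity carrying $\pi(i)$-structure to the level-$k$ shell structure, I would derive that $\pi(i)\cap\pi(j)$ tiles the intersection of their supports, contradicting the hypothesis. More carefully: the level-$k$ shell of $\pi(\theta)$ is $f_{-\theta|(k-1)}$ applied to the level-$1$ shell $\pi(i)\setminus\pi(\varnothing)$ sitting over $f_{i}^{-1}(A)$, and similarly for $\theta'$ with $j$; equality of the tilings $\pi(\theta)$ and $\pi(\theta')$ on the overlap region of these two shells pushes back (via the common similarity $f_{\theta|(k-1)}$) to equality of $\pi(i)$ and $\pi(j)$ on $(\operatorname{support}\pi(i))\cap(\operatorname{support}\pi(j))$, i.e.\ $\pi(i)\cap\pi(j)$ tiles that intersection — the forbidden situation.

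The main obstacle I anticipate is the bookkeeping in that last step: making precise the sense in which ``the level-$k$ shell is a similar copy of the level-$1$ shell over $f_{i}^{-1}(A)$'' and verifying that the overlap of consecutive shells $F_{k}\setminus F_{k-1}$ across two competing choices of $\theta_{k}$ is genuinely nonempty and large enough that tiling it forces the contradiction — one must use the OSC and the fact that $F_{k-1}=f_{-\theta_k}f_{\theta_k}(A)$ is one of the sub-copies of $f_{-\theta_k}(A)$, so $F_{k-1}\subseteq F_{k}$ always shares boundary overlap with neighboring sub-copies, guaranteeing the shells meet on a set of the full Hausdorff dimension $D_H$ only if $i=j$. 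I would also need to handle the mixed case $\theta\in[N]^{*}$, $\theta'\in[N]^{\infty}$, which is immediate since one support is bounded and the other is not (Theorem~\ref{theoremONE}). Assembling these, injectivity of $\pi$ follows.
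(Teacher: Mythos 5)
Your overall strategy (reduce to the first index of disagreement and invoke the hypothesis about $\pi(i)$ and $\pi(j)$ there) is the same as the paper's, and your handling of the easy cases (bounded versus unbounded supports; first symbols already different) is fine. But the central inductive step has a genuine gap, and it is not mere bookkeeping. If $\theta$ and $\theta'$ first disagree at position $k$, with common prefix $\gamma=\theta|(k-1)$ and $\theta_k=i\neq j=\theta'_k$, then what you actually get from $\pi(\theta)=\pi(\theta')$ is that $\pi(\gamma i)\cap\pi(\gamma j)$ tiles the intersection of their supports. Pushing back by the similarity $f_{\gamma}$ does \emph{not} turn $\pi(\gamma i)$ into $\pi(i)$: since $\pi(\gamma i)=\{f_{-\gamma}f_i^{-1}f_{\sigma}(A):\sigma\in\Omega_{e(\gamma)+a_i}\}$, applying $f_{\gamma}$ yields $\{f_i^{-1}f_{\sigma}(A):\sigma\in\Omega_{e(\gamma)+a_i}\}$, which is a \emph{refinement} of $\pi(i)=\{f_i^{-1}f_{\sigma}(A):\sigma\in\Omega_{a_i}\}$ by $e(\gamma)$ further levels of subdivision. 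So your identification of the ``level-$k$ shell'' of $\pi(\theta)$ with an isometric copy of the level-$1$ shell of $\pi(i)$ is false as a statement about tilings (it is true only for their supports), and the implication you need --- that the common \emph{refined} tiles tiling $f_i^{-1}(A)\cap f_j^{-1}(A)$ forces the common \emph{coarse} tiles of $\pi(i)$ and $\pi(j)$ to tile it --- does not follow: common small tiles need not assemble into common large tiles.

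That missing bridge is precisely what the paper builds in Sections on strong rigidity and invertibility: the amalgamation-and-shrinking operator $\alpha$ (Lemmas \ref{inverselemma}, \ref{alphaTlem}, \ref{srinterlem}), used to define shift maps $S_i=f_i s^{-a_i}\alpha^{a_i}$ with the property $S_{\theta_1}\pi(\theta)=\pi(S\theta)$; applying $S_{\theta_K}\cdots S_{\theta_1}$ (or their inverses) transports the assumed equality $\pi(\theta)=\pi(\theta')$ down to $\pi(S^{K}\theta)=\pi(S^{K}\theta')$, where the first-symbol argument and the hypothesis apply. Proving that this coarsening is well defined and compatible with intersections and disjoint unions is the real content, and your proposal contains no substitute for it. Two smaller points: your disposal of the finite-word case (``a direct combinatorial check \dots should do it'') is asserted rather than proved --- the paper needs Lemma \ref{noninjlem} for this, with a careful argument that distinct $\theta\neq\theta'$ with $e(\theta)=e(\theta')$ cannot give the same isometry; and your closing remark that the shells ``meet on a set of full Hausdorff dimension only if $i=j$'' is off the mark, since $f_i^{-1}(A)\cap f_j^{-1}(A)\supseteq A$ always has full dimension --- the issue is never whether the overlap region is large, but whether the common tiles tile it.
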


\section{Structure of $\{\Omega_{k}\}$ and Symbolic IFS Tilings}

The results in this section, which will be applied later, relate to a symbolic
version of the theory in this paper. The next two lemmas provide recursions
for the sequence $\Omega_{k}:=\{\sigma\in\lbrack N]^{\ast}:e(\sigma)>k\geq
e^{-}(\sigma)\}$. In this section the square union symbol $\bigsqcup$ denotes
a disjoint union.

\begin{lemma}
\label{lemma1} For all $k\geq a_{\max}$
\begin{equation}
\Omega_{k}={\bigsqcup_{i=1}^{N}i\, \Omega_{k-a_{i}}}. \label{indexformula}%
\end{equation}

\end{lemma}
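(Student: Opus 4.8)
The plan is to exploit the defining property of $\Omega_k$ directly, namely that $\sigma\in\Omega_k$ precisely when $e(\sigma)>k\geq e^-(\sigma)$, and to decompose any such $\sigma$ by its first letter. First I would note that $\Omega_k$ cannot contain the empty word when $k\geq 0$ (since $e(\varnothing)=0\not>k$), nor can it contain any single letter $i$ when $k\geq a_{\max}$ (since $e(i)=a_i\leq a_{\max}\leq k$, so $e(i)>k$ fails). Hence every $\sigma\in\Omega_k$ has length at least $2$, and we may write $\sigma=i\,\tau$ with $i\in[N]$ and $\tau\in[N]^*$ nonempty.

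Next I would establish the bookkeeping identity relating $e,e^-$ of $\sigma$ to those of $\tau$: since the first letter of $\sigma$ is $i$, we have $e(\sigma)=a_i+e(\tau)$ and $e^-(\sigma)=a_i+e^-(\tau)$ (the latter because $\sigma^-=i\,\tau^-$ when $\tau$ is nonempty). Substituting into the inequalities $e(\sigma)>k\geq e^-(\sigma)$ gives $a_i+e(\tau)>k\geq a_i+e^-(\tau)$, i.e. $e(\tau)>k-a_i\geq e^-(\tau)$, which is exactly the condition $\tau\in\Omega_{k-a_i}$. Note $k-a_i\geq k-a_{\max}\geq 0$, so $\Omega_{k-a_i}$ is well-defined (an index in $\mathbb{N}_0$). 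This shows $\sigma\in i\,\Omega_{k-a_i}$, giving the inclusion $\Omega_k\subseteq\bigcup_{i=1}^N i\,\Omega_{k-a_i}$.

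For the reverse inclusion, I would simply run the computation backwards: if $\tau\in\Omega_{k-a_i}$ then $e(\tau)>k-a_i\geq e^-(\tau)$, and since $k-a_i\geq 0$ the word $\tau$ is nonempty, so $e(i\tau)=a_i+e(\tau)>k$ and $e^-(i\tau)=a_i+e^-(\tau)\leq k$, whence $i\tau\in\Omega_k$. Finally, disjointness of the union over $i$ is immediate because words in $i\,\Omega_{k-a_i}$ begin with the letter $i$, so distinct values of $i$ yield disjoint families; this is what licenses writing $\bigsqcup$.

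The only mild subtlety — and the one place to be careful — is the handling of the empty word and one-letter words, which is exactly where the hypothesis $k\geq a_{\max}$ is used: it guarantees no $\sigma\in\Omega_k$ has length $\leq 1$, so that the "peel off the first letter" operation always lands in a genuine (possibly empty, but still valid) word $\tau$, and symmetrically that $\Omega_{k-a_i}$ has a sensible nonnegative index. Everything else is the routine arithmetic of $e$ and $e^-$ under prepending a letter. I would present the two inclusions and the disjointness as three short verifications.
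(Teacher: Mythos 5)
Your proof is correct and takes essentially the same route as the paper's: decompose each word of $\Omega_k$ by its first letter and track how $e$ and $e^-$ transform under prepending, which is exactly the paper's identity $i\,\Omega_{k-a_i}=\Omega_k\cap i[N]^{\ast}$ for $k\geq a_i$, followed by the union over $i$. Your explicit treatment of the empty word and one-letter words (where $e^-(i\tau)=a_i+e^-(\tau)$ would otherwise fail) is a point the paper leaves implicit, but the underlying argument is the same.
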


\begin{proof}
For all $k\in\mathbb{N}_{0}$ we have%
\begin{align*}
i\,\Omega_{k}  &  =\{i\sigma:\sigma\in\lbrack N]^{\ast},e(\sigma) > k \geq
e^{-}(\sigma)\}\\
&  =\{\omega:\omega\in\lbrack N]^{\ast},e(\omega) > k+a_{i} \geq e^{-}%
(\omega),\omega_{1}=i\}\\
&  =\Omega_{k+a_{i}}\cap i[N]^{\ast}\text{.}%
\end{align*}
It follows that
\[
i\,\Omega_{k-a_{i}}=\Omega_{k}\cap i[N]^{\ast}%
\]
for all $k\geq a_{i}$, from which it follows that $\Omega_{k}={\bigsqcup
_{i=1}^{N}i\Omega_{k-a_{i}}}$ for all $k\geq a_{\max}$.
\end{proof}

\begin{lemma}
\label{lemstruc2} With $\Omega_{k}^{^{\prime}} :=\{\omega\in\lbrack N]^{\ast
}:e(\omega)=k+1\}$, we have $\Omega_{k}^{^{\prime}}\subset\Omega_{k}$ and
\[
\Omega_{k+1}=\{\Omega_{k}\backslash\Omega_{k}^{\prime}\} \, \bigsqcup\,
\left\{  {\bigsqcup_{i=1}^{N}\Omega_{k}^{^{\prime}}i}\right\}  .
\]

\end{lemma}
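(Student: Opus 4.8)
The plan is to prove the two assertions of Lemma~\ref{lemstruc2} separately, working directly from the definitions $\Omega_k=\{\sigma:e(\sigma)>k\ge e^-(\sigma)\}$ and $\Omega_k'=\{\omega:e(\omega)=k+1\}$, and recalling that $e^-(\omega)=e(\omega^-)$ where $\omega^-$ deletes the last symbol.

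First I would check the inclusion $\Omega_k'\subset\Omega_k$. If $e(\omega)=k+1$, then certainly $e(\omega)=k+1>k$; and since the last scaling power $a_{\omega_{|\omega|}}\ge 1$, we get $e^-(\omega)=e(\omega)-a_{\omega_{|\omega|}}\le k+1-1=k$, so $\omega\in\Omega_k$. (Note $\omega\ne\varnothing$ here since $e(\varnothing)=0\ne k+1$, so the last symbol really exists.)

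Next I would establish the disjoint-union decomposition of $\Omega_{k+1}$. The natural bijection to exploit is $\omega\leftrightarrow\omega^-$: a word in $\Omega_{k+1}$ either already lies in $\Omega_k$, or it was "just pushed over" the threshold $k$ by its final symbol. Concretely, take $\omega\in\Omega_{k+1}$, so $e(\omega)>k+1\ge e^-(\omega)$. Case (a): if moreover $e^-(\omega)\le k$, i.e. $e(\omega)>k$ as well, then $\omega\in\Omega_k$; and the extra condition $e(\omega)>k+1$ forces $e(\omega)\ne k+1$, so $\omega\notin\Omega_k'$, hence $\omega\in\Omega_k\setminus\Omega_k'$. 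Case (b): if instead $e^-(\omega)=k+1$ (the only remaining possibility given $k+1\ge e^-(\omega)$ fails to be strict), then writing $\omega=\omega^- i$ with $i=\omega_{|\omega|}$, we have $e(\omega^-)=e^-(\omega)=k+1$, so $\omega^-\in\Omega_k'$ and $\omega\in\bigsqcup_{i=1}^N\Omega_k' i$. Conversely, one checks both families sit inside $\Omega_{k+1}$: if $\sigma\in\Omega_k\setminus\Omega_k'$ then $e(\sigma)>k$ and $e(\sigma)\ne k+1$ give $e(\sigma)>k+1$, while $e^-(\sigma)\le k<k+1$, so $\sigma\in\Omega_{k+1}$; and if $\omega=\tau i$ with $\tau\in\Omega_k'$ then $e^-(\omega)=e(\tau)=k+1$ and $e(\omega)=k+1+a_i>k+1$, so $\omega\in\Omega_{k+1}$. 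Finally I would verify disjointness: a word $\sigma\in\Omega_k\setminus\Omega_k'$ has $e^-(\sigma)\le k$, whereas a word in $\bigsqcup_i\Omega_k' i$ has $e^-(\omega)=k+1$, so the two parts are disjoint; and the $N$ pieces $\Omega_k' i$ are pairwise disjoint because they end in distinct symbols. Reassembling the two cases with their converses yields the claimed equality.

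I do not expect a genuine obstacle here — the proof is a bookkeeping argument in the spirit of Lemma~\ref{lemma1}'s proof. The one point requiring mild care is the exhaustiveness of the case split in the forward direction: from $\omega\in\Omega_{k+1}$ one has $e^-(\omega)\le k+1$, and I must observe that this splits cleanly into $e^-(\omega)\le k$ and $e^-(\omega)=k+1$ with no gap, and that the threshold condition $e(\omega)>k+1$ (strict) is exactly what removes $\Omega_k'$ from the first case. Keeping track of strict versus non-strict inequalities throughout is the only place an error could slip in.
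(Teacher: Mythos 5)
Your proof is correct and follows essentially the same route as the paper: a two-way inclusion argument via case analysis on whether $e^{-}(\omega)\le k$ or $e^{-}(\omega)=k+1$, using integrality of $e$ to convert $e(\omega)>k$, $e(\omega)\ne k+1$ into $e(\omega)>k+1$. You additionally spell out the inclusion $\Omega_k'\subset\Omega_k$ and the disjointness of the pieces, which the paper asserts without proof; these checks are fine as written.
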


\begin{proof}
(i) We first show that $\{\Omega_{k}\backslash\Omega_{k}^{\prime}\} \,
\bigsqcup\, \left\{  {\bigsqcup_{i=1}^{N}\Omega_{k}^{^{\prime}}i}\right\}
\subset\Omega_{k+1}$.

Suppose $\theta\in\Omega_{k}\backslash\Omega_{k}^{\prime}$. Then $e^{-}%
(\theta)\leq k<e(\theta)$ and $e(\theta)\neq k+1$. Hence $e^{-}(\theta)\leq
k+1<e(\theta)$ and so $\theta\in\Omega_{k+1}$.

Suppose ${\theta\in\Omega_{k}^{^{\prime}}i}$ for some $i\in\lbrack N].$ Then
${\theta=\theta}^{-}{i}$ where ${\theta}^{-}\in{\Omega_{k}^{^{\prime}}}$,
$e^{-}(\theta)=e({\theta}^{-})=k+1$ and $e(\theta)=e({\theta}^{-}%
{i)=k+1+a}_{i}.$ Hence $e\left(  \theta\right)  >k+1=e^{-}(\theta)$. Hence
$e^{-}(\theta)\leq k+1<e\left(  \theta\right)  $. Hence $\theta\in\Omega
_{k+1}$.

(ii) We next show that $\Omega_{k+1}\subset\{\Omega_{k}\backslash\Omega
_{k}^{\prime}\}\, \bigsqcup\, \left\{  {\bigsqcup_{i=1}^{N}\Omega
_{k}^{^{\prime}}i}\right\}  $.

Let $\theta\in\Omega_{k+1}.$ Then $e^{-}(\theta)=e(\theta^{-})\leq
k+1<e(\theta)$.

If $e(\theta^{-})=k+1$, then $\theta\in{\Omega_{k}^{^{\prime}}}\theta
_{\left\vert \theta\right\vert }\subset\{\Omega_{k}\backslash\Omega
_{k}^{\prime}\} \, \bigsqcup\, \left\{  {\bigsqcup_{i=1}^{N}\Omega
_{k}^{^{\prime}}i}\right\}  $.

If $e(\theta^{-})\neq k+1$, then $e(\theta^{-})<k+1$. So $e(\theta^{-})\leq
k<k+1<e(\theta)$; so $\theta\in\Omega_{k}\backslash\Omega_{k}^{\prime}%
\subset\{\Omega_{k}\backslash\Omega_{k}^{\prime}\} \, \bigsqcup\, \left\{
{\bigsqcup_{i=1}^{N}\Omega_{k}^{^{\prime}}i}\right\}  $.
\end{proof}

For all $\theta\in\lbrack N]^{\ast},$ define $c(\theta)=\{\omega\in\lbrack
N]^{\infty}:\omega_{1}\omega_{2}\cdots\omega_{\left\vert \theta\right\vert
}=\theta\}$. (Such sets are sometimes called \textit{cylinder sets}.) With the
metric on $[N]^{\infty}$ defined to be $d_{0}(\theta,\omega)=2^{-\min
\{k:\theta_{k}\neq\omega_{k}\}}$ for $\theta\neq\omega$, the diameter of
$c(\theta)$ is $2^{-(\left\vert \theta\right\vert +1)}$. The following lemma
tells us how $\{c(\theta):\theta\in\Omega_{k}\}$ may be considered as a tiling
of the symbolic space $[N]^{\infty}$.

\begin{lemma}
\label{lemstruc3} For each $k\in\mathbb{N}_{0}$ the collection of sets
$\{c(\theta):\theta\in\Omega_{k}\}$ form a partition of $[N]^{\infty}$, each
part of which has diameter belonging to $\{s^{k+1},s^{k+2},\dots s^{k+a_{\max
}}\}$ where $s=1/2$. That is,
\[
\left[  N\right]  ^{\infty}=\bigsqcup\limits_{\theta\in\Omega_{k}}c(\theta)
\]
for all $k\in\mathbb{N}_{0}$.
\end{lemma}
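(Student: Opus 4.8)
The plan is to read off the partition directly from the definition $\Omega_{k}=\{\sigma\in[N]^{\ast}:e(\sigma)>k\ge e^{-}(\sigma)\}$, in three steps: (i) show that no word in $\Omega_{k}$ is a prefix of another, which yields pairwise disjointness of the cylinders; (ii) show that every $\omega\in[N]^{\infty}$ has exactly one prefix in $\Omega_{k}$, which yields the covering; and (iii) bound $e(\theta)$ to pin down the possible diameters. Steps (i) and (ii) together give $[N]^{\infty}=\bigsqcup_{\theta\in\Omega_{k}}c(\theta)$.

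For (i), suppose $\theta,\theta'\in\Omega_{k}$ with $\theta$ a proper prefix of $\theta'$. Then $|\theta|\le|\theta'|-1$, so $\theta$ is also a prefix of $(\theta')^{-}$, and since every $a_{n}\ge1$ this gives $e(\theta)\le e((\theta')^{-})=e^{-}(\theta')$. But $\theta\in\Omega_{k}$ forces $e(\theta)>k$, while $\theta'\in\Omega_{k}$ forces $e^{-}(\theta')\le k$, so $e(\theta)>k\ge e^{-}(\theta')\ge e(\theta)$, a contradiction. Hence distinct $\theta,\theta'\in\Omega_{k}$ are prefix-incomparable, so they first differ at some coordinate $\le\min\{|\theta|,|\theta'|\}$ and no infinite word extends both; thus $c(\theta)\cap c(\theta')=\varnothing$.

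For (ii), fix $\omega\in[N]^{\infty}$ and consider the sequence $j\mapsto e(\omega|j)$ for $j\in\mathbb{N}_{0}$: it starts at $e(\omega|0)=0$, is strictly increasing (each $a_{n}\ge1$), and tends to $\infty$, so the half-open intervals $[\,e(\omega|(j-1)),\,e(\omega|j)\,)$ with $j\ge1$ partition $[0,\infty)$. Therefore there is a unique $j\ge1$ with $e(\omega|(j-1))\le k<e(\omega|j)$; setting $\theta=\omega|j$, this says exactly $e^{-}(\theta)\le k<e(\theta)$, i.e.\ $\theta\in\Omega_{k}$, and clearly $\omega\in c(\theta)$. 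Combined with (i) this establishes the partition.

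For (iii), every $\theta\in\Omega_{k}$ satisfies $e(\theta)>k$, hence $e(\theta)\ge k+1$, and $e(\theta)=e^{-}(\theta)+a_{\theta_{|\theta|}}\le k+a_{\max}$, so $e(\theta)\in\{k+1,\dots,k+a_{\max}\}$; since the cylinder $c(\theta)$ has diameter $s^{e(\theta)}$ with $s=1/2$ (the size attached to the symbolic tile $c(\theta)$, mirroring that $f_{\theta}(A)$ has diameter proportional to $s^{e(\theta)}$), every part of the partition has diameter in $\{s^{k+1},\dots,s^{k+a_{\max}}\}$, as claimed. I do not expect a genuine obstacle: the whole statement rests on the elementary ``no proper prefix'' property of $\Omega_{k}$ from step (i), which is precisely what makes $\Omega_{k}$ a cross-section (an antichain blocking every branch) of the tree $[N]^{\ast}$. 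The one point needing a moment's care is the diameter bookkeeping in step (iii): one must track $c(\theta)$ by $e(\theta)$, not by its word length $|\theta|$, since these differ exactly when the exponents $a_{n}$ are not all equal.
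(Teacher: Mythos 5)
Your proof of the partition statement is correct and follows essentially the same route as the paper's: the paper's proof simply asserts that each $\omega\in[N]^{\infty}$ has a unique prefix $\omega|j\in\Omega_{k}$ and deduces both the covering and the disjointness from that uniqueness, while you supply the missing justification — existence and uniqueness of $j$ from the strict monotonicity of $j\mapsto e(\omega|j)$, and disjointness from the prefix-incomparability (antichain) property of $\Omega_{k}$, which is the same underlying fact made explicit. So on the main claim you are doing what the paper does, only with the details filled in.

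The one place you diverge is step (iii), and it is worth flagging. The paper's own proof says nothing at all about diameters, and your assertion that $c(\theta)$ has diameter $s^{e(\theta)}$ does not follow from the metric the paper actually defines: with $d_{0}(\theta,\omega)=2^{-\min\{k:\theta_{k}\neq\omega_{k}\}}$ the paper states $\operatorname{diam}c(\theta)=2^{-(|\theta|+1)}$, and $|\theta|+1=e(\theta)$ only when every letter of $\theta$ carries exponent $a_{i}=1$. Your combinatorial bound $e(\theta)\in\{k+1,\dots,k+a_{\max}\}$ is correct and is plainly what the lemma intends (it reproduces exactly the range in the statement), but to make ``diameter $=s^{e(\theta)}$'' literally true one needs a weighted metric of the form $d(\omega,\omega')=s^{e(\omega\wedge\omega')}$, with $\omega\wedge\omega'$ the longest common prefix, which the paper never introduces. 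Indeed, under $d_{0}$ the stated diameter range can fail: with $a_{1}=1$, $a_{2}=5$, the word $\theta=2$ lies in $\Omega_{4}$, yet $c(2)$ has $d_{0}$-diameter $2^{-2}\notin\{2^{-5},\dots,2^{-9}\}$. So the issue lies in the lemma's phrasing (and the paper's silence on it) rather than in your argument; since only the partition property is used later (in Lemma 5 and Proposition 2), your proof delivers everything that is actually needed, but you should either adopt the weighted metric explicitly or restate the diameter claim in terms of $e(\theta)$ rather than the $d_{0}$-diameter.
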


\begin{proof}
Assume that $\omega\in[N]^{\infty}$. There is a unique $j$ such that $\omega|j
\in\Omega_{k}$. Letting $\theta= w|j$ we have $\omega\in c(\theta)
\subset[N]^{\infty}$. Therefore $[N]^{\infty} = \bigcup_{\theta\in\Omega_{k}}
c(\theta)$.

Assume that $\theta,\theta^{\prime}\in\Omega_{k}$. If $\omega\in c(\theta)\cap
c(\theta^{\prime})$, then by the definition of cylinder set either
$\theta=\theta^{\prime}$ or $|\theta|\neq|\theta^{\prime}|$. However, if
$|\theta|\neq|\theta^{\prime}|$, then $\omega\big ||\theta|=\theta\in
\Omega_{k}$ and $\omega\big ||\theta^{\prime}|=\theta^{\prime}\in\Omega_{k}$,
which would contradict the uniqueness of $j$. Therefore $[N]^{\infty
}=\bigsqcup_{\theta\in\Omega_{k}}c(\theta)$.
\end{proof}


\section{A Canonical Sequence of Self-similar Tilings}

To facilitate the proofs of the theorems stated in Section~\ref{defsec},
another family of tilings is introduced, tilings isometric to those that are
the subject of this paper. Let
\[
A_{k}=s^{-k}A
\]
for all $k\in\mathbb{N\cup\{}-1,-2,\dots,-a_{\max}\}$, and define, for all
$k\in\mathbb{N}$, a sequence of tilings $T_{k}$ of $A_{k}$ by
\[
T_{k}=\{ s^{-k} f_{\sigma}(A):\sigma\in\Omega_{k}\}.
\]

The following lemma says, in particular, that $T_{k}$ is a non-overlapping
union of copies of $T_{k-a_{i}}$ for $i\in\lbrack N]$ when $k\geq a_{\max}$,
and $T_{k}$ may be expressed as a non-overlapping union of copies of
$T_{k-e(\omega)}$ for $\omega\in\Omega_{{l}}$ when $k$ is somewhat larger than
$l\in\mathbb{N}_{0}$. In this section the square union notation $\bigsqcup$
denotes a non-overlapping union.

\begin{lemma}
\label{lemma02} For all $k\in\mathbb{N}_{0}$ the support of $T_{k}$ is $A_{k}%
$. For all $\theta\in\lbrack N]^{\ast}$,
\[
\pi(\theta)=E_{\theta}T_{e(\theta)}%
\]
where $E_{\theta}$ is the isometry $f_{-\theta}s^{e(\theta)}$. Also
\begin{equation}
T_{k}={\bigsqcup_{i=1}^{N}}E_{k,i}T_{k-a_{i}} \label{Tkformula}%
\end{equation}
for all $k\geq a_{\max}$, where each of the mappings $E_{k,i}=s^{-k}\circ
f_{i}\circ s^{k-a_{i}}$ is an isometry. More generally,
\begin{equation}
T_{k}={\bigsqcup_{\omega\in\Omega_{l}}}E_{k,\omega}T_{k-e(\omega)},
\label{Tkformula2}%
\end{equation}
for all $k\geq l+a_{\max}$ and for all $l\in\mathbb{N}_{0}$, where each of the
mappings $E_{k,\omega} =s^{-k}\circ f_{\omega}\circ s^{k-e(\omega)}$ is an isometry.
\end{lemma}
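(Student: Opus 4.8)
The plan is to verify the three assertions of Lemma~\ref{lemma02} in turn, relying on the recursions for $\{\Omega_k\}$ from Section~4 and on elementary properties of the conjugation of similitudes. First I would record the bookkeeping fact that conjugating a similitude of scaling ratio $r$ by the scaling map $s^t$ (i.e.\ $x\mapsto s^t x$) produces a similitude of the same scaling ratio $r$; in particular $E_{k,i}=s^{-k}\circ f_i\circ s^{k-a_i}$ has scaling ratio $s^{a_i}\cdot s^{-k}\cdot s^{k-a_i}=1$, so it is an isometry, and likewise $E_{k,\omega}=s^{-k}\circ f_\omega\circ s^{k-e(\omega)}$ has scaling ratio $s^{e(\omega)}s^{-k}s^{k-e(\omega)}=1$. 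The map $E_\theta=f_{-\theta}\circ s^{e(\theta)}$ has scaling ratio $s^{-e(\theta)}\cdot s^{e(\theta)}=1$ since $f_{-\theta}$ has ratio $s^{-e(\theta)}$; so it too is an isometry. These are the only ``calculations'' and they are immediate.

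For the identity $\pi(\theta)=E_\theta T_{e(\theta)}$, I would simply unravel the definitions: by Definition~\ref{defONE}, $\pi(\theta)=\{f_{-\theta}f_\sigma(A):\sigma\in\Omega_{e(\theta)}\}$, while $T_{e(\theta)}=\{s^{-e(\theta)}f_\sigma(A):\sigma\in\Omega_{e(\theta)}\}$, so applying $E_\theta=f_{-\theta}\circ s^{e(\theta)}$ tilewise sends $s^{-e(\theta)}f_\sigma(A)\mapsto f_{-\theta}\,s^{e(\theta)}\,s^{-e(\theta)}f_\sigma(A)=f_{-\theta}f_\sigma(A)$, matching term by term. (One should note $s^{e(\theta)}$ and $s^{-e(\theta)}$ commute as scalar maps.) That $\mathrm{support}(T_k)=A_k$ for all $k$: applying the isometry $E_\theta^{-1}$ to the already-established (in Theorem~\ref{theoremONE}, or rather its proof) fact that $\mathrm{support}(\pi(\theta))$ is well-behaved is circular, so instead I would argue directly that $\bigcup_{\sigma\in\Omega_k}f_\sigma(A)=A$ for every $k$ by induction on $k$: for $k=0$, $\Omega_0=[N]$ and $\bigcup_{i\in[N]}f_i(A)=A$ by the attractor equation; for the inductive step one uses Lemma~\ref{lemstruc2} (or Lemma~\ref{lemma1}) to rewrite $\Omega_{k+1}$ in terms of $\Omega_k$ and the self-tiling $f_\sigma(A)=\bigcup_i f_\sigma f_i(A)$ to see the union is unchanged; non-overlapping is inherited from the OSC. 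Multiplying by $s^{-k}$ gives $\mathrm{support}(T_k)=s^{-k}A=A_k$.

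For \eqref{Tkformula}, apply Lemma~\ref{lemma1}: for $k\ge a_{\max}$, $\Omega_k=\bigsqcup_{i=1}^N i\,\Omega_{k-a_i}$ as a disjoint union of index sets, so
\[
T_k=\{s^{-k}f_\sigma(A):\sigma\in\Omega_k\}=\bigsqcup_{i=1}^N\{s^{-k}f_i f_\tau(A):\tau\in\Omega_{k-a_i}\},
\]
and for each fixed $i$ the inner family equals $\{(s^{-k}f_i s^{k-a_i})\,(s^{-(k-a_i)}f_\tau(A)):\tau\in\Omega_{k-a_i}\}=E_{k,i}T_{k-a_i}$. That the union is non-overlapping follows because distinct $i$ give $E_{k,i}(A_{k-a_i})=s^{-k}f_i(A)$, and the $f_i(A)$ are pairwise non-overlapping by the OSC, hence so are their scaled copies (scaling preserves Hausdorff dimension). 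Formula \eqref{Tkformula2} is the same argument with $\Omega_l$ in place of $[N]$: one needs the generalization of Lemma~\ref{lemma1} stating $\Omega_k=\bigsqcup_{\omega\in\Omega_l}\omega\,\Omega_{k-e(\omega)}$ for $k\ge l+a_{\max}$, which follows by iterating Lemma~\ref{lemma1} (or by a direct check: every $\sigma\in\Omega_k$ has a unique prefix $\omega$ with $e(\omega^-)\le l<e(\omega)$, i.e.\ $\omega\in\Omega_l$, and then the tail lies in $\Omega_{k-e(\omega)}$), together with the factorization $f_{\omega\tau}=f_\omega f_\tau$ and the conjugation identity as above.

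The step I expect to require the most care is the generalized index recursion underlying \eqref{Tkformula2} — namely checking that for $k\ge l+a_{\max}$ the prefix-decomposition $\sigma\mapsto(\omega,\tau)$ with $\omega\in\Omega_l$ is a \emph{bijection} onto $\bigsqcup_{\omega\in\Omega_l}\{\omega\}\times\Omega_{k-e(\omega)}$; surjectivity needs the hypothesis $k\ge l+a_{\max}$ (so that after peeling off a prefix of $e$-value in $(l,\,l+a_{\max}]$ the remaining target value $k-e(\omega)$ is still nonnegative, in fact $\ge 0$, so $\Omega_{k-e(\omega)}$ is nonempty and the decomposition genuinely terminates), and injectivity needs the uniqueness of the prefix, which is exactly the uniqueness-of-$j$ argument already used in the proof of Lemma~\ref{lemstruc3}. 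Everything else is routine substitution plus the OSC-preservation-under-similarity remark.
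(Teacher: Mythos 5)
Your argument is correct, and the overall strategy matches the paper's: reduce the tiling identities to identities among the index sets $\Omega_k$ and handle the similitude bookkeeping by the scaling-ratio computation (the paper verifies $E_{k,i}$ is an isometry exactly as you do, and obtains $\pi(\theta)=E_\theta T_{e(\theta)}$ and \eqref{Tkformula} by the same term-by-term substitution via Lemma~\ref{lemma1}). You diverge from the paper in two places, both legitimately. For $\mathrm{support}(T_k)=A_k$, the paper invokes the coding map $\phi:[N]^\infty\to A$ together with the cylinder-set partition of Lemma~\ref{lemstruc3}, whereas you run a direct induction on $k$ using Lemma~\ref{lemstruc2} and the attractor equation; your route is more elementary (no coding map needed), the paper's is shorter because Lemma~\ref{lemstruc3} is already in hand. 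For \eqref{Tkformula2}, the paper proves it by induction on $l$ starting from \eqref{Tkformula} and using Lemma~\ref{lemstruc2}, whereas you prove the closed-form generalization $\Omega_k=\bigsqcup_{\omega\in\Omega_l}\omega\,\Omega_{k-e(\omega)}$ for $k\ge l+a_{\max}$ via the unique-prefix decomposition; this is essentially the uniqueness-of-$j$ argument from Lemma~\ref{lemstruc3} transplanted to finite words, and it has the advantage of making explicit where the hypothesis $k\ge l+a_{\max}$ is used (it guarantees $k-e(\omega)\ge 0$ for every $\omega\in\Omega_l$, so each factor $\Omega_{k-e(\omega)}$ is a legitimate index set), something the paper's one-line induction leaves implicit. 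Your treatment of the non-overlapping assertion (distinct words separate at the first disagreement, then apply the OSC, and scaling preserves Hausdorff dimension) is at the same level of detail as the paper's remark that tiles meet only in similitude images of the set of overlap.
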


\begin{proof}
It is well-known that if $\mathcal{P}$ is a partition of $[N]^{\infty},$ then
$A=%
{\textstyle\bigcup_{\omega\in\mathcal{P}}}
\phi(\omega)$ where $\phi:[N]^{\infty}\rightarrow A$ is the usual (continuous)
coding map defined by $\phi(\omega)=\lim_{k\rightarrow\infty}f_{\omega|k}(x)$
for any fixed $x\in A$. By Lemma \ref{lemstruc3} we can choose $\mathcal{P=}%
\{c(\theta):\theta\in\Omega_{k}\}$. Hence, the support of $T_{k}$ is
\begin{align*}
s^{-k}\{%
{\textstyle\bigcup}
\{f_{\sigma}(A) :\sigma\in\Omega_{k}\}\}  &  =s^{-k}\{%
{\textstyle\bigcup}
\{\phi(\omega):\omega\in\{c(\theta):\theta\in\Omega_{k}\}\}\}\\
&  =s^{-k}A\text{.}%
\end{align*}

The expression $\pi(\theta)=E_{\theta}T_{e(\theta)}$ where $E_{\theta}$ is the
isometry $f_{-\theta}s^{e(\theta)}$ follows from the definitions of
$\pi(\theta)$ and $T_{k}$ on taking $k=e(\theta)$.

Equation (\ref{Tkformula}) follows from Lemma \ref{lemma1} according to these
steps.
\begin{align*}
T_{k}  &  = \{ s^{-k} f_{\sigma}(A):\sigma\in\Omega_{k}\}\text{ (by
definition)}\\
&  =s^{-k}\{f_{\sigma}(A):\sigma\in{\bigsqcup_{i=1}^{N}i\Omega_{k-a_{i}}%
\}}\text{ (by Lemma \ref{lemma1})}\\
&  =s^{-k}{\bigsqcup_{i=1}^{N}}\{f_{i\sigma}(A):\sigma\in\Omega{_{k-a_{i}}%
\}}\text{ (identity)}\\
&  =s^{-k}{\bigsqcup_{i=1}^{N}}f_{i}(\{f_{\sigma}(A):\sigma\in\Omega
{_{k-a_{i}}\})}\text{ (identity)}\\
&  ={\bigsqcup_{i=1}^{N}}E_{k,i}T_{k-a_{i}}\text{ (by definition)}%
\end{align*}

The function $E_{k,i}=s^{-k}\circ f_{i}\circ s^{k-a_{i}}$ is an isometry
because it is a composition of three similitudes, of scaling ratios $s^{-k}$,
$s^{a_{i}},$ and $s^{k-a_{i}}$. The proof of the last assertion is immediate:
tiles meet at images under similitudes of the set of overlap $\mathcal{O=\cup
}_{i\neq j}f_{i}(A)\cap f_{j}(A)$.

Equation (\ref{Tkformula2}) can be proved by induction on $l,$ starting from
Equation (\ref{Tkformula}) and using Lemma \ref{lemstruc2}.
\end{proof}

The following definition, formalizing the notion of an ``isometric combination
of tilings", will be used later, but it is convenient to place it here.

\begin{definition}
Let $\{U_{i}:i\in\mathcal{I\}}$ be a collection of tilings. An
\textbf{isometric combination of the set of tilings} $\{U_{i}:i\in
\mathcal{I\}}$ is a tiling $V$ that can be written in the form
\[
V={\bigsqcup_{i=1}^{K}}E^{(i)}U^{(i)}%
\]
for some $K\in\mathbb{N}$, where $E^{(i)}\in\mathcal{E}$, $U^{(i)}\in
\{U_{i}:i\in\mathcal{I\}}$, for all $i\in\{1,2,\dots,K\}.$
\end{definition}

For example, Lemma \ref{lemma02} tells us that any $T_{k}$ can be written as
an isometric combination of any set of tilings of the form $\{T_{j,}%
T_{j+1},\dots,T_{j+a_{\max}-1}\}$ when $k\geqslant j.$

\begin{proposition}
\label{lemmass} The sequence $\left\{  T_{k}\right\}  $ of tilings is
self-similar in the following sense. Each of the sets in the magnified tiling
$s^{-1}T_{k}$ is a union of tiles in $T_{k+1}$.
\end{proposition}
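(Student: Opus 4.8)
The plan is to unwind the definitions of $s^{-1}T_k$ and $T_{k+1}$ and match up the tiles. Recall $T_k = \{s^{-k}f_\sigma(A) : \sigma \in \Omega_k\}$, so $s^{-1}T_k = \{s^{-(k+1)}f_\sigma(A) : \sigma \in \Omega_k\}$, while $T_{k+1} = \{s^{-(k+1)}f_\tau(A) : \tau \in \Omega_{k+1}\}$. Thus a set in $s^{-1}T_k$ has the form $s^{-(k+1)}f_\sigma(A)$ for some $\sigma \in \Omega_k$, and I must show it is a union of sets of the form $s^{-(k+1)}f_\tau(A)$ with $\tau \in \Omega_{k+1}$. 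After stripping the common prefactor $s^{-(k+1)}$, this reduces to the purely symbolic claim that for each $\sigma \in \Omega_k$, the set $f_\sigma(A)$ equals $\bigcup\{f_\tau(A) : \tau \in \Omega_{k+1},\ \tau \text{ extends } \sigma\}$, together with the observation that the tiles so obtained are non-overlapping (which follows as in Lemma~\ref{lemma02} since distinct tiles meet only on images of the set of overlap $\mathcal{O}$, of strictly smaller Hausdorff dimension).

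The symbolic heart of the matter is Lemma~\ref{lemstruc2}: it gives $\Omega_{k+1} = \{\Omega_k \setminus \Omega_k'\} \sqcup \{\bigsqcup_{i=1}^N \Omega_k' i\}$, where $\Omega_k' = \{\omega : e(\omega) = k+1\}$. So I split into two cases according to whether $\sigma \in \Omega_k'$ or $\sigma \in \Omega_k \setminus \Omega_k'$. If $\sigma \in \Omega_k \setminus \Omega_k'$, then $\sigma$ itself lies in $\Omega_{k+1}$, and the corresponding tile is simply itself — a (one-element) union of tiles of $T_{k+1}$. If $\sigma \in \Omega_k'$, i.e.\ $e(\sigma) = k+1$, then for each $i \in [N]$ the word $\sigma i$ lies in $\Omega_k' i \subset \Omega_{k+1}$, and using the IFS relation $A = \bigcup_{i\in[N]} f_i(A)$ we get $f_\sigma(A) = f_\sigma\bigl(\bigcup_{i=1}^N f_i(A)\bigr) = \bigcup_{i=1}^N f_{\sigma i}(A)$, exhibiting the tile as a union of $N$ tiles of $T_{k+1}$. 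In both cases every $\tau \in \Omega_{k+1}$ appearing is accounted for exactly once, by the disjointness in Lemma~\ref{lemstruc2}, so the decomposition is consistent across all $\sigma \in \Omega_k$ and indeed partitions $T_{k+1}$.

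I should also confirm the hypothesis $k+1$ is in range — but there is no lower-bound restriction here since $\Omega_m$ is defined for all $m \in \mathbb{N}_0$ and Lemma~\ref{lemstruc2} holds for all $k \in \mathbb{N}_0$, so the argument works uniformly. The only genuinely substantive input beyond bookkeeping is Lemma~\ref{lemstruc2}, which is already established; the rest is applying the similitude $f_\sigma$ to the attractor equation and invoking the OSC-based non-overlap statement from Lemma~\ref{lemma02}.

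The main obstacle, such as it is, is purely presentational: making sure the two cases of Lemma~\ref{lemstruc2} are correctly translated through the coding map into statements about $f_\sigma(A)$ versus $f_{\sigma i}(A)$, and being careful that ``union of tiles in $T_{k+1}$'' is read in the generous sense that includes a single tile (the case $\sigma \in \Omega_k \setminus \Omega_k'$). There is no hard estimate or delicate limiting argument; once the dictionary between $\Omega_k$-words and tiles is in place, the proof is a two-line case split. I would therefore write it compactly, citing Lemma~\ref{lemstruc2} for the combinatorics, the attractor identity $A = \bigcup_i f_i(A)$ for the geometric subdivision, and Lemma~\ref{lemma02} (equivalently, the OSC) for non-overlap.
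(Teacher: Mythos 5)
Your proof is correct and follows essentially the same route as the paper: both rest on Lemma~\ref{lemstruc2}, with the tiles of $s^{-1}T_k$ that are isometric to $A$ (those with $e(\sigma)=k+1$) being split via $A=\bigcup_{i\in[N]}f_i(A)$ into tiles of $T_{k+1}$, and all other tiles carried over unchanged. Your write-up merely makes explicit the case split and indexing that the paper's one-line proof leaves implicit.
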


\begin{proof}
This follows at once from Lemma \ref{lemstruc2}. The tiling $T_{k+1}$ is
obtained from $T_{k}$ by applying the similitude $s^{-1}$ and then splitting
those resulting sets that are isometric to $A$. By splitting we mean we
replace $EA$ by $\{Ef_{1}(A),$ $Ef_{2}(A),\dots, Ef_{N}(A)\}$, see Section
\ref{strongsec}.
\end{proof}

\section{Theorem \ref{theoremONE}: Existence and Continuity of
Tilings\label{ProofofONE}}

Let
\[
A_{-\theta|k}:=f_{-\theta|k}A
\]
for all $\theta\in\lbrack N]^{\infty}$. It is immediate from Definition
\ref{defONE} that the support of the tiling $\pi(\theta|k)$ is $A_{-\theta|k}$
and that $\pi(\theta|k)$ is isometric to the tiling $T_{e(k)}$ of $A_{e(k)}$.
We use this fact repeatedly in the rest of this paper.

\begin{flushleft}
\textbf{Theorem~\ref{theoremONE}.} Each set $\pi(\theta)$ in $\mathbb{T}$ is a
tiling of a subset of $\mathbb{R}^{M}$, the subset being bounded when
$\theta\in[N]^{*}$ and unbounded when $\theta\in\lbrack N]^{\infty}$. For all
$\theta\in\lbrack N]^{\infty}$ the sequence of tilings $\left\{  \pi
(\theta|k)\right\}  _{k=0}^{\infty}$ is nested according to%
\begin{equation}
\{f_{i}(A):i\in\lbrack N]\}=\pi(\varnothing)\subset\pi(\theta|1)\subset
\pi(\theta|2)\subset\pi(\theta|3)\subset\cdots\text{ .}%
\end{equation}
For all $\theta\in\lbrack N]^{\infty}$, the prototile set for $\mathbb{\pi
}(\theta)$ is $\{s^{i}A:i=1,2,\cdots,a_{\max}\}$.\textit{ }Furthermore
\[
\pi:[N]^{\ast}\cup\lbrack N]^{\infty}\rightarrow\mathbb{T}%
\]
is a continuous map from the compact metric space $[N]^{\ast}\cup\lbrack
N]^{\infty}$ into the space $(\mathbb{K}({\mathbb{R}}^{M}),d_{\tau})$.
\end{flushleft}

\begin{proof}
Using Lemma \ref{lemma02}, for $\theta=\theta_{1}\theta_{2}\cdots\theta_{l}%
\in\lbrack N]^{\ast}$ and $\theta^{-}=\theta_{1}\theta_{2}\cdots\theta_{l-1}%
$,
\begin{align*}
\mathbb{\pi}(\theta)  &  =E_{\theta}T_{e(\theta)}={\bigsqcup_{i=1}^{N}%
}E_{\theta}E_{e(\theta),i}T_{k-a_{i}}\\
&  \supset E_{\theta}E_{e(\theta),\theta_{l}}T_{k-a_{\theta_{l}}}%
=E_{\theta^{-}}T_{e(\theta^{-})}=\mathbb{\pi}(\theta^{-})\text{.}%
\end{align*}
It follows that $\{\pi(\theta|k)\}$ is an increasing sequence of tilings for
all $\theta\in\lbrack N]^{\infty}$, as in Equation (\ref{eqthmONE}), and so
converges to a well-defined limit. Since the maps in the IFS are strict
contractions, their inverses are expansive, whence $\pi(\theta)$ is unbounded
for all $\theta\in\lbrack N]^{\infty}$.

The fact that the tiles here are indeed tiles as we defined them at the start
of this paper follows from three readily checked observations. (i) The tiles
are nonempty perfect compact sets because they are isometric to the attractor,
that is not a singleton, of an IFS of similitudes. (ii) There are only
finitely many tiles that intersect any ball of finite radius. (iii) Any two
tiles can meet only on a set that is contained in the image under a similitude
of the set of overlap.

Next we prove that there are exactly $a_{\max}$ distinct tiles, up to
isometry, in any tiling $\pi(\theta)$ for $\theta\in\lbrack N]^{\infty}$. The
tiles of $\pi(\theta)$ take the form $\{f_{_{-\theta|k}}f_{\sigma}%
(A):\sigma\in\Omega_{e(\theta|k)}\}$ for some $k\in\mathbb{N}$. The mappings
here are similitudes whose scaling factors are $\{s^{e(\sigma)-e(\theta
|k)}:e(\sigma)-e(\theta|k)>0\geq e(\sigma)-e(\theta|k)-a_{|\sigma|}\},$ namely
$\{s^{m}:m>0\geq m-a_{|\sigma|}\}$ for which the possible values are at most
all of $\{1,2,\dots,a_{\max}\}$. That all of these values occur for large
enough $k$ follows from $\gcd\{a_{i}:i=1,2,\dots, N\}=1$.

Next we prove that $\pi:[N]^{\ast}\cup\lbrack N]^{\infty}\rightarrow
\mathbb{T}$ is a continuous map from the compact metric space $[N]^{\ast}%
\cup\lbrack N]^{\infty}$ onto the space $(\mathbb{T},d_{T}).$ The map
$\pi|_{[N]^{\ast}}:[N]^{\ast}\rightarrow\mathbb{T}$ is continuous on the
discrete part of the space $([N]^{\ast},d_{[N]^{\ast}\cup\lbrack N]^{\infty}%
})$ because each point $\theta\in\lbrack N]^{\ast}$ possesses an open
neighborhood that contains no other points of $[N]^{\ast}\cup\lbrack
N]^{\infty}$. To show that $\pi$ is continuous at points of $[N]^{\infty}$ we
follow a similar method to the one in \cite{anderson}. Let $\varepsilon>0$ be
given and let $B(R)$ be the open ball of radius $R$ centered at the origin.
Choose $R$ so large that $h(\rho(\overline{B(R)}),\mathbb{S}^{M})<\varepsilon
$. This implies that if two tilings differ only where they intersect the
complement of $\overline{B(R)}$, then their distance $d_{\tau}$ apart is less
than $\varepsilon$. But geometrical consideration of the way in which
\textit{support(}$\pi(\theta_{1}\theta_{2}\theta_{3}..\theta_{k}))$ grows with
increasing $k$ shows that we can choose $K$ so large that \textit{support(}%
$\pi(\theta_{1}\theta_{2}\theta_{3}..\theta_{k}))\cap\overline{B(R)}$ is
constant for all $k\geq K$. It follows that%
\[
h(\rho(\pi(\theta_{1}\theta_{2}..\theta_{k})),\rho(\pi(\theta_{1}\theta
_{2}..\theta_{l})))\leq\varepsilon
\]
and as a consequence
\[
h(\rho(\partial\pi(\theta_{1}\theta_{2}..\theta_{k})),\rho(\partial\pi
(\theta_{1}\theta_{2}..\theta_{l})))\leq\varepsilon
\]
for all $k,l\geq K$. It follows that $h(\rho(\pi(\theta)),\rho(\pi
(\omega)))\leq\varepsilon)$ whenever $\theta_{1}\theta_{2}..\theta_{K}%
=\omega_{1}\omega_{2}..\omega_{K}$. It follows that $\pi$ is continuous.
\end{proof}

\section{\label{proofofTHREE}Theorem \ref{theoremTHREE}: When Do all Tilings
Repeat the Same Patterns?}

\begin{flushleft}
\textbf{Theorem 2.}
\end{flushleft}

\begin{enumerate}
\item Each unbounded tiling in $\mathbb{T}$ is quasiperiodic and all tilings
in $\mathbb{T}$ have the local isomorphism property.

\item If $\theta$ is eventually periodic, then $\pi(\theta)$ is self-similar.
In fact, if $\theta=\alpha\overline{\beta}$ for some $\alpha,\beta\in\left[
N\right]  ^{\ast},$ then $f_{-\alpha}f_{-\beta}\left(  f_{-\alpha}\right)
^{-1}$ is a self-similarity of $\pi(\theta)$.
\end{enumerate}

\begin{proof}
(1) First we prove quasiperiodicity. This is related to the self-similarity of
the sequence of tilings $\left\{  T_{k}\right\}  $ mentioned in Proposition
\ref{lemmass}.

Let $\theta\in\left[  N\right]  ^{\infty}$ be given and let $P$ be a patch in
$\pi(\theta)$. There is a $K_{1}\in\mathbb{N}$ such that $P$ is contained in
$\pi(\theta|K_{1})$. Hence an isometric copy of $P$ is contained in $T_{K_{2}%
}$ where $K_{2}=e(\theta|K_{1})$. Now choose $K_{3}\in\mathbb{N}$ so that an
isometric copy of $T_{K_{2}}$ is contained in each $T_{k}$ with $k\geq K_{3}.$
That this is possible follows from the recursion (\ref{Tkformula2}) of Lemma
\ref{lemma02} and \textit{gcd}$\{a_{i}\}=1$. In particular, $T_{K_{2}}\subset
T_{K_{3}+i}$ for all $i\in\{1,2,...,a_{\max}\}$.

Now let $K_{4}=K_{3}+a_{\max}$. Then, for all $k\geq K_{4}$, the tiling
$T_{k}$ is an isometric combination of $\{T_{K_{3}+i}:$\textit{ }%
$i=1,2,...,a_{\max}\}$, and each of these tilings contains a copy of
$T_{K_{2}}$ and in particular a copy of $P$.

Let $D=\max\{\left\Vert x-y\right\Vert :x,y\in A\}$ be the diameter of $A$.
The support of $T_{k}$ is $s^{-k}A$ which has diameter $s^{-k}D.$ Hence
\textit{support}$(T_{k})\subset$ $B(x,2s^{-k}D)$, the ball centered at $x$ of
radius $2s^{-k}D$, for all $x\in$ \textit{support}$(T_{k})$. It follows that
if $x\in$\textit{support}$\pi(\theta^{\prime})$ for any $\theta^{\prime}%
\in\left[  N\right]  ^{\infty}$, then $B(x,2s^{-K_{4}}D)$ contains a copy of
\textit{support}$(T_{K_{2}})$ and hence a copy of $P$. Therefore all unbounded
tilings in $\mathbb{T}$ are quasiperiodic.

In \cite{Rad} Radin and Wolff define a tiling to have the local ismorphism
property if for every patch $P$ in the tiling there is some distance $d(P)$
such that every sphere of diameter $d(P)$ in the tiling contains an isometric
copy of $P$. Above, we have proved a stronger property of tilings, as defined
here, of fractal blow-ups. Given $P,$ there is a distance $d(P)$ such that
each sphere of diameter $d(P),$ centered at any point belonging to the support
of any unbounded tiling in $\mathbb{T}$, contains a copy of $P$.

(2) Let $\theta=\alpha\overline{\beta}=\alpha_{1}\alpha_{2}\cdots\alpha
_{l}\beta_{1}\beta_{2}\cdots\beta_{m}\beta_{1}\beta_{2}\cdots\beta_{m}%
\beta_{1}\beta_{2}\cdots\beta_{m}\cdots$. We have the equivalent increasing
unions
\[
\pi(\theta)=%
{\textstyle\bigcup\limits_{k\in\mathbb{N}}}
E_{\theta|k}T_{e(\theta|k)}=%
{\textstyle\bigcup\limits_{j\in\mathbb{N}}}
E_{\theta|(l+jm)}T_{e(\theta|(l+jm))}=%
{\textstyle\bigcup\limits_{j\in\mathbb{N}}}
E_{\theta|(l+jm+m)}T_{e(\theta|(l+jm+m))}%
\]
where, for all $k$,
\[
E_{\theta|k}=f_{-\theta|k}s^{e(\theta|k)}\text{.}%
\]
We can write
\[
\pi(\theta)=%
{\textstyle\bigcup\limits_{j\in\mathbb{N}}}
E_{\theta|(l+jm)}T_{e(\theta|(l+jm))}=f_{-\alpha}%
{\textstyle\bigcup\limits_{j\in\mathbb{N}}}
f_{-\beta}^{j}s^{e(\theta|(l+jm))}T_{e(\theta|(l+jm))},
\]
and also
\[
\pi(\theta)=%
{\textstyle\bigcup\limits_{j\in\mathbb{N}}}
E_{\theta|(l+jm+m)}T_{e(\theta|(l+jm+m))}=f_{-\alpha}f_{-\beta}%
{\textstyle\bigcup\limits_{j\in\mathbb{N}}}
f_{-\beta}^{j}s^{e(\theta|(l+jm+m))}T_{e(\theta|(l+jm+m))}\text{.}%
\]

Here $f_{-\beta}^{j}s^{e(\theta|(l+jm+m))}T_{e(\theta|(l+jm+m))}$ is a
refinement of $f_{-\beta}^{j}s^{e(\theta|(l+jm))}T_{e(\theta|(l+jm))}$. It
follows that $\left(  f_{-\alpha}f_{-\beta}\right)  ^{-1}\pi(\theta)$ is a
refinement of $\left(  f_{-\alpha}\right)  ^{-1}\pi(\theta)$, from which it
follows that $\left(  f_{-\alpha}\right)  \left(  f_{-\alpha}f_{-\beta
}\right)  ^{-1}\pi(\theta)$ is a refinement of $\pi(\theta)$. Therefore, every
set in $\left(  f_{-\alpha}f_{-\beta}\right)  \left(  f_{-\alpha}\right)
^{-1}\pi(\theta)$ is a union of tiles in $\pi(\theta)$.
\end{proof}

\section{\label{realabsec} Relative and Absolute Addresses}

In order to understand how different tilings relate to one another, the
notions of relative and absolute addresses of tiles are introduced. Given an
IFS $\mathcal{F}$, the \textit{set of absolute addresses} is defined to be:
\[
\mathbb{A}:=\{\theta.\omega:\theta\in\lbrack N]^{\ast},\,\omega\in
\Omega_{e(\theta)},\,\theta_{\left\vert \theta\right\vert }\neq\omega_{1}\}.
\]
Define $\widehat{\pi}:\mathbb{A\rightarrow\{}t\in T:T\in\mathbb{T\}}$ by
\[
\widehat{\pi}(\theta.\omega)=f_{-\theta}.f_{\omega}(A).
\]
We say that $\theta.\omega$ is an \textit{absolute address} of the tile
$f_{-\theta}.f_{\omega}(A)$. It follows from Definition \ref{defONE} that the
map $\widehat{\pi}$ is surjective: every tile of $\mathbb{\{}t\in
T:T\in\mathbb{T\}}$ possesses at least one address. The condition
$\theta_{\left\vert \theta\right\vert }\neq\omega_{1}$ is imposed to make
cancellation unnecessary.

The \textit{set of relative addresses} is associated with the tiling $T_{k}$
of $A_{k}=s^{-k}A$ and is defined to be $\{.\omega:\omega\in\Omega_{k}\}$.

\begin{proposition}
\label{lembij}There is a bijection between the set of relative addresses
$\{.\omega:\omega\in\Omega_{k}\}$ and the tiles of $T_{k}$, for all
$k\in\mathbb{N}_{0}$.
\end{proposition}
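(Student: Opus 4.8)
The plan is to exhibit the natural map $.\omega \mapsto s^{-k}f_\omega(A)$ from relative addresses to tiles of $T_k$ and show it is both surjective and injective. Surjectivity is immediate from the definition $T_k = \{s^{-k}f_\sigma(A) : \sigma \in \Omega_k\}$: every tile is the image of some relative address $.\sigma$ with $\sigma \in \Omega_k$. So the entire content is injectivity: if $\omega, \omega' \in \Omega_k$ and $s^{-k}f_\omega(A) = s^{-k}f_{\omega'}(A)$, then $\omega = \omega'$. Equivalently, after cancelling the similitude $s^{-k}$, one must show $f_\omega(A) = f_{\omega'}(A)$ forces $\omega = \omega'$ for words in $\Omega_k$.

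The key step is to leverage the open set condition together with the structure of $\Omega_k$. First I would reduce to the case $|\omega| \le |\omega'|$ and argue that $f_\omega(A) = f_{\omega'}(A)$ already forces $\omega$ to be a prefix of $\omega'$ (or vice versa): if the two words first disagree at some position $j \le |\omega|$, then writing $\omega = \eta\, i\, \cdots$ and $\omega' = \eta\, i'\, \cdots$ with $i \ne i'$, cancelling the common prefix $f_\eta$ (a bijection of $\R^M$) gives $f_i(A') = f_{i'}(A'')$ where $A', A''$ are subsets of $A$ of full Hausdorff dimension $D_H = \dim_H A$; but $f_i(A') \subseteq f_i(A)$ and $f_{i'}(A'') \subseteq f_{i'}(A)$, so this set sits inside $f_i(A) \cap f_{i'}(A) \subseteq \mathcal{O}$, the set of overlap, which has Hausdorff dimension strictly less than $D_H$ by the OSC — a contradiction since $f_i(A')$ has dimension $D_H$. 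Hence $\omega$ is a prefix of $\omega'$. Then I would use the defining inequalities of $\Omega_k$: from $\omega, \omega' \in \Omega_k$ we have $e(\omega) > k \ge e^-(\omega)$ and $e(\omega') > k \ge e^-(\omega')$. If $\omega$ is a proper prefix of $\omega'$, then $e^-(\omega') \ge e(\omega) > k$ (since every $a_n \ge 1$, appending even one more symbol past all of $\omega$ makes $e^-$ reach at least $e(\omega)$), contradicting $e^-(\omega') \le k$. Therefore $\omega = \omega'$.

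The main obstacle is the first half of the injectivity argument — upgrading "$f_\omega(A) = f_{\omega'}(A)$" to "one word is a prefix of the other" — because it requires using the OSC in the correct dimensional form. The subtlety is that $A$ need not equal $f_i(A) \cup f_{i'}(A)$ alone, so one cannot naively say the pieces are disjoint; instead one argues about Hausdorff dimension: the attractor $A$ has a well-defined Hausdorff dimension $D_H$, each cylinder $f_\sigma(A)$ is a similar copy of $A$ hence also has dimension $D_H$, and the OSC guarantees $\dim_H \mathcal{O} < D_H$. The statement $f_\omega(A) = f_{\omega'}(A)$ with a genuine first disagreement would place a full-dimensional set inside $\mathcal{O}$, which is impossible. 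Once this dimensional argument is pinned down, the combinatorial finish using the $\Omega_k$ inequalities is routine. I would also remark that the same bijection, composed with the isometry $E_\theta = f_{-\theta} s^{e(\theta)}$ from Lemma~\ref{lemma02}, recovers a bijection between relative addresses and tiles of $\pi(\theta)$ for $\theta \in [N]^\ast$, which motivates the later discussion of absolute addresses.
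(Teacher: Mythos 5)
Your proof is correct, but it is a more hands-on route than the paper's. The paper's proof is essentially two lines: it invokes the non-overlapping union $A=\bigsqcup_{\omega\in\Omega_{k}}f_{\omega}(A)$, which it obtains by pushing the symbolic partition $[N]^{\infty}=\bigsqcup_{\omega\in\Omega_{k}}c(\omega)$ of Lemma~\ref{lemstruc3} through the coding map $\phi$ (as at the start of the proof of Lemma~\ref{lemma02}); injectivity of $.\omega\mapsto s^{-k}f_{\omega}(A)$ is then immediate, since two distinct indices giving the same tile would contradict the non-overlapping property, equal sets overlapping in full Hausdorff dimension. You instead prove injectivity from scratch, and your two steps are exactly the two ingredients the paper's citation conceals: your OSC dimension argument (a genuine first disagreement would force a full-dimensional similar copy of $A$ into an isometric image of the overlap set $\mathcal{O}$, whose dimension is strictly smaller) is the geometric content of the phrase ``non-overlapping union'', and your combinatorial step showing that no element of $\Omega_{k}$ is a proper prefix of another (via $e^{-}(\omega')\geq e(\omega)>k$) is precisely the uniqueness assertion inside the proof of Lemma~\ref{lemstruc3} (uniqueness of the $j$ with $\omega|j\in\Omega_{k}$). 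So the arguments are equivalent in substance: the paper's version is shorter because it reuses the symbolic lemma and the coding map, while yours is self-contained, avoids the coding map, and makes explicit exactly where the open set condition enters. Your closing remark, transporting the bijection by the isometry $E_{\theta}$ of Lemma~\ref{lemma02} to label tiles of $\pi(\theta)$, matches the paper's subsequent use of relative addresses.
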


\begin{proof}
This follows from the non-overlapping union
\[
A=%
{\textstyle\bigsqcup\limits_{\omega\in\Omega_{k}}}
f_{\omega}(A)\text{.}%
\]
This expression follows immediately from Lemma \ref{lemstruc3}; see the start
of the proof of Lemma \ref{lemma02}.
\end{proof}

Accordingly, we say that $.\omega$, or equivalently $\varnothing.\omega,$
where $\omega\in\Omega_{k}$, is \textit{the relative address} of the tile
$s^{-k}f_{\omega}(A)$ in the tiling $T_{k}$ of $A_{k}$. Note that a tile of
$T_{k}$ may share the same relative address as a different tile of $T_{l}$ for
$l\neq k$.

Define the \textit{set of labelled tiles} of $T_{k}$ to be%
\[
\mathcal{A}_{k}=\{(.\omega,s^{-k}f_{\omega}(A)):\omega\in\Omega_{k}\}
\]
for all $k\in\mathbb{N}_{0}$. A key point about relative addresses is that the
set of labelled tiles of $T_{k}$ for $k\in\mathbb{N}$ can be computed
recursively. Define
\[
\mathcal{A}_{k}^{^{\prime}}=\{(\omega,s^{-k}f_{\omega}(A))\in\mathcal{A}%
_{k}:e(\omega)=k+1\}\subset\mathcal{A}_{k}\text{.}%
\]
An example of the following inductive construction is illustrated in\ Figure
\ref{l-maps}, and some corresponding tilings $\pi(\theta)$ labelled by
absolute addresses are illustrated in Figure \ref{absolute}.

\begin{lemma}
\label{branchlem}For all $k\in\mathbb{N}_{0}$ we have%
\[
\mathcal{A}_{k+1}=\mathcal{L(A}_{k}\backslash\mathcal{A}_{k}^{^{\prime}}%
)\cup\mathcal{M(A}_{k}^{^{\prime}})
\]
where
\begin{align*}
\mathcal{L}(\omega,s^{-k}f_{\omega}(A))  &  =(\omega,s^{-k-1}f_{\omega}(A)),\\
\mathcal{M}(\omega,s^{-k}f_{\omega}(A))  &  =\big \{(\omega i,s^{-k-1}%
f_{\omega i}(A)):i\in\lbrack N]\big \}\text{.}%
\end{align*}

\end{lemma}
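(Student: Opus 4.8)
The plan is to recognize Lemma~\ref{branchlem} as nothing more than Lemma~\ref{lemstruc2} transported from the level of index words to the level of labelled tiles. Recall that by Lemma~\ref{lemstruc3} (see also Proposition~\ref{lembij}) the map $\omega\mapsto(.\omega,s^{-k}f_{\omega}(A))$ is a bijection from $\Omega_{k}$ onto $\mathcal{A}_{k}$, and that under this bijection the subset $\mathcal{A}_{k}^{\prime}$ corresponds exactly to $\Omega_{k}^{\prime}=\{\omega\in[N]^{\ast}:e(\omega)=k+1\}$, directly from the definition of $\mathcal{A}_{k}^{\prime}$. Likewise $\mathcal{A}_{k+1}$ is indexed by $\Omega_{k+1}$ via $\omega\mapsto(.\omega,s^{-(k+1)}f_{\omega}(A))$.

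First I would invoke Lemma~\ref{lemstruc2} to write $\Omega_{k+1}=(\Omega_{k}\setminus\Omega_{k}^{\prime})\sqcup\bigsqcup_{i=1}^{N}\Omega_{k}^{\prime}i$, and then chase each word of $\Omega_{k+1}$ through the indexing bijection. If $\omega\in\Omega_{k}\setminus\Omega_{k}^{\prime}$, then the corresponding labelled tile $(.\omega,s^{-(k+1)}f_{\omega}(A))\in\mathcal{A}_{k+1}$ is precisely $\mathcal{L}$ applied to $(.\omega,s^{-k}f_{\omega}(A))\in\mathcal{A}_{k}\setminus\mathcal{A}_{k}^{\prime}$. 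If instead the word of $\Omega_{k+1}$ has the form $\omega i$ with $\omega\in\Omega_{k}^{\prime}$ and $i\in[N]$, then $(.\omega i,s^{-(k+1)}f_{\omega i}(A))$ is one of the $N$ labelled tiles comprising $\mathcal{M}(.\omega,s^{-k}f_{\omega}(A))$, and $(.\omega,s^{-k}f_{\omega}(A))\in\mathcal{A}_{k}^{\prime}$. This establishes $\mathcal{A}_{k+1}\subseteq\mathcal{L}(\mathcal{A}_{k}\setminus\mathcal{A}_{k}^{\prime})\cup\mathcal{M}(\mathcal{A}_{k}^{\prime})$. The reverse inclusion is the same bookkeeping run backwards: any element of $\mathcal{L}(\mathcal{A}_{k}\setminus\mathcal{A}_{k}^{\prime})$ is indexed by a word of $\Omega_{k}\setminus\Omega_{k}^{\prime}\subseteq\Omega_{k+1}$, and any element of $\mathcal{M}(\mathcal{A}_{k}^{\prime})$ is indexed by a word $\omega i$ with $\omega\in\Omega_{k}^{\prime}$, which lies in $\Omega_{k+1}$ again by Lemma~\ref{lemstruc2}; in both cases the attached tile is $s^{-(k+1)}f_{(\cdot)}(A)$, so the labelled tile lies in $\mathcal{A}_{k+1}$.

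The only points that need a word of justification are that $\mathcal{L}$ and $\mathcal{M}$ are well defined on the indicated domains and produce genuine elements of $\mathcal{A}_{k+1}$ — that is, that $\omega\in\Omega_{k}\setminus\Omega_{k}^{\prime}$ forces $\omega\in\Omega_{k+1}$, and that $\omega\in\Omega_{k}^{\prime}$ forces $\omega i\in\Omega_{k+1}$ for every $i\in[N]$. But these are exactly the two inclusions proved inside Lemma~\ref{lemstruc2} (there one checks $e^{-}(\omega)\le k+1<e(\omega)$ in the first case, and $e(\omega i)=k+1+a_{i}>k+1=e^{-}(\omega i)$ in the second). Consequently there is no genuine obstacle here: the argument is a short verification once Lemma~\ref{lemstruc2} and the relative-address bijection of Proposition~\ref{lembij} are in hand. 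I would also remark in passing that the union on the right-hand side is in fact disjoint, mirroring the disjointness of the decomposition of $\Omega_{k+1}$, even though the statement only asserts set equality.
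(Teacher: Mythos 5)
Your proof is correct and is essentially the paper's argument: the paper simply says the lemma follows immediately from Lemma~\ref{lemstruc2}, and your write-up is the detailed translation of that decomposition of $\Omega_{k+1}$ to the level of labelled tiles (noting that the indexing by words is automatic since the address is carried as the first coordinate of each labelled tile). No gaps; your remark on disjointness is a harmless bonus.
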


\begin{proof}
This follows immediately from Lemma \ref{lemstruc2}.
\end{proof}

\section{\label{strongsec}Strong Rigidity, Definition of ``Amalgamation and
Shrinking" Operation $\alpha$ on Tilings, and Proof of Theorem
\ref{intersectthm}.}

We begin this key section by introducing an operation, called
\textquotedblleft amalgamation and shrinking", that maps certain tilings into
tilings. This leads to the main result of this section, Theorem
\ref{intersectthm}, which, in turn, leads to Theorem \ref{theoremTWO}.

\begin{definition}
\label{rigiddef} Let $T_{0}=\{f_{i}(A):i\in\left[  N\right]  \}$. The IFS
$\mathcal{F}$ is said to be \textbf{rigid} if (i) there exists no non-identity
isometry $E\in\mathcal{E}$ such that $T_{0}\cap ET_{0}$ is non-empty and tiles
$A\cap ET$, and (ii) there exists no non-identity isometry $E\in\mathcal{E}$
such that $A=EA$.
\end{definition}

\begin{definition}
Define $\mathbb{T}^{\prime}$ to be the set of all tilings using the set of
prototiles $\left\{  s^{i}A:i=1,2,...,a_{\max}\right\}  $. Any tile that is
isometric to $s^{a_{\max}}A$ is called a \textbf{small tile}, and any tile
that is isometric to $sA$ is called a \textbf{large tile}. We say that a
tiling $P\in\mathbb{T}^{\prime}$ comprises a set of \textbf{partners }if
$P=ET_{0}$ for some $E\in\mathcal{E}$. Define $\mathbb{T^{\prime\prime}\subset
T}^{\prime}$ to be the set of all tilings in $\mathbb{T}^{\prime}$ such that,
given any $Q\in\mathbb{T^{\prime\prime}}$ and any small tile $t\in Q$, there
is a set of partners of $t$, call it $P(t)$, such that $P(t)\subset Q$. Given
any $Q\in\mathbb{T^{\prime\prime}}$ we define $Q^{\prime}$ to be the union of
all sets of partners in $Q$.
\end{definition}

\begin{definition}
Let $\mathcal{F}$ be a rigid IFS. The amalgamation and shrinking operation
$\alpha:\mathbb{T^{\prime\prime}\rightarrow T}^{\prime}$ is defined by
\[
\alpha Q=\{st:t\in Q\backslash Q^{\prime}\}\cup%
{\displaystyle\bigsqcup_{\{E\in\mathcal{E}:ET_{0}\subset Q^{\prime}\}}}
sEA\text{.}%
\]

\end{definition}

\begin{lemma}
\label{inverselemma} If $\mathcal{F}$ is rigid, the function $\alpha
:\mathbb{T^{\prime\prime}\rightarrow T}^{\prime}$is well-defined and
bijective; in particular, $\alpha^{-1}:\mathbb{T}^{\prime}\rightarrow
\mathbb{T^{\prime\prime}}$ is well defined by
\[
\alpha^{-1}(Q)=\{\alpha_{Q}^{-1}(q):q\in Q\}
\]
where
\[
\alpha_{Q}^{-1}(q)=\left\{
\begin{array}
[c]{c}%
s^{-1}q\text{ if }q\in Q\text{ is not a large tile}\\
s^{-1}ET_{0}\text{ if }Eq\text{ is a large tile, some }E\in\mathcal{E}%
\end{array}
\right.
\]

\end{lemma}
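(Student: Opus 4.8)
The plan is to establish the three assertions of Lemma~\ref{inverselemma} in turn: that $\alpha$ is well-defined (lands in $\mathbb{T}'$), that the map $\alpha^{-1}$ displayed in the statement is well-defined, and that the two maps are mutually inverse (hence $\alpha$ is a bijection $\mathbb{T}'' \to \mathbb{T}'$).

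First I would check that $\alpha Q \in \mathbb{T}'$ for $Q \in \mathbb{T}''$. The tiles $st$ for $t \in Q \setminus Q'$ are copies of $s^{i+1}A$ with $s^iA$ a prototile of $Q$, so they have the form $s^jA$ with $2 \le j \le a_{\max}+1$; but $t \in Q \setminus Q'$ forces $t$ to not be a small tile (every small tile lies in some set of partners, hence in $Q'$), so $i \le a_{\max}-1$ and $st$ is a prototile of $\mathbb{T}'$. The terms $sEA$ arising from sets of partners $ET_0 \subset Q'$ are copies of $sA$, a legitimate prototile. The key point needing the rigidity hypothesis is that this is an honest tiling, i.e. the listed sets are pairwise non-overlapping and cover $\mathrm{support}(Q)$ scaled by... no — they cover $\mathrm{support}(\alpha Q)$, which equals $\mathrm{support}(Q)$ only after scaling is accounted for; more precisely one shows $\mathrm{support}(\alpha Q) = s\,\mathrm{support}(Q)$. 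Non-overlap of the $st$ among themselves follows from non-overlap in $Q$; non-overlap of $sEA$ with $sE'A$ or with $st$ reduces, after applying $s^{-1}$, to non-overlap of $EA$, $E'A$, $t$ inside $Q$, where $EA = \mathrm{support}(ET_0)$ and the $T_0$-decomposition is non-overlapping by the OSC. The subtle issue is that the decomposition of $Q'$ into sets of partners must be unique — that is, the collection $\{E \in \mathcal{E} : ET_0 \subset Q'\}$ must partition $Q'$ without ambiguity — and this is exactly where rigidity part (i) is used: if two distinct sets of partners $ET_0$, $E'T_0$ shared a tile, or if a union of partners could be regrouped differently, one would manufacture a non-identity isometry witnessing $T_0 \cap \tilde E T_0$ tiling $A \cap \tilde E A$. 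I would spell this out as the main lemma-within-the-proof.

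Next I would verify $\alpha^{-1}$ as displayed is well-defined on $\mathbb{T}'$. Given $Q \in \mathbb{T}'$ and a tile $q \in Q$: if $q$ is not a large tile, $s^{-1}q$ is a copy of $s^iA$ with $2 \le i \le a_{\max}$ still, a prototile; if $q$ is a large tile, $q = EsA$ for some $E$, and we replace it by $s^{-1}ET_0$, a set of copies of $sA, \dots, s^{a_{\max}}A$ — again legitimate prototiles, so the output lies in $\mathbb{T}'$. The only ambiguity is the choice of $E$ with $Eq$ a large tile; any two such $E$ differ by a symmetry of $sA$, i.e. $s^{-1}(\text{symmetry of } A)s$, and rigidity part (ii) forbids non-identity symmetries of $A$, so $E$ is unique and $s^{-1}ET_0$ is well-defined. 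Then one checks $\alpha^{-1}Q$ is non-overlapping and that every small tile in it lies in a set of partners (precisely the $s^{-1}ET_0$ just introduced, together with the observation that splitting a large tile of $Q$ produces exactly one set of partners), so $\alpha^{-1}Q \in \mathbb{T}''$.

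Finally, $\alpha \circ \alpha^{-1} = \mathrm{id}_{\mathbb{T}'}$ and $\alpha^{-1} \circ \alpha = \mathrm{id}_{\mathbb{T}''}$ are checked by unwinding definitions: applying $\alpha$ to $\alpha^{-1}Q$, the tiles $s^{-1}q$ (for $q$ not large) are not small in $\alpha^{-1}Q$ and not in any set of partners, so $\alpha$ scales them back to $q$; the sets of partners $s^{-1}ET_0$ get amalgamated and scaled to $sE T_0 \cdot$... to $EsA = q$, recovering exactly the large tiles; so we get $Q$ back. Conversely, starting from $Q \in \mathbb{T}''$, $\alpha Q$ consists of $st$ (non-partner tiles of $Q$) together with one large tile $sEA$ per set of partners $ET_0 \subset Q'$; applying $\alpha^{-1}$, the large tiles $sEA$ are each split back to $s^{-1}(sE)T_0 = ET_0$, and the $st$ (which are not large, since $t$ is not a small tile, so $st$ is not... wait: $t$ could be a large tile of $Q$, making $st$ a copy of $s^2A$, still not large when $a_{\max} \ge 2$) get scaled back to $t$, recovering $Q$. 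The main obstacle in the whole argument is the well-definedness in the forward direction: proving that the set $Q'$ admits a \emph{unique} partition into sets of partners, so that $\alpha Q$ does not depend on a choice — this is the heart of the matter and the sole place rigidity part (i) enters; everything else is bookkeeping with scaling ratios and the OSC.
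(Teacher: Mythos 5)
Your proposal is correct and takes essentially the same route as the paper's (much terser) proof: rigidity is used exactly as you say, part (i) to remove any ambiguity about which tiles form sets of partners so that $\alpha$ is well-defined, and part (ii) to make the splitting isometry $E$ (hence $\alpha^{-1}$) unique, after which the displayed formula gives the two-sided inverse. Your write-up merely fills in the bookkeeping (prototile sizes, non-overlap, and the mutual-inverse check) that the paper's two-sentence proof leaves implicit.
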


\begin{proof}
Because $\mathcal{F}$ is rigid, there can be no ambiguity with regard to which
sets of tiles in a tiling are partners, nor with regard to which tiles are the
partners of a given small tile. Hence $\alpha:\mathbb{T^{\prime\prime
}\rightarrow T}^{\prime}$ is well defined. Given any $T^{\prime}\in
\mathbb{T}^{\prime}$ we can find a unique $Q\in\mathbb{T^{\prime\prime}}$ such
that $\alpha(Q)=T^{\prime},$ namely $\alpha^{-1}(Q)$ as defined in the lemma.
\end{proof}

\begin{lemma}
\label{alphaTlem}Let $\mathcal{F}$ be rigid and $k\in\mathbb{N}$. Then

(i) $T_{k}\in\mathbb{T^{\prime\prime}}$;

(ii) $\alpha T_{k}=T_{k-1}$ and $\alpha^{-1}T_{k-1}=T_{k}$.
\end{lemma}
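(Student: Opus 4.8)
The plan is to prove the single identity $\alpha^{-1}T_{k-1}=T_{k}$; parts (i) and (ii) then follow formally from Lemma~\ref{inverselemma}. I would start with the elementary observation that $T_{j}\in\mathbb{T}'$ for every $j\in\mathbb{N}_{0}$: the tile $s^{-j}f_{\sigma}(A)$ with $\sigma\in\Omega_{j}$ is the image of $A$ under a similitude of ratio $s^{e(\sigma)-j}$, and the inequalities $e(\sigma)>j\ge e^{-}(\sigma)=e(\sigma)-a_{\sigma_{|\sigma|}}$ force $1\le e(\sigma)-j\le a_{\max}$, so $s^{-j}f_{\sigma}(A)$ is isometric to one of $sA,s^{2}A,\dots,s^{a_{\max}}A$. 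In particular $T_{k-1}\in\mathbb{T}'$, so $\alpha^{-1}T_{k-1}$ is defined. The same computation, applied inside $T_{k-1}$, shows that a tile $s^{-(k-1)}f_{\sigma}(A)$ is large (isometric to $sA$) exactly when $e(\sigma)-(k-1)=1$, i.e. when $\sigma\in\Omega_{k-1}'$ in the notation $\Omega_{k-1}'=\{\omega:e(\omega)=k\}$ of Lemma~\ref{lemstruc2}; here one also notes that the exponent detects the size, since $s\ne1$ forbids an isometry between $s^{i}A$ and $s^{j}A$ for $i\ne j$ (compare diameters), and $\Omega_{k-1}'\subset\Omega_{k-1}$ by Lemma~\ref{lemstruc2}.

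I would then apply $\alpha^{-1}$ to $T_{k-1}$ tile by tile, using the description from Lemma~\ref{inverselemma}: a non-large tile $q$ is replaced by $s^{-1}q$, and a large tile $q$ by the partner set whose support is $s^{-1}q$ (unique by rigidity (ii), since $E_{1}A=E_{2}A$ forces $E_{1}=E_{2}$). For $\sigma\in\Omega_{k-1}\setminus\Omega_{k-1}'$ this gives $s^{-1}\bigl(s^{-(k-1)}f_{\sigma}(A)\bigr)=s^{-k}f_{\sigma}(A)$. For $\omega\in\Omega_{k-1}'$ the crucial point is that $e(\omega)=k$ makes $s^{-k}f_{\omega}$ an isometry, so $s^{-1}\bigl(s^{-(k-1)}f_{\omega}(A)\bigr)=s^{-k}f_{\omega}(A)$ is a genuine isometric copy $(s^{-k}f_{\omega})(A)$ of $A$, and its partner set is $(s^{-k}f_{\omega})T_{0}=\{s^{-k}f_{\omega i}(A):i\in[N]\}$. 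Collecting the pieces, $\alpha^{-1}T_{k-1}=\{s^{-k}f_{\sigma}(A):\sigma\in\Omega_{k-1}\setminus\Omega_{k-1}'\}\cup\bigcup_{i=1}^{N}\{s^{-k}f_{\omega i}(A):\omega\in\Omega_{k-1}'\}$, whose index set is $(\Omega_{k-1}\setminus\Omega_{k-1}')\sqcup\bigsqcup_{i=1}^{N}\Omega_{k-1}'i=\Omega_{k}$ by Lemma~\ref{lemstruc2} (applied with $k$ there replaced by $k-1$). Hence $\alpha^{-1}T_{k-1}=\{s^{-k}f_{\sigma}(A):\sigma\in\Omega_{k}\}=T_{k}$.

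To finish: $T_{k-1}\in\mathbb{T}'$ and $\alpha^{-1}:\mathbb{T}'\to\mathbb{T}''$ is well defined (Lemma~\ref{inverselemma}), so $T_{k}=\alpha^{-1}T_{k-1}\in\mathbb{T}''$, which is (i) and the second identity of (ii); applying the bijection $\alpha$ and using $\alpha\circ\alpha^{-1}=\mathrm{id}$ yields $\alpha T_{k}=T_{k-1}$, the first identity of (ii).

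The step I expect to be the real work is the middle one: lining up the purely symbolic recursion $\Omega_{k}=(\Omega_{k-1}\setminus\Omega_{k-1}')\sqcup\bigsqcup_{i}\Omega_{k-1}'i$ with the geometric expand-and-split action of $\alpha^{-1}$. The two things that must be got right are (a) identifying the large tiles of $T_{k-1}$ purely from the exponent $e(\sigma)-(k-1)$, and (b) checking that splitting such a large tile reproduces precisely the tiles $s^{-k}f_{\omega i}(A)$ occurring in $T_{k}$ — which works only because $e(\omega)=k$ turns $s^{-k}f_{\omega}$ into an isometry, so $s^{-k}f_{\omega}(A)$ really is an isometric copy of the attractor on which $T_{0}$ can be grafted. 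Rigidity itself is used only indirectly, through Lemma~\ref{inverselemma}, to make the partner/splitting structure unambiguous.
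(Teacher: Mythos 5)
Your proof is correct and takes essentially the same route as the paper: the paper's terse proof identifies $\alpha^{-1}$ with the scale-and-split recursion of Lemma~\ref{branchlem}, which is exactly the symbolic recursion $\Omega_{k}=(\Omega_{k-1}\setminus\Omega_{k-1}')\,\sqcup\,\bigsqcup_{i=1}^{N}\Omega_{k-1}'i$ of Lemma~\ref{lemstruc2} that you invoke directly. Your write-up merely supplies the details the paper leaves implicit (detecting large tiles from the exponent $e(\sigma)-(k-1)$, noting $s^{-k}f_{\omega}$ is an isometry when $e(\omega)=k$, and deducing (i) from the codomain of $\alpha^{-1}$ in Lemma~\ref{inverselemma}).
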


\begin{proof}
As described in Lemma \ref{branchlem}$,$ $T_{k}$ can constructed in a
well-defined manner, starting from from $T_{k-1}$, by scaling and splitting,
that is, by applying $\alpha^{-1}$. Conversely $T_{k-1}$ can be constructed
from $T_{k}$ by applying $\alpha$. Statements (i) and (ii) are consequences.
\end{proof}

\begin{lemma}
\label{srinterlem} If $\mathcal{F}$ is rigid, $L,M\in\mathbb{T^{\prime\prime}%
}$, and $L\cap M$ tiles support($L)\,\cap\,$support($M),$ then $L\, \cap\,
M\in\mathbb{T^{\prime\prime}}$. Moreover,
\[
\alpha(L\cap M)=\alpha(L)\cap\alpha(M),
\]
and $\alpha(L\cap M)$ tiles support$\, \alpha(L)\, \cap\, $support$\,
\alpha(M)$.
\end{lemma}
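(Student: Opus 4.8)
The plan is to prove the three assertions of Lemma~\ref{srinterlem} in sequence, relying on the rigidity of $\mathcal{F}$ to make the amalgamation operation $\alpha$ behave like a homomorphism on intersections. Throughout, write $L' , M'$ for the unions of sets of partners in $L,M$ as in the definition preceding Lemma~\ref{inverselemma}, and recall from Lemma~\ref{inverselemma} that, because $\mathcal{F}$ is rigid, the decomposition of a tiling in $\mathbb{T}''$ into partner-sets and ``loose'' tiles is unique. The first step is to show $L\cap M\in\mathbb{T}''$. Let $t$ be a small tile of $L\cap M$. Since $L\cap M$ tiles $\mathrm{support}(L)\cap\mathrm{support}(M)$, the tile $t$ lies in both $L$ and $M$. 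Using $L\in\mathbb{T}''$, there is a set of partners $P(t)=ET_{0}\subset L$ containing $t$; similarly there is a set of partners $P'(t)=E'T_{0}\subset M$ containing $t$. By rigidity (part (i) of Definition~\ref{rigiddef}, which forbids a non-identity isometry carrying one partner configuration onto an overlapping one), the set of partners of a given small tile is unique, so $ET_{0}=E'T_{0}$; hence this common partner-set lies in $L\cap M$, and $L\cap M\in\mathbb{T}''$.

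Next I would prove $\alpha(L\cap M)=\alpha(L)\cap\alpha(M)$. For the inclusion $\subseteq$: a tile of $\alpha(L\cap M)$ is either $st$ for some non-partner tile $t\in (L\cap M)\setminus (L\cap M)'$, or $sEA$ for some partner-set $ET_{0}\subset (L\cap M)'$. In the first case $t\in L$ and $t\in M$; one checks, using the uniqueness of partner-sets, that $t$ is either a non-partner tile of $L$ or part of a partner-set of $L$ that is not wholly inside $L\cap M$, and in either sub-case $st$ (or the amalgamated $sEA$ covering it) appears in $\alpha(L)$, and symmetrically in $\alpha(M)$. Actually the cleanest route is to argue via $\alpha^{-1}$ instead: by Lemma~\ref{inverselemma}, $\alpha$ is a bijection $\mathbb{T}''\to\mathbb{T}'$ with explicit inverse, and $\alpha^{-1}$ acts tile-by-tile, replacing each tile $q$ by $s^{-1}q$ (if not large) or by $s^{-1}ET_{0}$ (if $Eq$ is large). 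Since this replacement depends only on the isometry class of the individual tile $q$ and not on the surrounding tiling, $\alpha^{-1}$ commutes with intersection: $\alpha^{-1}(P\cap P')=\alpha^{-1}(P)\cap\alpha^{-1}(P')$ for $P,P'\in\mathbb{T}'$ whose intersection tiles the intersection of their supports. Applying this with $P=\alpha(L)$, $P'=\alpha(M)$ and using that $\alpha$ is a bijection then yields $\alpha(L\cap M)=\alpha(L)\cap\alpha(M)$, provided one first checks that $\alpha(L)\cap\alpha(M)$ tiles $\mathrm{support}(\alpha(L))\cap\mathrm{support}(\alpha(M))$ — which is the content of the third assertion.

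For that third assertion, note that the support of $\alpha(Q)$ is $s\cdot\mathrm{support}(Q)$: amalgamating partner-sets into single tiles and then shrinking by $s$ does not change the underlying point-set except for the global scaling by $s$ (a partner-set $ET_{0}$ has support $E(A)$, which is exactly the support of the amalgamated tile $EA$ before shrinking). Hence $\mathrm{support}(\alpha(L))\cap\mathrm{support}(\alpha(M)) = s\big(\mathrm{support}(L)\cap\mathrm{support}(M)\big)$, and since $L\cap M$ tiles $\mathrm{support}(L)\cap\mathrm{support}(M)$ by hypothesis, its image $\alpha(L\cap M)$ — a set of tiles with non-overlapping interiors covering $s\big(\mathrm{support}(L)\cap\mathrm{support}(M)\big)$ — tiles that set. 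Non-overlap is preserved because $\alpha$ only merges tiles that were already partners (interior-disjoint from everything outside their common amalgam) and then applies a similitude, and coverage is preserved because amalgamation plus shrinking is support-preserving up to the factor $s$. Combining, $\alpha(L\cap M)$ tiles $\mathrm{support}(\alpha(L))\cap\mathrm{support}(\alpha(M))$, which also legitimizes the intersection formula above.

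The main obstacle I anticipate is the bookkeeping in the middle step: showing that $\alpha$ respects intersections requires carefully matching up, tile-by-tile, the three possible statuses a tile can have (loose tile, small tile in a partner-set, or large/medium tile in a partner-set) across the two tilings $L$ and $M$, and ruling out — via rigidity — the pathological possibility that a partner-set of $L$ and a partner-set of $M$ overlap only partially. The uniqueness of partner-sets from rigidity (Definition~\ref{rigiddef}(i)) is exactly what closes this gap, and the slickest presentation is to push everything through the tile-local description of $\alpha^{-1}$ in Lemma~\ref{inverselemma} rather than working with $\alpha$ directly, so that ``$\alpha^{-1}$ commutes with $\cap$'' becomes almost immediate and the lemma follows by applying $\alpha$ to both sides.
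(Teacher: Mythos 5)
Your proposal takes essentially the same route as the paper: the paper writes $L=\alpha^{-1}L'$, $M=\alpha^{-1}M'$ and rests on exactly the observation you settle on at the end --- that $\alpha^{-1}$ acts tile-by-tile (Lemma \ref{inverselemma}) and therefore commutes with intersections that tile the intersection of supports --- concluding $L\cap M=\alpha^{-1}(L'\cap M')\in\mathbb{T}''$, $\alpha(L\cap M)=L'\cap M'=\alpha(L)\cap\alpha(M)$, and the support statement all in one stroke. Your extra direct arguments (uniqueness of the partner set of a small tile to get $L\cap M\in\mathbb{T}''$, and $\mathrm{support}\,\alpha(Q)=s\cdot\mathrm{support}(Q)$ for the third assertion) are consistent elaborations at the same level of rigor as the paper, which likewise invokes the commutation without separately verifying the tiling hypothesis for $\alpha(L)\cap\alpha(M)$.
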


\begin{proof}
Since $L,M\in\mathbb{T^{\prime\prime}\subset T}^{\prime}$ lie in the range of
$\alpha^{-1},$ we can find unique $L^{\prime},M^{\prime}\in\mathbb{T}^{\prime
}$ such that
\[
L=\alpha^{-1}L^{\prime}\text{ and }M=\alpha^{-1}M^{\prime}.
\]
Note that $\alpha^{-1}(T^{\prime})=\left\{  \alpha^{-1}(t):t\in T^{\prime
}\right\}  $ for all $T^{\prime}\in\mathbb{T}^{\prime}$, which implies that
$\alpha^{-1}$ commutes both with unions of disjoint tilings and also with
intersections of tilings whose intersections tile the intersections of their
supports. It follows that $L\cap M\in\mathbb{T^{\prime\prime}}$,
\begin{align*}
\alpha(L\cap M)  &  =\alpha(\alpha^{-1}L^{\prime}\cap\alpha^{-1}M^{\prime})\\
&  =\alpha(\alpha^{-1}(L^{\prime}\cap M^{\prime}))\\
&  =L^{\prime}\cap M^{\prime}\\
&  =\alpha\left(  L\right)  \cap\alpha\left(  M\right)  \text{,}%
\end{align*}
and support $\alpha(L\cap M)=$\, support $\, \alpha\left(  L\right)  \cap
\,$support $\alpha\left(  M\right)  $.
\end{proof}

\begin{definition}
\label{strongdef}$\mathcal{F}$ is \textbf{strongly rigid} if $\mathcal{F}$ is
rigid and whenever $i,j\in\{0,1,2,\dots,a_{\max}-1\},E\in\mathcal{E}$, and
$T_{i}\cap ET_{j}$ tiles $A_{i}\cap EA_{j}$, either $T_{i}\subset ET_{j}$ or
$T_{i}\supset ET_{j}.$
\end{definition}

Section \ref{exsec} contain a few examples of strongly rigid IFSs.

\begin{lemma}
\label{intersectlemma} Let $\mathcal{F}$ be strongly rigid, $k,l\in
\mathbb{N}_{0}$, and $E\in\mathcal{E}$.

(i) If $ET_{k}\cap T_{k}$ is nonempty and tiles $EA_{k}\cap A_{k},$ then
$E=id$.

(ii) If $EA_{k}\cap A_{k+l}$ is nonempty and $ET_{k}\cap T_{k+l}$ tiles
$EA_{k}\cap A_{k+l}$, then $ET_{k}\subset T_{k+l}$.
\end{lemma}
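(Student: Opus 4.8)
\textbf{Proof proposal for Lemma \ref{intersectlemma}.}

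The plan is to reduce both statements to the strong rigidity hypothesis (Definition \ref{strongdef}) by ``shrinking'' the tilings $T_k$ down to one of the finitely many tilings $T_0, T_1, \dots, T_{a_{\max}-1}$ using the amalgamation operator $\alpha$ of Lemma \ref{alphaTlem}. For part (i), write $k = qa_{\max} + r$ with $0 \le r < a_{\max}$ (or more precisely iterate $\alpha$ exactly $k$ times if $k < a_{\max}$, otherwise $k - r$ times, so as to land in $\{0,1,\dots,a_{\max}-1\}$). Since $\mathcal{F}$ is rigid, Lemma \ref{alphaTlem}(ii) gives $\alpha^j T_k = T_{k-j}$ for $0 \le j \le k$, and Lemma \ref{srinterlem} tells us that $\alpha$ commutes with taking intersections that tile the intersection of supports, so $\alpha^j(ET_k \cap T_k) = \alpha^j(ET_k) \cap \alpha^j(T_k)$, where $\alpha^j(ET_k) = E'T_{k-j}$ for an isometry $E'$ conjugate to $E$ by the shrinking similitude $s^j$. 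One must check that the hypothesis ``$ET_k \cap T_k$ tiles $EA_k \cap A_k$'' is preserved under each application of $\alpha$, which is exactly the last sentence of Lemma \ref{srinterlem}. After $j = k - r$ steps (with $r \in \{0,\dots,a_{\max}-1\}$) we obtain $E'T_r \cap T_r$ tiling $E'A_r \cap A_r$, so by Definition \ref{strongdef} either $E'T_r \subset T_r$ or $T_r \subset E'T_r$; in either case, since both have the same support $A_r$ (as $E'$ preserves diameters, $E'A_r$ has the same diameter as $A_r$, and containment of tilings with equal-diameter supports forces $E'A_r = A_r$ and $E'T_r = T_r$), we get $E'A_r = A_r$, so $E'$ is a symmetry of $A$, hence $E' = id$ by rigidity clause (ii). Unwinding the conjugation, $E = id$.

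For part (ii), the situation is similar but the two tilings have different scales, so I would apply $\alpha$ a total of $k + l - j$ times where $j$ is chosen so that $\min(k,k+l) = k$ shrinks into the range $\{0,\dots,a_{\max}-1\}$ — that is, iterate $\alpha$ exactly $k - r$ times with $r \in \{0,\dots,a_{\max}-1\}$ and $r \equiv k$ in the appropriate sense, landing $T_k \mapsto T_r$ and $T_{k+l} \mapsto T_{l'}$ where $l' = k + l - (k-r) = l + r$. One subtlety: we need $l' \le a_{\max} - 1$ as well, which is \emph{not} automatic, so instead I would first reduce $l$ modulo the structure by peeling off the top of $T_{k+l}$ using Lemma \ref{lemma02}, equation (\ref{Tkformula2}): write $T_{k+l} = \bigsqcup_{\omega \in \Omega_{l}} E_{k+l,\omega} T_{k}$ when $k+l \ge l + a_{\max}$, i.e. $k \ge a_{\max}$. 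Then $ET_k$, which has support $EA_k$ of the same diameter as $A_k = $ support$(T_k)$, must (having a nonempty intersection that tiles the overlap with $T_{k+l}$) be compatible with this decomposition; the key is that $EA_k$ cannot straddle two of the pieces $E_{k+l,\omega}A_k$ in a way that produces overlapping tiles, because each piece is an isometric copy of $T_k$ and any two such copies are non-overlapping. So $EA_k$ must be forced into a \emph{single} piece $E_{k+l,\omega}A_k$, i.e. $ET_k$ and $E_{k+l,\omega}T_k$ have intersection tiling $EA_k \cap E_{k+l,\omega}A_k$ with both of the same diameter; applying part (i) (after conjugating $E_{k+l,\omega}^{-1}E$ into place) yields $E = E_{k+l,\omega}$, hence $ET_k = E_{k+l,\omega}T_k \subset T_{k+l}$. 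The small cases $k < a_{\max}$ are handled directly: here $k, k+l$ may not both fit the recursion, but if also $k + l < a_{\max}$ then strong rigidity applies to $T_k \cap ET_{k+l}$ — wait, one needs the indices in Definition \ref{strongdef} which allows $i \ne j$; indeed strong rigidity as stated covers $T_i \cap ET_j$ for all $i,j \in \{0,\dots,a_{\max}-1\}$, so this case is immediate, and otherwise $k+l \ge a_{\max} > k$ forces $k \ge 1$ and one can still apply the $\alpha$-reduction to bring $k+l$ down while $k$ stays put until $k+l < a_{\max}$ is reached or $k = $ the reduced index.

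The main obstacle I anticipate is the bookkeeping in part (ii): making sure that after the reductions the pair of indices actually lands in the range $\{0,1,\dots,a_{\max}-1\}^2$ where Definition \ref{strongdef} applies, and — more delicately — verifying that the hypothesis ``$ET_k \cap T_{k+l}$ tiles $EA_k \cap A_{k+l}$'' is genuinely preserved by $\alpha$ when the two tilings have \emph{different} supports (Lemma \ref{srinterlem} is stated for $L, M \in \mathbb{T}''$, so I must first check $ET_k$ and $T_{k+l}$ both lie in $\mathbb{T}''$, which follows from Lemma \ref{alphaTlem}(i) together with the observation that $\mathbb{T}''$ is invariant under isometries, and that $\alpha$ commutes with isometries in the sense that $\alpha(E \cdot L) = E \cdot \alpha(L)$ up to the conjugation already noted). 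Once the reduction machinery is set up cleanly, both (i) and (ii) fall out of Definition \ref{strongdef} together with the rigidity of $A$.
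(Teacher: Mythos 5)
Your part (i) is essentially the paper's own argument: shrink the pair with $\alpha$ (Lemmas \ref{alphaTlem} and \ref{srinterlem}, using that $\alpha$ conjugates isometries by $s$), land in the index range of Definition \ref{strongdef}, and finish with rigidity of $A$. The only flaw there is the step ``containment of tilings with equal-diameter supports forces $E'A_r=A_r$'': equal diameter plus containment does not imply equality of sets. It is easily patched — $E'T_r$ and $T_r$ each consist of exactly $|\Omega_r|$ tiles, so either containment from strong rigidity is already equality, whence $E'A_r=A_r$ and rigidity (ii), conjugated by $s^r$, gives $E'=id$ and hence $E=id$.

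Part (ii), however, has genuine gaps. First, the decomposition you invoke, $T_{k+l}=\bigsqcup_{\omega\in\Omega_{l}}E_{k+l,\omega}T_{k}$, is not what equation (\ref{Tkformula2}) gives: the pieces there are $E_{k+l,\omega}T_{k+l-e(\omega)}$ with $e(\omega)$ ranging over $\{l+1,\dots,l+a_{\max}\}$, i.e.\ isometric copies of $T_{k-1},\dots,T_{k-a_{\max}}$, and in general $T_{k+l}$ admits no decomposition into copies of $T_k$ alone. Second, and more seriously, your claim that $EA_k$ ``cannot straddle two pieces'' and so must sit inside a single piece is precisely the nontrivial content of the lemma; it does not follow from the pieces being pairwise non-overlapping, and note that the hypothesis does not even give $EA_k\subset A_{k+l}$ — ruling out partial overlap, including $EA_k$ protruding outside $A_{k+l}$, is exactly what strong rigidity is needed for. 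The paper sidesteps this by applying $\alpha^{k}$ (the smaller index) to the pair, so the smaller tiling becomes a single partner set $\widetilde{E}T_0$, then writing the reduced larger tiling $T_{l-k}$ as an isometric combination of $\{T_0,\dots,T_{a_{\max}-1}\}$ via (\ref{Tkformula2}) with $l'=k'-a_{\max}$, and invoking Definition \ref{strongdef} on $\widetilde{E}T_0$ against one piece $E'T_m$: the localization into a single piece is the \emph{conclusion} of strong rigidity, not an input to be justified by elementary non-overlap considerations. Third, your fallback ``bring $k+l$ down while $k$ stays put'' is not an available operation: the reduction of Lemma \ref{srinterlem} applies $\alpha$ to both members of the intersecting pair simultaneously, lowering both indices by one; shrinking one tiling while fixing the other does not preserve the hypothesis that the common tiles tile the intersection of supports. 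So the reduction machinery must be organized as in the paper (reduce the smaller tiling to $T_0$, decompose the other), rather than trying to equalize the two indices and then quote part (i).
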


\begin{proof}
Suppose $ET_{k}\cap T_{l}\neq\varnothing$ and t.i.s. (tiles intersection of
supports). Without loss of generality assume $k\leq l$, for if not, then apply
$E^{-1}$, then redefine $E^{-1}$ as $E$.

Both $ET_{k}$ and $T_{l}$ lie in the domain of $\alpha^{k}$, so we can apply
Lemma \ref{srinterlem} $k$ times, yielding
\begin{align}
\alpha^{k}(ET_{k}\cap T_{l})  &  =s^{k}Es^{-k}T_{0}\cap T_{l-k} \label{aboveq}%
\\
&  :=\widetilde{E}T_{0}\cap T_{l-k}\neq\varnothing,\nonumber
\end{align}
where $\widetilde{E}T_{0}\cap T_{l-k}$ t.i.s. Now observe that by Lemma
\ref{lemma02} we can write, for all $k^{\prime}\geq l^{\prime}+a_{\max}$,%
\[
T_{k^{\prime}}={\bigsqcup_{\omega\in\Omega_{l^{\prime}}}}E_{k^{\prime},\omega
}T_{k^{\prime}-e(\omega)}\left(  =\left\{  E_{k^{\prime},\omega}T_{k^{\prime
}-e(\omega)}:\omega\in\Omega_{l^{\prime}}\right\}  \right)  ,
\]
where $E_{k^{\prime},\omega}\in\mathcal{E}$ for all $k^{\prime},\omega$.
Choosing $l^{\prime}=k^{\prime}-a_{\max}$ and noting that, for $\omega
\in\Omega_{l^{\prime}}$, we have $e(\omega)\in\{l^{\prime}+1,\dots,l^{\prime
}+a_{\max}\},$ and for $\omega\in\Omega_{k^{\prime}-a_{\max}}$ we have
$e(\omega)\in\{k^{\prime}-a_{\max}+1,\dots,k^{\prime}\}.$ Therefore
$k^{\prime}-e(\omega)\in\{0,1,\dots,a_{\max}-1\}$ and we obtain the explicit
representation%
\[
T_{k^{\prime}}={\bigsqcup_{\omega\in\Omega_{k^{\prime}-a_{\max}}}}%
E_{k^{\prime},\omega}T_{k^{\prime}-e(\omega)}%
\]
which is an isometric combination of $\{T_{0},T_{1},\dots,T_{a_{\max}-1}\}$.
In particular, we can always reexpress $T_{l-k}$ in (\ref{aboveq}) as
isometric combination of $\{T_{0},T_{1},\dots,T_{a_{\max}-1}\}$ and so there
is some $E^{\prime}$ and some $T_{m}\in\{T_{0},T_{1},\dots,T_{a_{\max}-1}\}$
such that
\[
\widetilde{E}T_{0}\cap E^{\prime}T_{m}\neq\varnothing\text{ and t.i.s.}%
\]

By the strong rigidity assumption, this implies $\widetilde{E}T_{0}\subset
E^{\prime}T_{m}$, which in turn implies
\[
\widetilde{E}T_{0}\subset T_{l-k}%
\]
and t.i.s. Now apply $\alpha^{-k}$ to both sides of this last equation to
obtain the conclusions of the lemma.
\end{proof}

\begin{flushleft}
\textbf{Theorem \ref{intersectthm}.} Let $\mathcal{F}$ be strongly rigid. If
$\theta,\theta^{\prime}\in\lbrack N]^{\ast}$and $E\in\mathcal{E}$ are such
that $\pi(\theta)\cap E\pi(\theta^{\prime})$ is not empty and tiles
$A_{-\theta}\cap EA_{-\theta^{\prime}}$, then either $\pi(\theta)\subset
E\pi(\theta^{\prime})$ or $E\pi(\theta^{\prime})\subset\pi(\theta)$. In this
situation, if $e(\theta)=e(\theta^{\prime}),$ then $E\pi(\theta^{\prime}%
)=\pi(\theta).$
\end{flushleft}

\begin{proof}
This follows from Lemma \ref{intersectlemma}. If $\theta,\theta^{\prime}%
\in\lbrack N]^{\ast}$and $E\in\mathcal{E}$ are such that $\pi(\theta)\cap
E\pi(\theta^{\prime})$ is not empty and tiles $A_{-\theta}\cap EA_{-\theta
^{\prime}}$, then $\theta,\theta^{\prime}\in\lbrack N]^{\ast}$and
$E\in\mathcal{E}$ are such that $E_{\theta}T_{e(\theta)}\cap EE_{\theta
^{\prime}}T_{e(\theta^{\prime})}$ is not empty and tiles $E_{\theta
}A_{e(\theta)}\cap EE_{\theta^{\prime}}A_{e(\theta^{\prime})}$, where
$E_{\theta}=f_{-\theta}s^{e(\theta)}$ and $E_{\theta^{\prime}}=f_{-\theta
^{\prime}}s^{e(\theta^{\prime})}$ are isometries$.$ Assume, without loss of
generality, that $e(\theta)\leq e(\theta^{\prime})$ and apply $E_{\theta
^{\prime}}^{-1} E^{-1}$ to obtain that $\theta,\theta^{\prime}\in\lbrack
N]^{\ast}$ and $E^{\prime}=E_{\theta^{\prime}}^{-1}E^{-1} E_{\theta}%
\in\mathcal{E}$ are such that $E^{\prime}T_{e(\theta)}\cap T_{e(\theta
^{\prime})}$ is not empty and tiles $E^{\prime}A_{e(\theta)}\cap
A_{e(\theta^{\prime})}.$ By Lemma \ref{intersectlemma} it follows that
$E^{\prime}T_{e(\theta)}\subset T_{e(\theta^{\prime})}$, i.e. $E_{\theta
^{\prime}}^{-1}E^{-1}E_{\theta}T_{e(\theta)}\subset T_{e(\theta^{\prime})},$
i.e. $\pi(\theta)\subset E\pi(\theta^{\prime}).$ If also $e(\theta^{\prime
})\leq e(\theta)$ (i.e. $e(\theta^{\prime})=e(\theta)$), then also
$E\pi(\theta^{\prime})\subset\pi(\theta)$. Therefore $E\pi(\theta^{\prime
})=\pi(\theta).$
\end{proof}

\section{\label{proofofTWO}Theorem \ref{theoremTWO}: When is a Tiling
Non-Periodic?}

\begin{flushleft}
\textbf{Theorem \ref{theoremTWO}.} \textit{If }$F$\textit{ is strongly rigid,
then there does not exist any non-identity isometry }$E\in\mathcal{E}$\textit{
and }$\theta\in\lbrack N]^{\infty}$\textit{ such that }$E\pi(\theta)\subset
\pi(\theta)$\textit{.}
\end{flushleft}

\begin{proof}
Suppose there exists an isometry $E$ such that $E\pi(\theta)=\pi(\theta).$
Then we can choose $K\in\mathbb{N}_{0}$ so large that $E\pi(\theta|K)\cap
\pi(\theta|K)\neq\varnothing$ and $E\pi(\theta|K)\cap\pi(\theta|K)$ tiles
$EA_{-\theta|K}\cap A_{-\theta|K}.$ By Theorem \ref{intersectthm} it follows
that%
\[
E\pi(\theta|K)=\pi(\theta|K)
\]
This implies
\[
EE_{\theta}T_{e\left(  \theta|K\right)  }=E_{\theta}T_{e\left(  \theta
|K\right)  }%
\]
whence, because $E_{\theta}T_{e\left(  \theta|K\right)  }$ is in the domain of
$\alpha^{e\left(  \theta|K\right)  }$ and $\alpha^{e\left(  \theta|K\right)
}T_{e\left(  \theta|K\right)  }=T_{0},$ we have by Lemma \ref{alphaTlem}
\begin{align*}
\alpha^{e\left(  \theta|K\right)  }E  &  E_{\theta}T_{e\left(  \theta
|K\right)  } =\alpha^{e\left(  \theta|K\right)  }E_{\theta}T_{e\left(
\theta|K\right)  }\\
&  \Longrightarrow s^{e\left(  \theta|K\right)  }EE_{\theta}s^{-e\left(
\theta|K\right)  }\alpha^{e\left(  \theta|K\right)  }T_{e\left(
\theta|K\right)  }=s^{e\left(  \theta|K\right)  }E_{\theta}s^{-e\left(
\theta|K\right)  }\alpha^{e\left(  \theta|K\right)  }T_{e\left(
\theta|K\right)  }\\
&  \Longrightarrow s^{e\left(  \theta|K\right)  }EE_{\theta}s^{-e\left(
\theta|K\right)  }T_{0}=s^{e\left(  \theta|K\right)  }E_{\theta}s^{-e\left(
\theta|K\right)  }T_{0}\\
&  \Longrightarrow s^{e\left(  \theta|K\right)  }EE_{\theta}s^{-e\left(
\theta|K\right)  }=s^{e\left(  \theta|K\right)  }E_{\theta}s^{-e\left(
\theta|K\right)  }\text{ (using rigidity)}\\
&  \Longrightarrow E=id\text{.}%
\end{align*}

\end{proof}

It follows that if $\mathcal{F}$ is strongly rigid, then $\pi(\theta)$ is
non-periodic for all $\theta$.

\section{\label{invertsec}When is $\pi:[N]^{\ast}\cup\lbrack N]^{\infty
}\rightarrow\mathbb{T}$ invertible?}

\begin{lemma}
\label{noninjlem}For all $\mathcal{F}$ the restricted mapping $\pi|_{\left[
N\right]  ^{\ast}\text{.}}:[N]^{\ast}\rightarrow\mathbb{T}$ is injective.
\end{lemma}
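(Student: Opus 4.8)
The plan is to show directly that if $\theta, \theta' \in [N]^{\ast}$ with $\pi(\theta) = \pi(\theta')$, then $\theta = \theta'$. The key observation is that the support of $\pi(\theta)$ is the bounded set $A_{-\theta} = f_{-\theta}(A)$, which is similar to $A$ with scaling ratio $s^{-e(\theta)}$. Since equal tilings have equal supports, $f_{-\theta}(A) = f_{-\theta'}(A)$; comparing diameters (and using that $A$ is not a singleton, so $\mathrm{diam}(A) > 0$) forces $s^{-e(\theta)} = s^{-e(\theta')}$, hence $e(\theta) = e(\theta')$. So it suffices to handle the case $e(\theta) = e(\theta') =: k$.

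Now by Lemma \ref{lemma02} we have $\pi(\theta) = E_\theta T_k$ and $\pi(\theta') = E_{\theta'} T_k$ where $E_\theta = f_{-\theta} s^{k}$ and $E_{\theta'} = f_{-\theta'} s^{k}$ are isometries. From $E_\theta T_k = E_{\theta'} T_k$ I would deduce that $E := E_{\theta'}^{-1} E_\theta$ is an isometry fixing the tiling $T_k$, i.e. $E T_k = T_k$. The aim is to conclude $E = id$, and hence $E_\theta = E_{\theta'}$, i.e. $f_{-\theta} = f_{-\theta'}$; since distinct words in $[N]^{\ast}$ of the same value $k$ need not a priori give distinct maps $f_{-\theta}$, I will need one more step, but first I must get $E = id$. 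To see this, note that any self-isometry of a bounded tiling permutes its (finitely many) tiles; in particular it maps the set of tiles of a given size to itself, and by examining how the smallest tiles sit inside $T_k$ together with the open set condition (the overlap set has strictly smaller Hausdorff dimension), an isometry of $T_k$ must fix the support $A_k$ setwise, so $E A_k = A_k$, i.e. $E A = A$.

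At this point the hypothesis of Lemma \ref{noninjlem} is ``for all $\mathcal{F}$'', so I cannot invoke rigidity (condition (ii) of Definition \ref{rigiddef}, $EA = EA \Rightarrow E = id$, is exactly what rigidity would give). The honest route is therefore more combinatorial: I will argue by induction on $k$ using the recursive/branching structure. For $k = 0$ we have $T_0 = \{f_i(A) : i \in [N]\}$ and $\pi(\theta) = \pi(\theta')$ with $e(\theta)=e(\theta')=0$ forces $\theta = \theta' = \varnothing$. For the inductive step, I would use that $\pi(\theta) \supset \pi(\theta^-)$ with exactly one tile of $\pi(\theta^-)$ — namely $f_{-\theta}f_{\theta_{|\theta|}}(A) = f_{-\theta^-}(A)$, the whole support of $\pi(\theta^-)$ — being subdivided into $N$ pieces when passing from $\pi(\theta^-)$ to $\pi(\theta)$; equivalently, $\mathrm{support}\,\pi(\theta^-) = f_{-\theta^-}(A)$ is intrinsically recoverable from $\pi(\theta)$ as the unique ``large" subtile that is a union of $N$ tiles forming a scaled copy of $T_0$ at the ``outermost" level, and applying $f_{\theta^-}$ (equivalently reading off which generator $\theta_{|\theta|}$ was used) recovers $\theta_{|\theta|}$ and reduces $\pi(\theta)$ to $\pi(\theta^-)$. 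Then $\pi(\theta) = \pi(\theta')$ implies $\pi(\theta^-) = \pi(\theta'^-)$ and $\theta_{|\theta|} = \theta'_{|\theta'|}$, and the induction closes.

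The main obstacle I anticipate is making precise and rigorous the claim that the ``last subdivision step" is canonically recoverable from the tiling $\pi(\theta)$ alone, independently of any rigidity assumption — i.e. that the support of $\pi(\theta^-)$ sits inside $\mathrm{support}\,\pi(\theta)$ in a position determined purely by the geometry of the tiling, not by the address. One clean way to sidestep this is to note $\mathrm{support}\,\pi(\theta) = f_{-\theta}(A) = f_{-\theta^-}(f_{\theta_{|\theta|}}^{-1}(A)) = f_{-\theta^-}(s^{-a_{\theta_{|\theta|}}} O_{\theta_{|\theta|}}^{-1}(A - q_{\theta_{|\theta|}}))$, so the support grows by a controlled similarity at each step; comparing $\mathrm{support}\,\pi(\theta)$ with $\mathrm{support}\,\pi(\theta')$ and using the structure of the nested sequence $F_0 \subseteq F_1 \subseteq \cdots$ from the remarks following Definition \ref{defONE}, one reads off the letters one at a time. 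I expect the write-up to lean on the observation that the difference set $\mathrm{support}\,\pi(\theta) \setminus \mathrm{support}\,\pi(\theta^-)$ determines $\theta_{|\theta|}$, which is where the open set condition does the real work.
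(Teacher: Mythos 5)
Your opening reduction is sound and coincides with the paper's: since $\mathrm{support}\,\pi(\theta)=f_{-\theta}(A)$ is similar to $A$ with ratio $s^{-e(\theta)}$, equal tilings force $e(\theta)=e(\theta')$, and via Lemma \ref{lemma02} the question becomes whether an isometry $E=E_{\theta'}^{-1}E_{\theta}$ satisfying $ET_{k}=T_{k}$ must be the identity. After this, however, the proposal does not close. The inductive step hinges on the claim that the last subdivision (equivalently the letter $\theta_{|\theta|}$, or $\mathrm{support}\,\pi(\theta^{-})$) can be read off canonically from the tiling $\pi(\theta)$ alone, and you explicitly leave this unproved; the proposed sidestep, that $\mathrm{support}\,\pi(\theta)\setminus\mathrm{support}\,\pi(\theta^{-})$ determines $\theta_{|\theta|}$, is false in general. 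Concretely, take $M=1$, $f_{1}(x)=x/2$, $f_{2}(x)=1-x/2$, so $A=[0,1]$ and the OSC holds; then $f_{1}^{-1}(A)=f_{2}^{-1}(A)=[0,2]$ and indeed $\pi(1)=\pi(2)=\{[0,\tfrac12],[\tfrac12,1],[1,\tfrac32],[\tfrac32,2]\}$. So no argument that recovers the letters from the geometry of the tiling can work for literally all $\mathcal{F}$: some hypothesis excluding symmetries (rigidity, or a condition in the spirit of Theorem \ref{1to1thm}) is unavoidable, exactly as you suspected when you noted that ``$ET_{k}=T_{k}\Rightarrow E=id$'' is a rigidity-type statement.

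For comparison, the paper's proof takes a different tack: it recovers $e(\theta)$ from the diameter, as you do, and then proves that $\theta\neq\theta'$ implies $f_{-\theta}\neq f_{-\theta'}$ (hence $E_{\theta}\neq E_{\theta'}$), by noting that $(f_{-\theta'})^{-1}f_{-\theta}$ carries $f_{\omega}(A)$ onto $f_{\omega'}(A)$, where $\omega,\omega'$ are the reversed words, and these are distinct pieces of the cylinder decomposition of $A$. That part is correct, but it only shows that $\theta\mapsto E_{\theta}$ is injective; it never rules out $E_{\theta}\neq E_{\theta'}$ with $E_{\theta}T_{k}=E_{\theta'}T_{k}$, which is precisely the symmetry issue you isolated, and which the example above shows is a genuine obstruction. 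So the gap in your write-up is real, but you have in fact put your finger on the step that the paper's own argument also leaves unaddressed; closing it requires an additional hypothesis on $\mathcal{F}$ rather than a cleverer unconditional argument.
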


\begin{proof}
To simplify notation, write $\pi=\pi|_{\left[  N\right]  ^{\ast}\text{.}}$ We
show how to calculate $\theta$ given $\pi\left(  \theta\right)  $ for
$\theta\in\left[  N\right]  ^{\ast}.$ By Lemma~\ref{lemma02} we have
$\pi(\theta)=E_{\theta}T_{e(\theta)}$, where $E$ is the isometry $f_{-\theta
}s^{e(\theta)}$. Given $\pi(\theta)$, we can calculate
\[
e(\theta)=\frac{\ln\left\vert A\right\vert -\ln\left\vert \pi(\theta
)\right\vert }{\ln s},
\]
where $\left\vert U\right\vert $ denotes the diameter of the set $U$.

We next show that if $E_{\theta}=E_{\theta^{\prime}}$ for some $\theta
\neq\theta^{\prime}$ with $e(\theta)=e(\theta^{\prime})$, then $\pi
(\theta)\neq\pi(\theta^{\prime})$. To do this, suppose that $E_{\theta
}=E_{\theta^{\prime}}$. This implies that $f_{-\theta}=f_{-\theta^{\prime}}$
which implies
\[
\left(  f_{-\theta^{\prime}}\right)  ^{-1}f_{-\theta}=id\text{,}%
\]
which is not possible when $\theta\neq\theta^{\prime},$ as we prove next. The
similitude $\left(  f_{-\theta^{\prime}}\right)  ^{-1}f_{-\theta}$ maps
$\left(  f_{-\theta}\right)  ^{-1}(A)\subset A$ to $\left(  f_{-\theta
^{\prime}}\right)  ^{-1}(A)\subset A$, and these two subsets of\ $A$ are
distinct for all $\theta,\theta^{\prime}\in\left[  N\right]  ^{\ast}$with
$\theta\neq\theta^{\prime}$, as we prove next.

Let $\omega,\omega^{\prime}$ denote the two strings $\theta,\theta^{\prime}$
written in inverse order, so that $\theta\neq\theta^{\prime}$ is equivalent to
$\omega\neq\omega^{\prime}$. First suppose $\left\vert \omega\right\vert
=\left\vert \omega^{\prime}\right\vert =m$ for some $m\in\mathbb{N}.$ Then
use
\[
A=%
{\displaystyle\bigsqcup\limits_{\omega\in\left[  N\right]  ^{m}}}
f_{\omega}(A),
\]
which tells us that $f_{\omega}(A)$ and $f_{\omega^{\prime}}(A)$ are disjoint.
Since $\left(  f_{-\theta^{\prime}}\right)  ^{-1}f_{-\theta}$ maps
\linebreak$\left(  f_{-\theta}\right)  ^{-1}(A)=f_{\omega}(A)$ to the distinct
set $\left(  f_{-\theta^{\prime}}\right)  ^{-1}(A)=f_{\omega^{\prime}}(A)$, we
must have $\left(  f_{-\theta^{\prime}}\right)  ^{-1}f_{-\theta}\neq id$.

Now suppose $\left\vert \omega\right\vert =m<\left\vert \omega^{\prime
}\right\vert =m^{\prime}.$ If both strings $\omega$ and $\omega^{\prime}$
agree through the first $m$ places, then $f_{\omega}(A)$ is a strict subset of
$f_{\omega^{\prime}}^{-1}(A)$ and again we cannot have $\left(  f_{-\theta
^{\prime}}\right)  ^{-1}f_{-\theta}=id$. If both strings $\omega$ and
$\omega^{\prime}$ do not agree through the first $m$ places, then let $p<m$ be
the index of their first disagreement. Then we find that $f_{\omega}(A)\ $is a
subset of $f_{\omega|p}(A)$, while $f_{\omega^{\prime}}(A)$ is a subset of the
set $f_{\omega^{\prime}|p}(A)$, which is disjoint from $f_{\omega|p}(A)$.
Since $\left(  f_{-\theta^{\prime}}\right)  ^{-1}f_{-\theta}$ maps $f_{\omega
}(A)$ to $f_{\omega^{\prime}}(A)$, we again have that $\left(  f_{-\theta
^{\prime}}\right)  ^{-1}f_{-\theta}\neq id$.
\end{proof}

We are going to need a key property of certain shifts maps on tilings, defined
in the next lemma.

\begin{lemma}
The mappings $S_{i}:\{\pi(\theta):\theta\in\lbrack N]^{l}\cup\lbrack
N]^{\infty},l\geq a_{i}\}\rightarrow\mathbb{T}^{\prime}$ for $i\in\lbrack N]$
are well-defined by%
\[
S_{i}=f_{i}s^{-a_{i}}\alpha^{a_{i}}\text{.}%
\]
It is true that%
\[
S_{\theta_{1}}\pi(\theta)=\pi(S\theta)
\]
for all $\theta\in\lbrack N]^{l}\cup\lbrack N]^{\infty}$ where $l\geq
a_{\theta_{1}}$.
\end{lemma}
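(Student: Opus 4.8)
The plan rests on three facts. First, by Lemma~\ref{lemma02}, $\pi(\theta)=E_{\theta}T_{e(\theta)}$ with $E_{\theta}=f_{-\theta}s^{e(\theta)}$ an isometry, and by Lemma~\ref{alphaTlem}, $T_{m}\in\mathbb{T}^{\prime\prime}$ with $\alpha T_{m}=T_{m-1}$ for $m\geq 1$ (and $T_{0}\in\mathbb{T}^{\prime\prime}$ as well, since its only sub-copy of $T_{0}$ is itself). Second, $\alpha$ is equivariant under isometries: for $E\in\mathcal{E}$ and $Q$ in the domain $\mathbb{T}^{\prime\prime}$ of $\alpha$, unwinding the definition of $\alpha$ — the sets of partners of $EQ$ are exactly the $E$-images of the sets of partners of $Q$, by rigidity — gives $\alpha(EQ)=(sEs^{-1})(\alpha Q)$, where $sEs^{-1}$ is again an isometry (same orthogonal part as $E$); iterating, $\alpha^{j}(EQ)=(s^{j}Es^{-j})(\alpha^{j}Q)$ provided $Q,\alpha Q,\dots,\alpha^{j-1}Q$ all lie in $\mathbb{T}^{\prime\prime}$. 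Third, for a nonempty word $\theta$, $f_{-\theta}=f_{\theta_{1}}^{-1}f_{-S\theta}$ and $e(\theta)=a_{\theta_{1}}+e(S\theta)$.

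For well-definedness and the finite case, fix $\theta$ with $|\theta|=l\geq a_{i}$, so $e(\theta)\geq l\geq a_{i}\geq 1$. By the first two facts, $\alpha^{j}\pi(\theta)=(s^{j}E_{\theta}s^{-j})T_{e(\theta)-j}$ for $0\leq j\leq a_{i}$, and since $e(\theta)-j\geq e(\theta)-a_{i}+1\geq 1$ for $j\leq a_{i}-1$, each of $\pi(\theta),\dots,\alpha^{a_{i}-1}\pi(\theta)$ is an isometric copy of some $T_{m}\in\mathbb{T}^{\prime\prime}$; hence $\alpha^{a_{i}}$ is legitimately applicable and $S_{i}\pi(\theta)=f_{i}s^{-a_{i}}(s^{a_{i}}E_{\theta}s^{-a_{i}})T_{e(\theta)-a_{i}}$ is an isometric copy of $T_{e(\theta)-a_{i}}$, so it lies in $\mathbb{T}^{\prime}$. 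Now suppose moreover $i=\theta_{1}$. Using $E_{\theta}=f_{-\theta}s^{e(\theta)}$, $f_{-\theta}=f_{i}^{-1}f_{-S\theta}$, and $e(\theta)-a_{i}=e(S\theta)$,
\[
\alpha^{a_{i}}\pi(\theta)=\big(s^{a_{i}}f_{-\theta}s^{e(S\theta)}\big)T_{e(S\theta)}=\big(s^{a_{i}}f_{i}^{-1}\big)\big(f_{-S\theta}s^{e(S\theta)}T_{e(S\theta)}\big)=\big(s^{a_{i}}f_{i}^{-1}\big)\pi(S\theta),
\]
where $f_{-S\theta}s^{e(S\theta)}T_{e(S\theta)}=\pi(S\theta)$ by Lemma~\ref{lemma02} (valid also when $S\theta=\varnothing$, since $\pi(\varnothing)=T_{0}$). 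Applying $f_{i}s^{-a_{i}}$ and cancelling, $S_{i}\pi(\theta)=f_{i}s^{-a_{i}}s^{a_{i}}f_{i}^{-1}\pi(S\theta)=\pi(S\theta)$.

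For $\theta\in[N]^{\infty}$ write $\pi(\theta)=\bigcup_{k}\pi(\theta|k)$, a nested union of tilings in $\mathbb{T}^{\prime\prime}$ (each $\pi(\theta|k)$ is an isometric copy of $T_{e(\theta|k)}$); hence $\pi(\theta)\in\mathbb{T}^{\prime\prime}$, since any small tile of $\pi(\theta)$, together with its set of partners, already lies in some $\pi(\theta|k)$. The key claim is that $\alpha$ commutes with this nested union, and likewise with $\bigcup_{k}\alpha^{j}\pi(\theta|k)$ for each $j<a_{i}$; granting this, $\alpha^{a_{i}}\pi(\theta)=\bigcup_{k}\alpha^{a_{i}}\pi(\theta|k)$ (again a nested union in $\mathbb{T}^{\prime\prime}$), so $\alpha^{a_{i}}$ is applicable to $\pi(\theta)$ as well. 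Since $f_{i}$ and $s^{-a_{i}}$ act tile by tile, and the finite case gives $S_{i}\pi(\theta|k)=\pi(S(\theta|k))=\pi((S\theta)|(k-1))$ for $k\geq a_{i}$ (here $\theta_{1}=i$),
\[
S_{i}\pi(\theta)=\bigcup_{k\geq a_{i}}S_{i}\pi(\theta|k)=\bigcup_{k\geq a_{i}}\pi\big((S\theta)|(k-1)\big)=\pi(S\theta),
\]
the last equality because $\{\pi((S\theta)|j)\}_{j}$ increases to $\pi(S\theta)$.

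The step that requires real work — and the one I expect to be the main obstacle — is the claim that $\alpha$ commutes with the nested union $\pi(\theta|k)\uparrow\pi(\theta)$; equivalently, that no tile changes its ``partner status'' as $\pi(\theta|k)$ grows, i.e.\ every copy of $T_{0}$ contained in $\pi(\theta|(k+1))$ either lies entirely inside $\pi(\theta|k)$ or shares no full tile with it. I would prove this by first identifying, via Lemmas~\ref{branchlem} and \ref{lemstruc2}, the copies of $T_{0}$ inside $T_{m}$ with the groups $s^{-m}f_{\sigma}(T_{0})$ over the words $\sigma$ with $e(\sigma)=m$; transporting through $E_{\theta|(k+1)}$ shows such a copy inside $\pi(\theta|(k+1))$ has support $f_{-\theta|k}\big(f_{\theta_{k+1}}^{-1}f_{\sigma}(A)\big)$. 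If $\sigma_{1}=\theta_{k+1}$ this support lies in $A_{-\theta|k}=f_{-\theta|k}(A)$, and since the tiles of $\pi(\theta|(k+1))$ lying in $A_{-\theta|k}$ are exactly those of $\pi(\theta|k)$, the whole copy lies in $\pi(\theta|k)$; if $\sigma_{1}\neq\theta_{k+1}$ then $f_{\sigma}(A)\cap f_{\theta_{k+1}}(A)\subseteq f_{\sigma_{1}}(A)\cap f_{\theta_{k+1}}(A)\subseteq\mathcal{O}$, which by the OSC has Hausdorff dimension strictly less than $D_{H}$, so the copy meets $A_{-\theta|k}$ in a set of dimension $<D_{H}$ and hence contains no full tile of $\pi(\theta|k)$. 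The identical analysis applies verbatim to each sequence $\{\alpha^{j}\pi(\theta|k)\}_{k}$, $0\leq j<a_{i}$, because $\alpha^{j}\pi(\theta|k)=(s^{j}E_{\theta|k}s^{-j})T_{e(\theta|k)-j}$ and the comparison is again between $f_{\theta_{k+1}}^{-1}f_{\tau}(A)$ and $A$, merely scaled by $s^{j}$; this delivers the claim used in the previous paragraph. Granting that bookkeeping, $\alpha^{j}\pi(\theta|k)\subseteq\alpha^{j}\pi(\theta|(k+1))$ and $\bigcup_{k}\alpha^{j}\pi(\theta|k)=\alpha^{j}\pi(\theta)$ follow by comparing tiles directly, and everything else is formal manipulation of the identities in Lemmas~\ref{lemma02} and \ref{alphaTlem}.
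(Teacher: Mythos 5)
Your proposal is correct (at the same level of rigor the paper itself adopts concerning rigidity and the uniqueness of partner groupings), but it organizes the argument differently from the paper. The paper proves the identity $S_{\theta_1}\pi(\theta)=\pi(S\theta)$ for infinite $\theta$ by decomposing $\pi(\theta)$ into a countable \emph{disjoint} union, namely an initial piece $E_{\theta|K}T_{e(\theta|K)}$ together with the difference tilings $E_{\theta|k+1}T_{e(\theta|k+1)}\setminus E_{\theta|k}T_{e(\theta|k)}$, asserting each piece lies in the domain of the relevant power of $\alpha$, pushing $f_{\theta_1}s^{-a_{\theta_1}}\alpha^{a_{\theta_1}}$ through this disjoint union via Lemma \ref{srinterlem}, and reassembling $\pi(S\theta)$; the finite-word case is only said to be ``treated similarly.'' You instead prove the finite case explicitly from Lemma \ref{lemma02}, Lemma \ref{alphaTlem} and the isometry-equivariance $\alpha(EQ)=(sEs^{-1})\alpha(Q)$ (which the paper uses implicitly when it moves $E_{\theta|k}$ past $s^{-a_{\theta_1}}\alpha^{a_{\theta_1}}$), and then pass to the limit along the \emph{nested} union $\pi(\theta|k)\uparrow\pi(\theta)$, justifying the exchange of $\alpha$ with this increasing union by the relative-address computation showing that a set of partners in $\pi(\theta|k+1)$ with $\sigma_1=\theta_{k+1}$ lies wholly in $\pi(\theta|k)$, while one with $\sigma_1\neq\theta_{k+1}$ meets $A_{-\theta|k}$ only in an image of the overlap set and so contains no tile of $\pi(\theta|k)$. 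What your route buys is that it makes explicit precisely the compatibility the paper leaves implicit (that amalgamation respects the decomposition, i.e.\ that the pieces really are in the domain of $\alpha^{a_{\theta_1}}$ and no partner group straddles the boundary); the cost is the longer bookkeeping, which the paper avoids by citing Lemma \ref{srinterlem} for disjoint unions wholesale. Your reduction $e(\theta)\geq l\geq a_i$, the cancellation $f_is^{-a_i}s^{a_i}f_i^{-1}=id$, and the identification $S(\theta|k)=(S\theta)|(k-1)$ are all correct, and your reliance on the unambiguity of partner groupings under rigidity is exactly the reliance already present in Lemmas \ref{inverselemma} and \ref{alphaTlem}.
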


\begin{proof}
We only consider the case $\theta\in\lbrack N]^{\infty}$. The case $\theta
\in\lbrack N]^{l}$ is treated similarly. A detailed calculation, outlined
next, is needed. The key idea is that $\pi\left(  \theta\right)  $ is broken
up into a countable union of disjoint tilings, each of which belongs to the
domain of $\alpha^{k}$ for all $k\leq K$ for any $K\in\mathbb{N}$. For all
$K\in\mathbb{N}$ we have:%
\[
\pi\left(  \theta\right)  =E_{\theta|K}T_{e\left(  \theta|K\right)  }%
{\textstyle\bigsqcup}
{\textstyle\bigsqcup_{k=K}^{\infty}}
E_{\theta|k+1}T_{e\left(  \theta|k+1\right)  }\backslash E_{\theta
|k}T_{e\left(  \theta|k\right)  }\text{.}%
\]
The tilings on the r.h.s. are indeed disjoint, and each set belongs to the
domain of $\alpha^{e\left(  \theta|K\right)  }$, so we can use Lemma
\ref{srinterlem} applied countably many times to yield%
\[
S_{\theta_{1}}\pi\left(  \theta\right)  =S_{\theta_{1}}\left(  E_{\theta
|K}T_{e\left(  \theta|K\right)  }\right)
{\textstyle\bigsqcup_{k=K}^{\infty}}
S_{\theta_{1}}\left(  E_{\theta|k+1}T_{e\left(  \theta|k+1\right)  }\right)
\backslash S_{\theta_{1}}\left(  E_{\theta|k}T_{e\left(  \theta|k\right)
}\right)  \text{.}%
\]
Evaluating, we obtain successively
\begin{align*}
S_{\theta_{1}}\pi\left(  \theta\right)   &  =f_{\theta_{1}}s^{-a_{\theta_{1}}%
}\alpha^{a_{\theta_{1}}}\left(  E_{\theta|K}T_{e\left(  \theta|K\right)
}\right)
{\textstyle\bigsqcup_{k=K}^{\infty}}
f_{\theta_{1}}s^{-a_{\theta_{1}}}\alpha^{a_{\theta_{1}}}\left(  E_{\theta
|k+1}T_{e\left(  \theta|k+1\right)  }\right)  \backslash f_{\theta_{1}%
}s^{-a_{\theta_{1}}}\alpha^{a_{\theta_{1}}}\left(  E_{\theta|k}T_{e\left(
\theta|k\right)  }\right)  ,\\
S_{\theta_{1}}\pi\left(  \theta\right)   &  =f_{\theta_{1}}E_{\theta
|K}s^{-a_{\theta_{1}}}\alpha^{a_{\theta_{1}}}T_{e\left(  \theta|K\right)  }%
{\textstyle\bigsqcup_{k=K}^{\infty}}
f_{\theta_{1}}E_{\theta|k+1}s^{-a_{\theta_{1}}}\alpha^{a_{\theta_{1}}%
}T_{e\left(  \theta|k+1\right)  }\backslash f_{\theta_{1}}E_{\theta
|k+1}s^{-a_{\theta_{1}}}\alpha^{a_{\theta_{1}}}T_{e\left(  \theta|k\right)
},\\
S_{\theta_{1}}\pi\left(  \theta\right)   &  =f_{\theta_{1}}E_{\theta
|K}s^{-a_{\theta_{1}}}T_{e\left(  S\theta|K-1\right)  }%
{\textstyle\bigsqcup_{k=K}^{\infty}}
f_{\theta_{1}}E_{\theta|k+1}s^{-a_{\theta_{1}}}T_{e\left(  S\theta|k\right)
}\backslash f_{\theta_{1}}E_{\theta|k}s^{-a_{\theta_{1}}}T_{e\left(
S\theta|k-1\right)  },\\
S_{\theta_{1}}\pi\left(  \theta\right)   &  =E_{S\theta|\left(  K-1\right)
}T_{e\left(  S\theta|K-1\right)  }%
{\textstyle\bigsqcup_{k=K}^{\infty}}
E_{S\theta|k}T_{e\left(  S\theta|k-1\right)  }\backslash E_{S\theta
|k-1}T_{e\left(  S\theta|k-1\right)  }=\pi\left(  S\theta\right)  .
\end{align*}

\end{proof}

\begin{flushleft}
\textbf{Theorem \ref{1to1thm}.} If $\pi(i)\cap\pi(j)$ does not tile $\left(
support\text{ }\pi(i)\right)  \cap\left(  support\text{ }\pi(j)\right)  $ for
all $i\neq j$, then $\pi:[N]^{\ast}\cup\lbrack N]^{\infty}\rightarrow
\mathbb{T}$ is one-to-one.
\end{flushleft}

\begin{proof}
The map $\pi$ is one-to-one on $[N]^{\ast}$ by Lemma \ref{noninjlem}, so we
restrict attention to points in $[N]^{\infty}$. If $\theta$ and $\theta
^{\prime}$ are such that $\theta_{1}=i$ and $\theta_{1}^{\prime}=j$, then the
result is immediate because $\pi(\theta)$ contains $\pi(i)$ and $\pi
(\theta^{\prime})$ contains $\pi(j)$. If $\theta$ and $\theta^{\prime}$ agree
through their first $K$ terms with $K\geq1$ and $\theta_{K+1}\neq$
$\theta_{K+1}^{\prime}$, then $\pi(S^{K}\theta)\neq\pi(S^{K}\theta^{\prime})$.
Now apply $S_{\theta_{1}}^{-1}S_{\theta_{2}}^{-1}...S_{\theta_{K}}^{-1}$ to
obtain $\pi(\theta)\neq\pi(\theta^{\prime})$. (We can do this last step
because $S_{i}^{-1}=\left(  f_{i}s^{-a_{i}}\alpha^{a_{i}}\right)  ^{-1}%
=\alpha^{-a_{i}}s^{a_{i}}f_{i}^{-1}$ has as its domain all of $\mathbb{T}%
^{\prime}$ and maps $\mathbb{T}^{\prime}$ into $\mathbb{T}^{\prime}$.)
\end{proof}

\section{\label{exsec}Examples}

\subsection{Golden b tilings}

A \textit{golden b} $G\subset{\mathbb{R}}^{2}$ is illustrated in Figure
\ref{golden-01}. This hexagon is the only rectilinear polygon that can be
tiled by a pair of differently scaled copies of itself \cite{S, Sch}.
Throughout this subsection the IFS is
\[
\mathcal{F}=\{\mathbb{R}^{2};f_{1},f_{2}\}
\]
where
\[
f_{1}(x,y)=%
\begin{pmatrix}
0 & s\\
-s & 0
\end{pmatrix}%
\begin{pmatrix}
x\\
y
\end{pmatrix}
+%
\begin{pmatrix}
0\\
s
\end{pmatrix}
,\quad f_{2}(x,y)=%
\begin{pmatrix}
-s^{2} & 0\\
0 & s^{2}%
\end{pmatrix}%
\begin{pmatrix}
x\\
y
\end{pmatrix}
+%
\begin{pmatrix}
1\\
0
\end{pmatrix}
,
\]
where the scaling ratios $s$ and $s^{2}$ obey $s^{4}+s^{2}=1$, which tells us
that $s^{-2}=\alpha^{-2}$ is the golden mean. The attractor of $\mathcal{F}$
is $A=G$. It is the union of two prototiles $f_{1}(G)$ and $f_{2}(G)$. Copies
of these prototiles are labelled $L$ and $S$. In this example, note that
$e(\theta)=\theta_{1}+\theta_{2}+\cdots+\theta_{\left\vert \theta\right\vert
}$ for $\theta\in\lbrack2]^{\ast}$.

\begin{figure}[h]
\centering
\includegraphics[width=5cm, keepaspectratio]{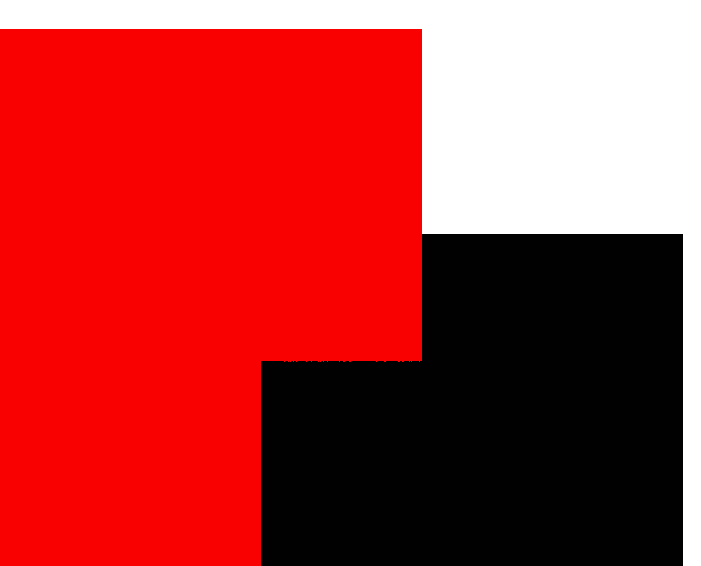}\caption{A golden b
is a union of two tiles, a small one and its partner, a large one. The
vertices of this golden b are located at $(0,0)$ $(1,0)$ $(1,\alpha^{3})$
$(\alpha^{2},\alpha^{3})$ $(\alpha^{2},\alpha)$ $(0,\alpha)$ in
counterclockwise order, starting at the lower left corner, where $\alpha^{-2}$
is the golden mean. This picture also represents a tiling $T_{0}%
=\pi(\varnothing)$. }%
\label{golden-01}%
\end{figure}

The figures in this section illustrate some earlier concepts in the context of
the golden b. Using some of these figures, it is easy to check that
$\mathcal{F}$ is strongly rigid, so the tilings $\pi(\theta)$ have all of the
properties ascribed to them by the theorems in the earlier sections.

\begin{figure}[ptb]
\centering
\includegraphics[width=8cm, keepaspectratio]{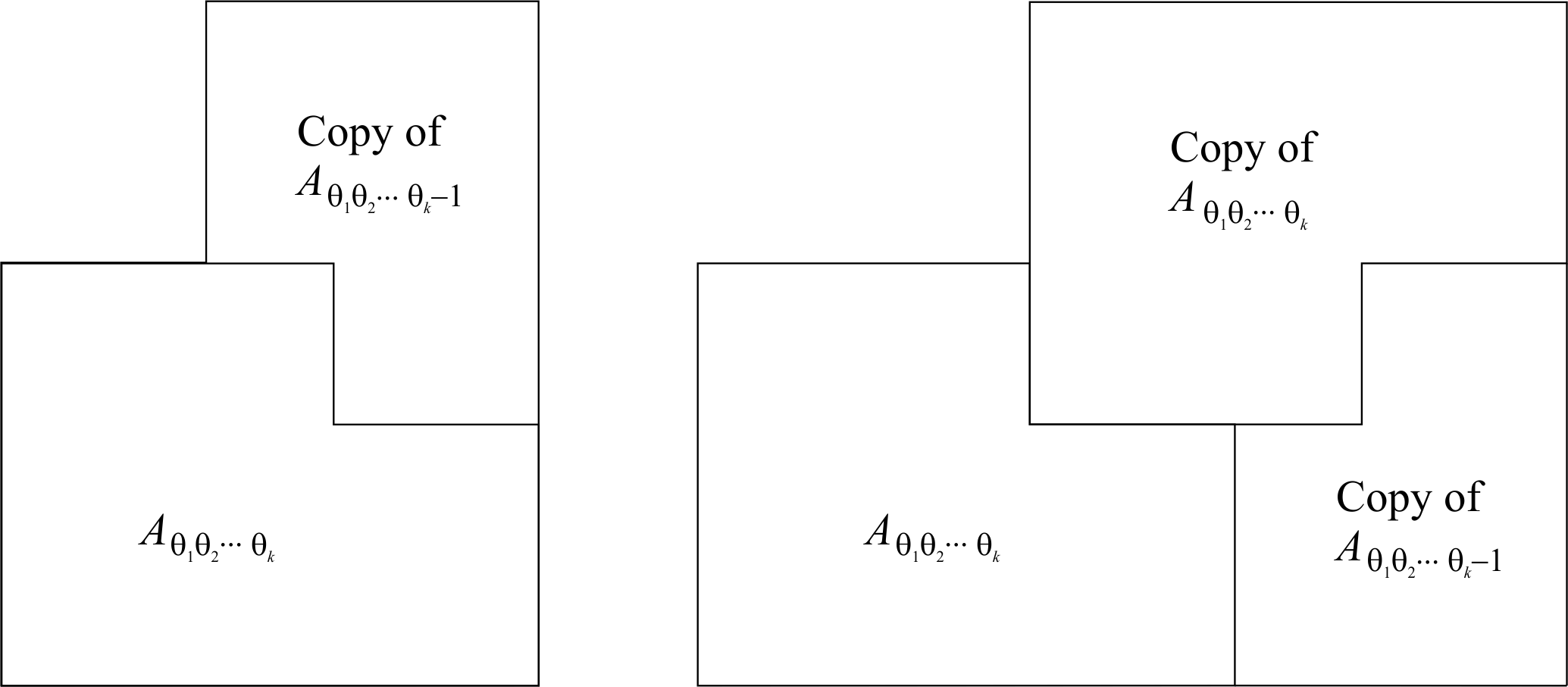} \caption{Structures of
$A_{\theta_{1}\theta_{2}\cdots\theta_{k}1}$ and $A_{\theta_{1}\theta_{2}%
\cdots\theta_{k}2}$ relative to $A_{\theta_{1}\theta_{2}\cdots\theta_{k}}$.}%
\label{construction}%
\end{figure}

\begin{figure}[ptb]
\centering
\vskip 7mm \includegraphics[width=8cm, keepaspectratio]{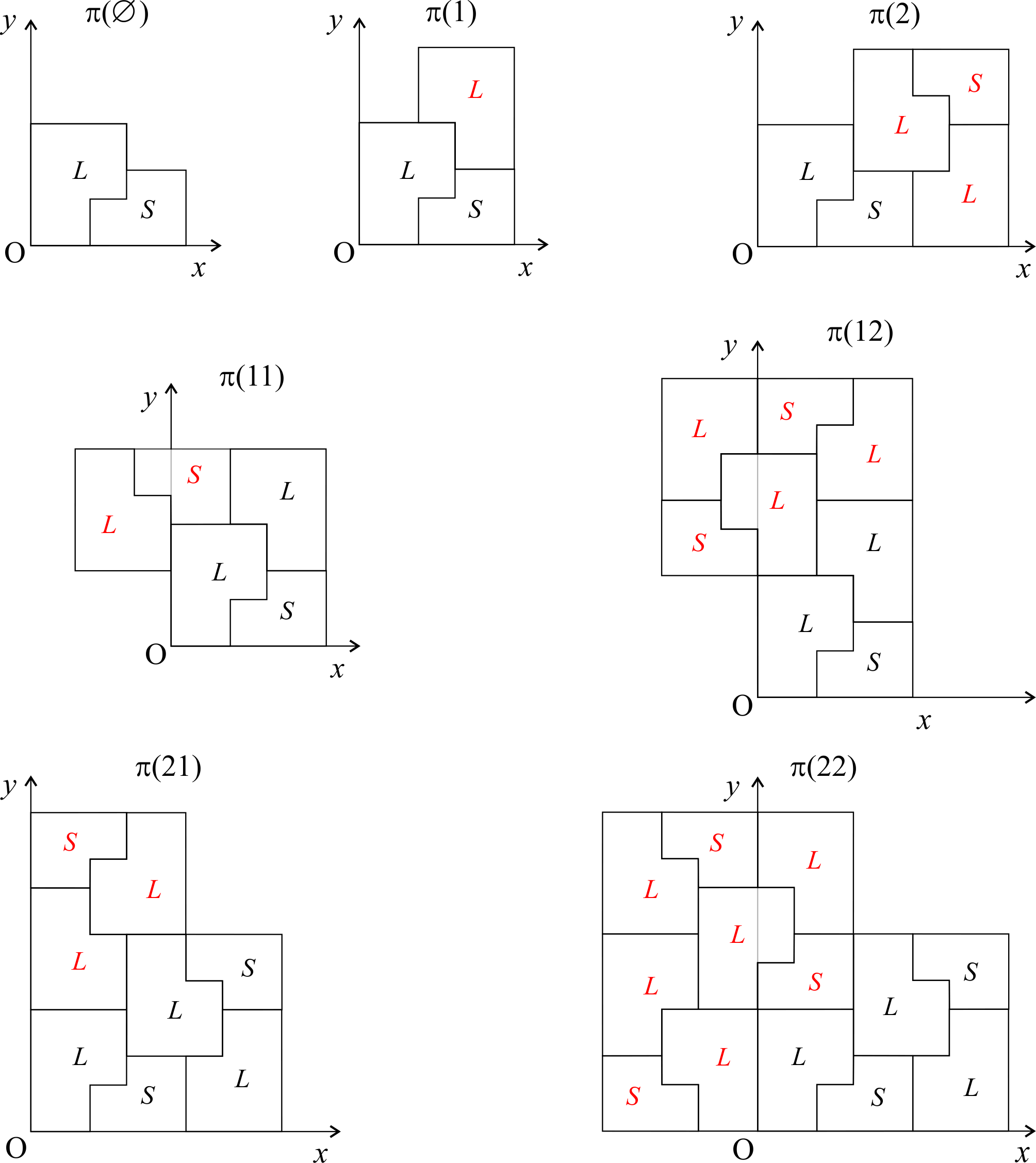}
\caption{Some of the sets $A_{\theta_{1}\theta_{2}\theta_{3}..\theta_{k}}$ and
the corresponding tilings $\pi(\theta_{1}\theta_{2}\theta_{3}..\theta_{k})$.
The recursive organization is such that $\pi(\varnothing)\subset\pi(\theta
_{1})\subset\pi(\theta_{1}\theta_{2})\subset\cdots$ regardless of the choice
$\theta_{1}\theta_{2}\theta_{3}..\in\{1,2\}^{\infty}$. }%
\label{img_1015x}%
\end{figure}

\begin{figure}[ptb]
\centering
\includegraphics[width=8cm, keepaspectratio
]{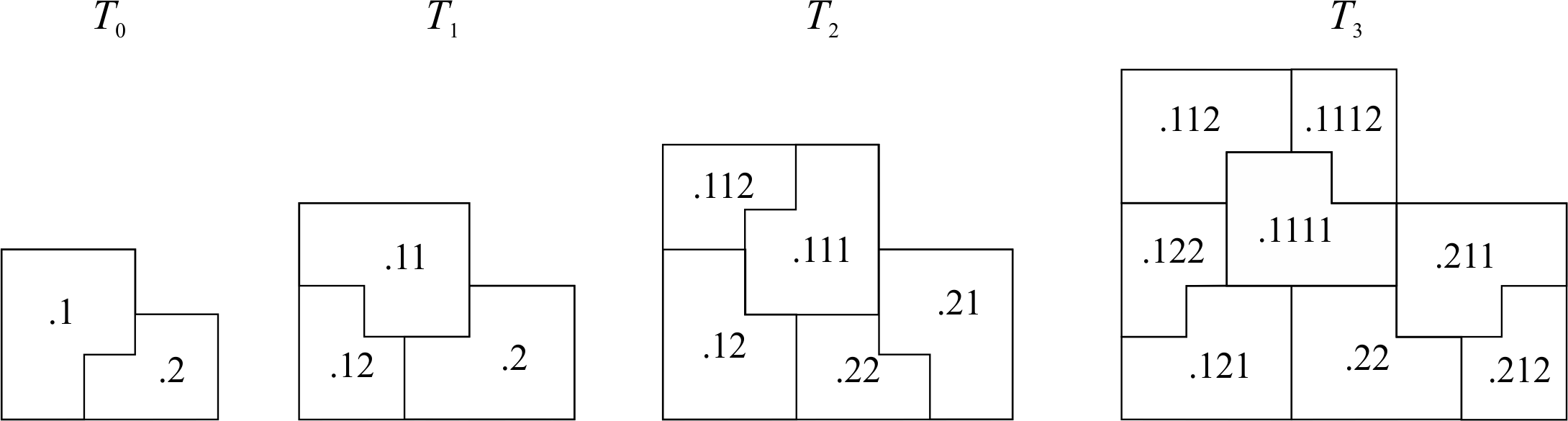}\caption{Relative addresses, the addresses of the tiles that
comprise the tilings $T_{0},T_{1},T_{2},T_{3}$ of $A_{0},A_{1},A_{2}%
,A_{3}\text{.}$}%
\label{l-maps}%
\end{figure}

\begin{figure}[ptb]
\centering
\vskip 9mm \includegraphics[width=8cm, keepaspectratio]{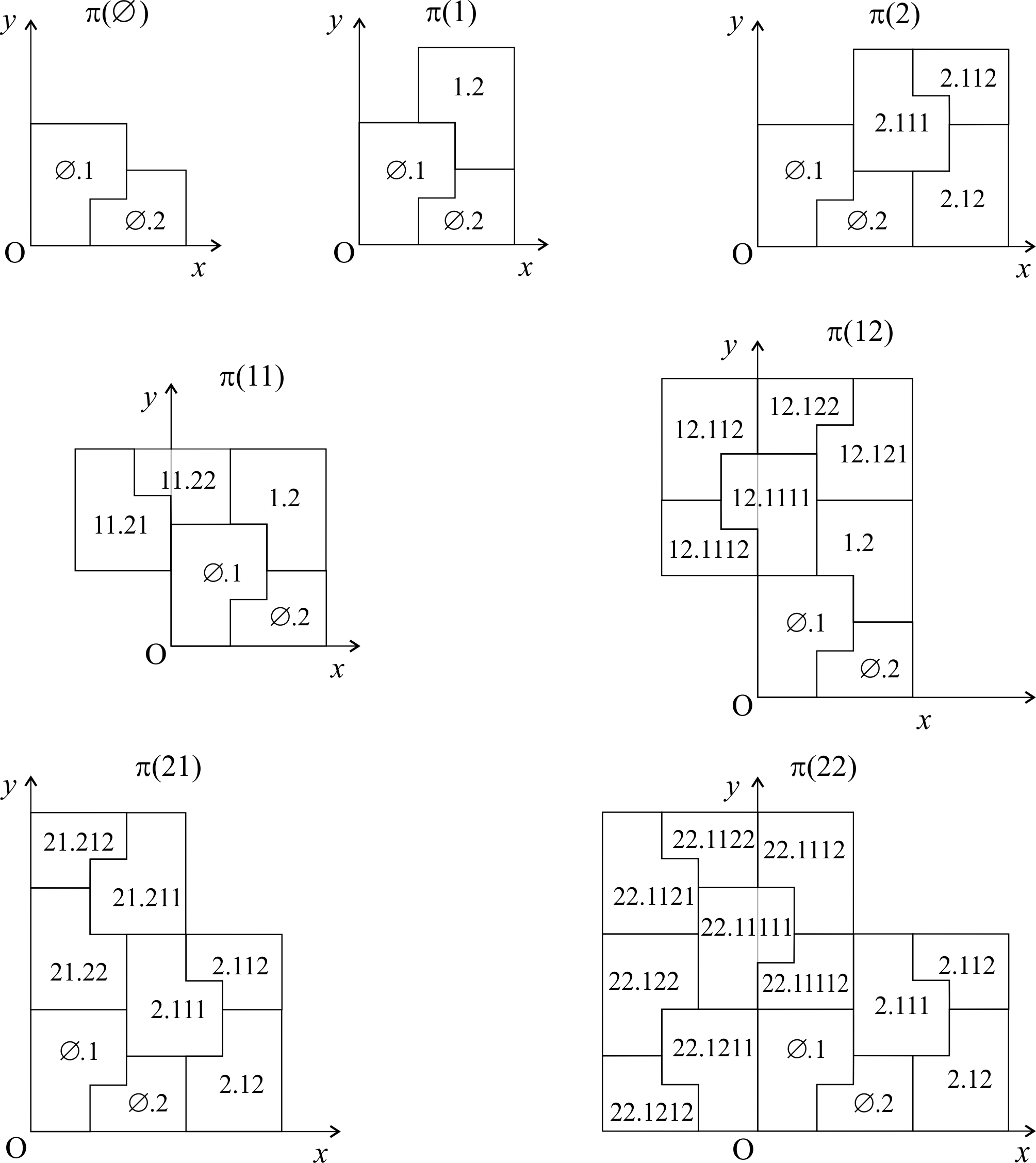}
\caption{Absolute addresses associated with the golden b.}%
\label{absolute}%
\end{figure}

\begin{figure}[ptb]
\centering
\vskip 9mm \includegraphics[width=8cm, keepaspectratio]{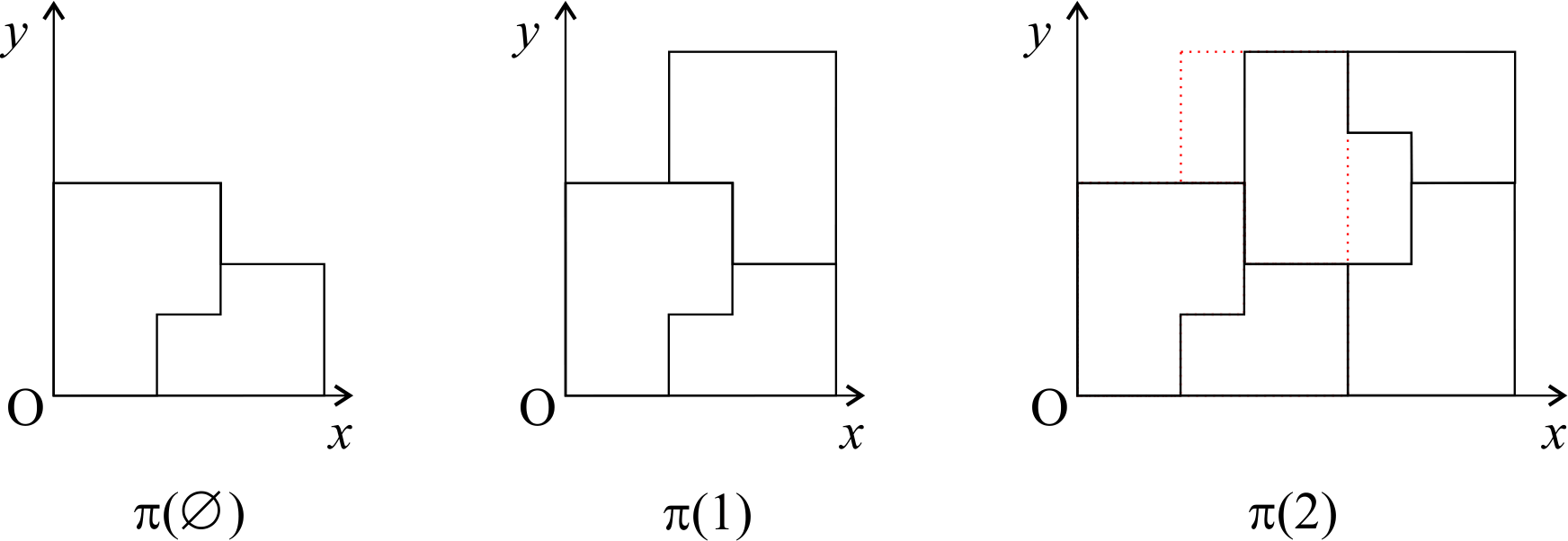}
\caption{The boundaries of the tilings $\pi(\emptyset)$, $\pi(1)$, $\pi(2)$,
with the parts of the boundaries of the tiles in $\pi(1)$ that are not parts
of the boundaries of tiles in $\pi(2)$ superimposed in red on the rightmost
image.}%
\label{img1013}%
\end{figure}

The relationships between $A_{\theta_{1}\theta_{2}\cdots\theta_{k}1}$ and
$A_{\theta_{1}\theta_{2}\cdots\theta_{k}2}$ relative to $A_{\theta_{1}%
\theta_{2}\cdots\theta_{k}}$ are illustrated in Figure \ref{construction}.
Figure \ref{img_1015x} illustrates some of the sets $A_{\theta_{1}\theta
_{2}\theta_{3}..\theta_{k}}$ and the corresponding tilings $\pi(\theta
_{1}\theta_{2}\theta_{3}..\theta_{k})$.

In Section \ref{realabsec}, procedures were described by which the relative
addresses of tiles in $T(\theta|k)$ and the absolute addresses of tiles in
$\pi(\theta|k)$ may be calculated recursively. Relative addresses for some
golden b tilings are illustrated in Figure \ref{l-maps}. Figure \ref{absolute}
illustrates absolute addresses for some golden b tilings.

The map $\pi:[2]^{\ast}\cup\lbrack2]^{\infty}\rightarrow\mathbb{T}$ is 1-1 by
Theorem \ref{1to1thm}, because $\pi(1)\cup\pi(2)$ does not tile the
interesection of the supports of $\pi(1)$ and $\pi(2),$ as illustrated in
Figure \ref{img1013}.

We note that $\pi(\overline{12})$ and $\pi(\overline{21})$ are aperiodic
tilings of the upper right quadrant of $\mathbb{R}^{2}$.

\subsection{Fractal tilings with non-integer dimension}

The left hand image in Figure \ref{gold8map}, shows the attractor of the IFS
represented by the different coloured regions, there being 8 maps, and
provides an example of a strongly rigid IFS. The right hand image represents
the attractor of the same IFS minus one of the maps, also strongly rigid, but
in this case the dimensions of the tiles is less than two and greater than
one. Figure \ref{sidebyside} (in Section~\ref{sec:intro}) illustrates a part
of a fractal blow up of a different but related 7 map IFS, also strongly
rigid, and the corresponding tiling.%

\begin{figure}[ptb]%
\centering
\includegraphics[
height=2.5728in,
width=5.2477in
]%
{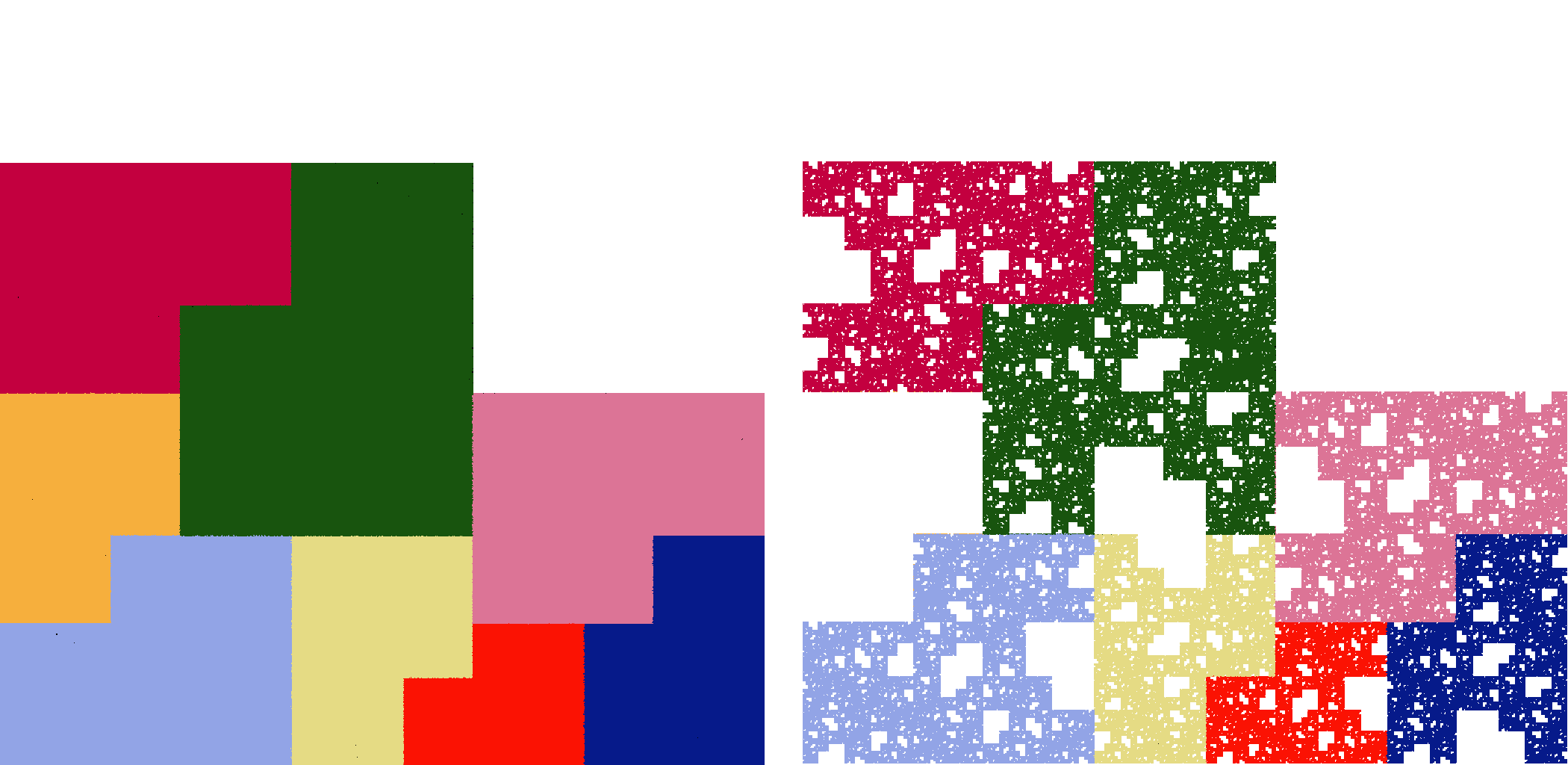}%
\caption{See text.}%
\label{gold8map}%
\end{figure}

Figure \ref{beetile} left shows a tiling associated with the IFS $\mathcal{F}$
represented on the left in Figure \ref{gold8map}, while the tiling on the
right is another example of a fractal tiling, obtained by dropping one of the
maps of $\mathcal{F}$.%
\begin{figure}[ptb]%
\centering
\includegraphics[
height=2.5728in,
width=5.2468in
]%
{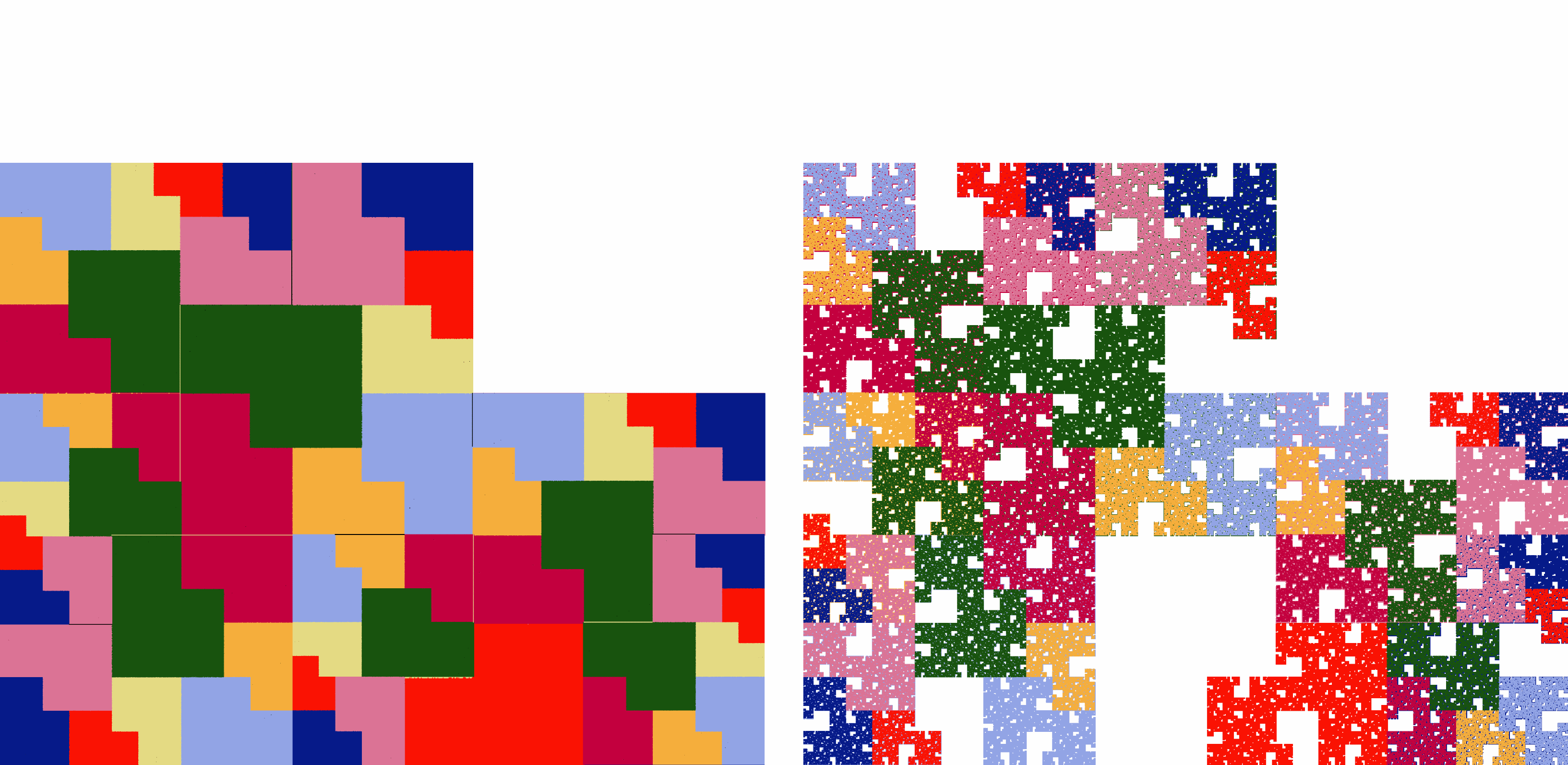}%
\caption{See text.}%
\label{beetile}%
\end{figure}

\subsection{Tilings derived from Cantor sets}

Our results apply to the case where $\mathcal{F}=\{\mathbb{R}^{M}%
;f_{i}(x)=s^{a_{i}}O_{i}+q_{i},i\in\lbrack N]\}$ where $\{O_{i},q_{i}%
:i\in\lbrack N]\}$ is fixed in a general position, the $a_{i}$s are positive
integers, and $s$ is chosen small enough to ensure that the attractor is a
Cantor set. In this situation the set of overlap is empty and it is to be
expected that $\mathcal{F}$ is strongly rigid, in which case all tilings (by a
finite set of prototiles, each a Cantor set) will be non-periodic. We can then
take $s$ to be the supremum of value such that the set of overlap is nonempty,
to yield interesting ``just touching" tilings.

\end{document}